\newtheorem{theorem}{Theorem}
\theoremstyle{plain}
\newtheorem{axiom}{Axiom}
\newtheorem{conjecture}{Conjecture}
\newtheorem{corollary}{Corollary}
\newtheorem{definition}{Definition}
\newtheorem{example}{Example}
\newtheorem{exercise}{Exercise}
\newtheorem{lemma}{Lemma}
\newtheorem{proposition}{Proposition}
\newtheorem{remark}{Remark}
\numberwithin{equation}{section}
\numberwithin{theorem}{section}
\numberwithin{algorithm}{section}
\numberwithin{axiom}{section}
\numberwithin{case}{section}
\numberwithin{claim}{section}
\numberwithin{conclusion}{section}
\numberwithin{condition}{section}
\numberwithin{conjecture}{section}
\numberwithin{corollary}{section}
\numberwithin{criterion}{section}
\numberwithin{definition}{section}
\numberwithin{example}{section}
\numberwithin{exercise}{section}
\numberwithin{lemma}{section}
\numberwithin{notation}{section}
\numberwithin{problem}{section}
\numberwithin{proposition}{section}
\numberwithin{remark}{section}
\numberwithin{solution}{section}
\chardef\@x10\chardef\@xv60
\def\tcitime{
\def\@time{%
  \@minute\time\@hour\@minute\divide\@hour\@xv
  \ifnum\@hour<\@x 0\fi\the\@hour:%
  \multiply\@hour\@xv\advance\@minute-\@hour
  \ifnum\@minute<\@x 0\fi\the\@minute
  }}%
\def\QCTOpt[#1]#2{%
  \def\QCTOptB{#1}
  \def\QCTOptA{#2}
}
\def\QCTNOpt#1{%
  \def\QCTOptA{#1}
  \let\QCTOptB\empty
}
\def\Qct{%
  \@ifnextchar[{%
    \QCTOpt}{\QCTNOpt}
}
\def\QCBOpt[#1]#2{%
  \def\QCBOptB{#1}
  \def\QCBOptA{#2}
}
\def\QCBNOpt#1{%
  \def\QCBOptA{#1}
  \let\QCBOptB\empty
}
\def\Qcb{%
  \@ifnextchar[{%
    \QCBOpt}{\QCBNOpt}
}
\def\PrepCapArgs{%
  \ifx\QCBOptA\empty
    \ifx\QCTOptA\empty
      {}%
    \else
      \ifx\QCTOptB\empty
        {\QCTOptA}%
      \else
        [\QCTOptB]{\QCTOptA}%
      \fi
    \fi
  \else
    \ifx\QCBOptA\empty
      {}%
    \else
      \ifx\QCBOptB\empty
        {\QCBOptA}%
      \else
        [\QCBOptB]{\QCBOptA}%
      \fi
    \fi
  \fi
}
\def\GRAPHICSPS#1{%
 \ifcase\GRAPHICSTYPE%\GRAPHICSTYPE=0
   \special{ps: #1}%
 \or%\GRAPHICSTYPE=1
   \special{language "PS", include "#1"}%
%%%\or%\GRAPHICSTYPE=2
%%%  #1%
 \fi
}%
\def\graffile#1#2#3#4{%
%%% \ifnum\GRAPHICSTYPE=\tw@
%%%  %Following if using psfig
%%%  \@ifundefined{psfig}{\input psfig.tex}{}%
%%%  \psfig{file=#1, height=#3, width=#2}%
%%% \else
  %Following for all others
  % JCS - added BOXTHEFRAME, see below
    \bgroup
    \leavevmode
    \@ifundefined{bbl@deactivate}{\def~{\string~}}{\activesoff}
    \raise -#4 \BOXTHEFRAME{%
        \hbox to #2{\raise #3\hbox to #2{\null #1\hfil}}}%
    \egroup
}%
\def\draftbox#1#2#3#4{%
 \leavevmode\raise -#4 \hbox{%
  \frame{\rlap{\protect\tiny #1}\hbox to #2%
   {\vrule height#3 width\z@ depth\z@\hfil}%
  }%
 }%
}%
\newif\ifwasdraft
\def\GRAPHIC#1#2#3#4#5{%
 \ifnum\draft=\@ne\draftbox{#2}{#3}{#4}{#5}%
  \else\graffile{#1}{#3}{#4}{#5}%
  \fi
 }%
\def\addtoLaTeXparams#1{%
    \edef\LaTeXparams{\LaTeXparams #1}}%
\newif\ifBoxFrame \BoxFramefalse
\newif\ifOverFrame \OverFramefalse
\newif\ifUnderFrame \UnderFramefalse
\def\BOXTHEFRAME#1{%
   \hbox{%
      \ifBoxFrame
         \frame{#1}%
      \else
         {#1}%
      \fi
   }%
}
\def\doFRAMEparams#1{\BoxFramefalse\OverFramefalse\UnderFramefalse\readFRAMEparams#1\end}%
\def\readFRAMEparams#1{%
 \ifx#1\end%
  \let\next=\relax
  \else
  \ifx#1i\dispkind=\z@\fi
  \ifx#1d\dispkind=\@ne\fi
  \ifx#1f\dispkind=\tw@\fi
  \ifx#1t\addtoLaTeXparams{t}\fi
  \ifx#1b\addtoLaTeXparams{b}\fi
  \ifx#1p\addtoLaTeXparams{p}\fi
  \ifx#1h\addtoLaTeXparams{h}\fi
  \ifx#1X\BoxFrametrue\fi
  \ifx#1O\OverFrametrue\fi
  \ifx#1U\UnderFrametrue\fi
  \ifx#1w
    \ifnum\draft=1\wasdrafttrue\else\wasdraftfalse\fi
    \draft=\@ne
  \fi
  \let\next=\readFRAMEparams
  \fi
 \next
 }%
\def\IFRAME#1#2#3#4#5#6{%
      \bgroup
      \let\QCTOptA\empty
      \let\QCTOptB\empty
      \let\QCBOptA\empty
      \let\QCBOptB\empty
      #6%
      \parindent=0pt%
      \leftskip=0pt
      \rightskip=0pt
      \setbox0 = \hbox{\QCBOptA}%
      \@tempdima = #1\relax
      \ifOverFrame
          % Do this later
          \typeout{This is not implemented yet}%
          \show\HELP
      \else
         \ifdim\wd0>\@tempdima
            \advance\@tempdima by \@tempdima
            \ifdim\wd0 >\@tempdima
               \textwidth=\@tempdima
               \setbox1 =\vbox{%
                  \noindent\hbox to \@tempdima{\hfill\GRAPHIC{#5}{#4}{#1}{#2}{#3}\hfill}\\%
                  \noindent\hbox to \@tempdima{\parbox[b]{\@tempdima}{\QCBOptA}}%
               }%
               \wd1=\@tempdima
            \else
               \textwidth=\wd0
               \setbox1 =\vbox{%
                 \noindent\hbox to \wd0{\hfill\GRAPHIC{#5}{#4}{#1}{#2}{#3}\hfill}\\%
                 \noindent\hbox{\QCBOptA}%
               }%
               \wd1=\wd0
            \fi
         \else
            %\show\BBB
            \ifdim\wd0>0pt
              \hsize=\@tempdima
              \setbox1 =\vbox{%
                \unskip\GRAPHIC{#5}{#4}{#1}{#2}{0pt}%
                \break
                \unskip\hbox to \@tempdima{\hfill \QCBOptA\hfill}%
              }%
              \wd1=\@tempdima
           \else
              \hsize=\@tempdima
              \setbox1 =\vbox{%
                \unskip\GRAPHIC{#5}{#4}{#1}{#2}{0pt}%
              }%
              \wd1=\@tempdima
           \fi
         \fi
         \@tempdimb=\ht1
         \advance\@tempdimb by \dp1
         \advance\@tempdimb by -#2%
         \advance\@tempdimb by #3%
         \leavevmode
         \raise -\@tempdimb \hbox{\box1}%
      \fi
      \egroup%
}%
\def\DFRAME#1#2#3#4#5{%
 \begin{center}
     \let\QCTOptA\empty
     \let\QCTOptB\empty
     \let\QCBOptA\empty
     \let\QCBOptB\empty
     \ifOverFrame 
        #5\QCTOptA\par
     \fi
     \GRAPHIC{#4}{#3}{#1}{#2}{\z@}
     \ifUnderFrame 
        \nobreak\par\nobreak#5\QCBOptA
     \fi
 \end{center}%
 }%
\def\FFRAME#1#2#3#4#5#6#7{%
 \begin{figure}[#1]%
  \let\QCTOptA\empty
  \let\QCTOptB\empty
  \let\QCBOptA\empty
  \let\QCBOptB\empty
  \ifOverFrame
    #4
    \ifx\QCTOptA\empty
    \else
      \ifx\QCTOptB\empty
        \caption{\QCTOptA}%
      \else
        \caption[\QCTOptB]{\QCTOptA}%
      \fi
    \fi
    \ifUnderFrame\else
      \label{#5}%
    \fi
  \else
    \UnderFrametrue%
  \fi
  \begin{center}\GRAPHIC{#7}{#6}{#2}{#3}{\z@}\end{center}%
  \ifUnderFrame
    #4
    \ifx\QCBOptA\empty
      \caption{}%
    \else
      \ifx\QCBOptB\empty
        \caption{\QCBOptA}%
      \else
        \caption[\QCBOptB]{\QCBOptA}%
      \fi
    \fi
    \label{#5}%
  \fi
  \end{figure}%
 }%
\def\makeactives{
  \catcode`\"=\active
  \catcode`\;=\active
  \catcode`\:=\active
  \catcode`\'=\active
  \catcode`\~=\active
}
   \gdef\activesoff{%
      \def"{\string"}
      \def;{\string;}
      \def:{\string:}
      \def'{\string'}
      \def~{\string~}
      %\bbl@deactivate{"}%
      %\bbl@deactivate{;}%
      %\bbl@deactivate{:}%
      %\bbl@deactivate{'}%
    }
\def\FRAME#1#2#3#4#5#6#7#8{%
 \bgroup
 \ifnum\draft=\@ne
   \wasdrafttrue
 \else
   \wasdraftfalse%
 \fi
 \def\LaTeXparams{}%
 \dispkind=\z@
 \def\LaTeXparams{}%
 \doFRAMEparams{#1}%
 \ifnum\dispkind=\z@\IFRAME{#2}{#3}{#4}{#7}{#8}{#5}\else
  \ifnum\dispkind=\@ne\DFRAME{#2}{#3}{#7}{#8}{#5}\else
   \ifnum\dispkind=\tw@
    \edef\@tempa{\noexpand\FFRAME{\LaTeXparams}}%
    \@tempa{#2}{#3}{#5}{#6}{#7}{#8}%
    \fi
   \fi
  \fi
  \ifwasdraft\draft=1\else\draft=0\fi{}%
  \egroup
 }%
\def\TEXUX#1{"texux"}
\def\limfunc#1{\mathop{\rm #1}}%
\def\func#1{\mathop{\rm #1}\nolimits}%
\long\def\QQQ#1#2{%
     \long\expandafter\def\csname#1\endcsname{#2}}%
\long\def\QQA#1#2{}%
\def\QTR#1#2{{\csname#1\endcsname #2}}%(gp) Is this the best?
\def\EXPAND#1[#2]#3{}%
\def\NOEXPAND#1[#2]#3{}%
\def\LaTeXparent#1{}%
\def\ChildStyles#1{}%
\def\ChildDefaults#1{}%
\def\QTagDef#1#2#3{}%
  \providecommand{\UNICODE}[2][]{}
\def\QQfnmark#1{\footnotemark}
 \def\abstract{%
  \if@twocolumn
   \section*{Abstract (Not appropriate in this style!)}%
   \else \small 
   \begin{center}{\bf Abstract\vspace{-.5em}\vspace{\z@}}\end{center}%
   \quotation 
   \fi
  }%
   \def\registered{\relax\ifmmode{}\r@gistered
                    \else$\m@th\r@gistered$\fi}%
 \def\r@gistered{^{\ooalign
  {\hfil\raise.07ex\hbox{$\scriptstyle\rm\text{R}$}\hfil\crcr
  \mathhexbox20D}}}}{}%
\newdimen\theight
\def\Column{%
 \vadjust{\setbox\z@=\hbox{\scriptsize\quad\quad tcol}%
  \theight=\ht\z@\advance\theight by \dp\z@\advance\theight by \lineskip
  \kern -\theight \vbox to \theight{%
   \rightline{\rlap{\box\z@}}%
   \vss
   }%
  }%
 }%
\def\qed{%
 \ifhmode\unskip\nobreak\fi\ifmmode\ifinner\else\hskip5\p@\fi\fi
 \hbox{\hskip5\p@\vrule width4\p@ height6\p@ depth1.5\p@\hskip\p@}%
 }%
\def\miss{\hbox{\vrule height2\p@ width 2\p@ depth\z@}}%
\def\tcol#1{{\baselineskip=6\p@ \vcenter{#1}} \Column}  %
\def\newfmtname{LaTeX2e}
  \DeclareOldFontCommand{\rm}{\normalfont\rmfamily}{\mathrm}
  \DeclareOldFontCommand{\sf}{\normalfont\sffamily}{\mathsf}
  \DeclareOldFontCommand{\tt}{\normalfont\ttfamily}{\mathtt}
  \DeclareOldFontCommand{\bf}{\normalfont\bfseries}{\mathbf}
  \DeclareOldFontCommand{\it}{\normalfont\itshape}{\mathit}
  \DeclareOldFontCommand{\sl}{\normalfont\slshape}{\@nomath\sl}
  \DeclareOldFontCommand{\sc}{\normalfont\scshape}{\@nomath\sc}
\def\alpha{{\Greekmath 010B}}%
\def\beta{{\Greekmath 010C}}%
\def\gamma{{\Greekmath 010D}}%
\def\delta{{\Greekmath 010E}}%
\def\epsilon{{\Greekmath 010F}}%
\def\zeta{{\Greekmath 0110}}%
\def\eta{{\Greekmath 0111}}%
\def\theta{{\Greekmath 0112}}%
\def\iota{{\Greekmath 0113}}%
\def\kappa{{\Greekmath 0114}}%
\def\lambda{{\Greekmath 0115}}%
\def\mu{{\Greekmath 0116}}%
\def\nu{{\Greekmath 0117}}%
\def\xi{{\Greekmath 0118}}%
\def\pi{{\Greekmath 0119}}%
\def\rho{{\Greekmath 011A}}%
\def\sigma{{\Greekmath 011B}}%
\def\tau{{\Greekmath 011C}}%
\def\upsilon{{\Greekmath 011D}}%
\def\phi{{\Greekmath 011E}}%
\def\chi{{\Greekmath 011F}}%
\def\psi{{\Greekmath 0120}}%
\def\omega{{\Greekmath 0121}}%
\def\varepsilon{{\Greekmath 0122}}%
\def\vartheta{{\Greekmath 0123}}%
\def\varpi{{\Greekmath 0124}}%
\def\varrho{{\Greekmath 0125}}%
\def\varsigma{{\Greekmath 0126}}%
\def\varphi{{\Greekmath 0127}}%
\def\nabla{{\Greekmath 0272}}
\def\FindBoldGroup{%
   {\setbox0=\hbox{$\mathbf{x\global\edef\theboldgroup{\the\mathgroup}}$}}%
}
\def\Greekmath#1#2#3#4{%
    \if@compatibility
        \ifnum\mathgroup=\symbold
           \mathchoice{\mbox{\boldmath$\displaystyle\mathchar"#1#2#3#4$}}%
                      {\mbox{\boldmath$\textstyle\mathchar"#1#2#3#4$}}%
                      {\mbox{\boldmath$\scriptstyle\mathchar"#1#2#3#4$}}%
                      {\mbox{\boldmath$\scriptscriptstyle\mathchar"#1#2#3#4$}}%
        \else
           \mathchar"#1#2#3#4% 
        \fi 
    \else 
        \FindBoldGroup
        \ifnum\mathgroup=\theboldgroup % For 2e
           \mathchoice{\mbox{\boldmath$\displaystyle\mathchar"#1#2#3#4$}}%
                      {\mbox{\boldmath$\textstyle\mathchar"#1#2#3#4$}}%
                      {\mbox{\boldmath$\scriptstyle\mathchar"#1#2#3#4$}}%
                      {\mbox{\boldmath$\scriptscriptstyle\mathchar"#1#2#3#4$}}%
        \else
           \mathchar"#1#2#3#4% 
        \fi     	    
	  \fi}
\newif\ifGreekBold  \GreekBoldfalse
\let\SAVEPBF=\pbf
\def\pbf{\GreekBoldtrue\SAVEPBF}%
  \newcounter{equationnumber}  
  \def\mathletters{%
     \addtocounter{equation}{1}
     \edef\@currentlabel{\theequation}%
     \setcounter{equationnumber}{\c@equation}
     \setcounter{equation}{0}%
     \edef\theequation{\@currentlabel\noexpand\alph{equation}}%
  }
    \def\BibTeX{{\rm B\kern-.05em{\sc i\kern-.025em b}\kern-.08em
                 T\kern-.1667em\lower.7ex\hbox{E}\kern-.125emX}}}{}%
\def\AmS{{\protect\usefont{OMS}{cmsy}{m}{n}%
                A\kern-.1667em\lower.5ex\hbox{M}\kern-.125emS}}}{}%
\def\@@eqncr{\let\@tempa\relax
    \ifcase\@eqcnt \def\@tempa{& & &}\or \def\@tempa{& &}%
      \else \def\@tempa{&}\fi
     \@tempa
     \if@eqnsw
        \iftag@
           \@taggnum
        \else
           \@eqnnum\stepcounter{equation}%
        \fi
     \fi
     \global\tag@false
     \global\@eqnswtrue
     \global\@eqcnt\z@\cr}
\def\TCItag{\@ifnextchar*{\@TCItagstar}{\@TCItag}}
\def\@TCItag#1{%
    \global\tag@true
    \global\def\@taggnum{(#1)}}
\def\@TCItagstar*#1{%
    \global\tag@true
    \global\def\@taggnum{#1}}
\def\dsum{\mathop{\displaystyle \sum }}%
\def\dbigcup{\mathop{\displaystyle \bigcup }}%
\let\DOTSI\relax
\def\RIfM@{\relax\ifmmode}%
\def\FN@{\futurelet\next}%
\def\iint{\DOTSI\intno@\tw@\FN@\ints@}%
\def\iiint{\DOTSI\intno@\thr@@\FN@\ints@}%
\def\iiiint{\DOTSI\intno@4 \FN@\ints@}%
\def\idotsint{\DOTSI\intno@\z@\FN@\ints@}%
\def\ints@{\findlimits@\ints@@}%
\newif\iflimtoken@
\newif\iflimits@
\def\findlimits@{\limtoken@true\ifx\next\limits\limits@true
 \else\ifx\next\nolimits\limits@false\else
 \limtoken@false\ifx\ilimits@\nolimits\limits@false\else
 \ifinner\limits@false\else\limits@true\fi\fi\fi\fi}%
\def\multint@{\int\ifnum\intno@=\z@\intdots@                          %1
 \else\intkern@\fi                                                    %2
 \ifnum\intno@>\tw@\int\intkern@\fi                                   %3
 \ifnum\intno@>\thr@@\int\intkern@\fi                                 %4
 \int}%                                                               %5
\def\multintlimits@{\intop\ifnum\intno@=\z@\intdots@\else\intkern@\fi
 \ifnum\intno@>\tw@\intop\intkern@\fi
 \ifnum\intno@>\thr@@\intop\intkern@\fi\intop}%
\def\intic@{%
    \mathchoice{\hskip.5em}{\hskip.4em}{\hskip.4em}{\hskip.4em}}%
\def\negintic@{\mathchoice
 {\hskip-.5em}{\hskip-.4em}{\hskip-.4em}{\hskip-.4em}}%
\def\ints@@{\iflimtoken@                                              %1
 \def\ints@@@{\iflimits@\negintic@
   \mathop{\intic@\multintlimits@}\limits                             %2
  \else\multint@\nolimits\fi                                          %3
  \eat@}%                                                             %4
 \else                                                                %5
 \def\ints@@@{\iflimits@\negintic@
  \mathop{\intic@\multintlimits@}\limits\else
  \multint@\nolimits\fi}\fi\ints@@@}%
\def\intkern@{\mathchoice{\!\!\!}{\!\!}{\!\!}{\!\!}}%
\def\plaincdots@{\mathinner{\cdotp\cdotp\cdotp}}%
\def\intdots@{\mathchoice{\plaincdots@}%
 {{\cdotp}\mkern1.5mu{\cdotp}\mkern1.5mu{\cdotp}}%
 {{\cdotp}\mkern1mu{\cdotp}\mkern1mu{\cdotp}}%
 {{\cdotp}\mkern1mu{\cdotp}\mkern1mu{\cdotp}}}%
\def\RIfM@{\relax\protect\ifmmode}
\def\text{\RIfM@\expandafter\text@\else\expandafter\mbox\fi}
\let\nfss@text\text
\def\text@#1{\mathchoice
   {\textdef@\displaystyle\f@size{#1}}%
   {\textdef@\textstyle\tf@size{\firstchoice@false #1}}%
   {\textdef@\textstyle\sf@size{\firstchoice@false #1}}%
   {\textdef@\textstyle \ssf@size{\firstchoice@false #1}}%
   \glb@settings}
\def\textdef@#1#2#3{\hbox{{%
                    \everymath{#1}%
                    \let\f@size#2\selectfont
                    #3}}}
\newif\iffirstchoice@
\def\Let@{\relax\iffalse{\fi\let\\=\cr\iffalse}\fi}%
\def\vspace@{\def\vspace##1{\crcr\noalign{\vskip##1\relax}}}%
\def\multilimits@{\bgroup\vspace@\Let@
 \baselineskip\fontdimen10 \scriptfont\tw@
 \advance\baselineskip\fontdimen12 \scriptfont\tw@
 \lineskip\thr@@\fontdimen8 \scriptfont\thr@@
 \lineskiplimit\lineskip
 \vbox\bgroup\ialign\bgroup\hfil$\m@th\scriptstyle{##}$\hfil\crcr}%
\def\Sb{_\multilimits@}%
\def\endSb{\crcr\egroup\egroup\egroup}%
\def\Sp{^\multilimits@}%
\newdimen\ex@
\def\rightarrowfill@#1{$#1\m@th\mathord-\mkern-6mu\cleaders
 \hbox{$#1\mkern-2mu\mathord-\mkern-2mu$}\hfill
 \mkern-6mu\mathord\rightarrow$}%
\def\leftarrowfill@#1{$#1\m@th\mathord\leftarrow\mkern-6mu\cleaders
 \hbox{$#1\mkern-2mu\mathord-\mkern-2mu$}\hfill\mkern-6mu\mathord-$}%
\def\leftrightarrowfill@#1{$#1\m@th\mathord\leftarrow
\mkern-6mu\cleaders
 \hbox{$#1\mkern-2mu\mathord-\mkern-2mu$}\hfill
 \mkern-6mu\mathord\rightarrow$}%
\def\overrightarrow{\mathpalette\overrightarrow@}%
\def\overrightarrow@#1#2{\vbox{\ialign{##\crcr\rightarrowfill@#1\crcr
 \noalign{\kern-\ex@\nointerlineskip}$\m@th\hfil#1#2\hfil$\crcr}}}%
\def\overleftarrow{\mathpalette\overleftarrow@}%
\def\overleftarrow@#1#2{\vbox{\ialign{##\crcr\leftarrowfill@#1\crcr
 \noalign{\kern-\ex@\nointerlineskip}$\m@th\hfil#1#2\hfil$\crcr}}}%
\def\overleftrightarrow{\mathpalette\overleftrightarrow@}%
\def\overleftrightarrow@#1#2{\vbox{\ialign{##\crcr
   \leftrightarrowfill@#1\crcr
 \noalign{\kern-\ex@\nointerlineskip}$\m@th\hfil#1#2\hfil$\crcr}}}%
\def\underrightarrow{\mathpalette\underrightarrow@}%
\def\underrightarrow@#1#2{\vtop{\ialign{##\crcr$\m@th\hfil#1#2\hfil
  $\crcr\noalign{\nointerlineskip}\rightarrowfill@#1\crcr}}}%
\def\underleftarrow{\mathpalette\underleftarrow@}%
\def\underleftarrow@#1#2{\vtop{\ialign{##\crcr$\m@th\hfil#1#2\hfil
  $\crcr\noalign{\nointerlineskip}\leftarrowfill@#1\crcr}}}%
\def\underleftrightarrow{\mathpalette\underleftrightarrow@}%
\def\underleftrightarrow@#1#2{\vtop{\ialign{##\crcr$\m@th
  \hfil#1#2\hfil$\crcr
 \noalign{\nointerlineskip}\leftrightarrowfill@#1\crcr}}}%
\def\qopnamewl@#1{\mathop{\operator@font#1}\nlimits@}
\let\nlimits@\displaylimits
\def\setboxz@h{\setbox\z@\hbox}
\def\varlim@#1#2{\mathop{\vtop{\ialign{##\crcr
 \hfil$#1\m@th\operator@font lim$\hfil\crcr
 \noalign{\nointerlineskip}#2#1\crcr
 \noalign{\nointerlineskip\kern-\ex@}\crcr}}}}
 \def\rightarrowfill@#1{\m@th\setboxz@h{$#1-$}\ht\z@\z@
  $#1\copy\z@\mkern-6mu\cleaders
  \hbox{$#1\mkern-2mu\box\z@\mkern-2mu$}\hfill
  \mkern-6mu\mathord\rightarrow$}
\def\leftarrowfill@#1{\m@th\setboxz@h{$#1-$}\ht\z@\z@
  $#1\mathord\leftarrow\mkern-6mu\cleaders
  \hbox{$#1\mkern-2mu\copy\z@\mkern-2mu$}\hfill
  \mkern-6mu\box\z@$}
\def\projlim{\qopnamewl@{proj\,lim}}
\def\injlim{\qopnamewl@{inj\,lim}}
\def\varinjlim{\mathpalette\varlim@\rightarrowfill@}
\def\varprojlim{\mathpalette\varlim@\leftarrowfill@}
\def\varliminf{\mathpalette\varliminf@{}}
\def\varliminf@#1{\mathop{\underline{\vrule\@depth.2\ex@\@width\z@
   \hbox{$#1\m@th\operator@font lim$}}}}
\def\varlimsup{\mathpalette\varlimsup@{}}
\def\varlimsup@#1{\mathop{\overline
  {\hbox{$#1\m@th\operator@font lim$}}}}
\def\align{\@verbatim \frenchspacing\@vobeyspaces \@alignverbatim
You are using the "align" environment in a style in which it is not defined.}
\let\csname endalign*\endcsname =\endtrivlist
\def\alignat{\@verbatim \frenchspacing\@vobeyspaces \@alignatverbatim
You are using the "alignat" environment in a style in which it is not defined.}
\let\csname endalignat*\endcsname =\endtrivlist
\def\xalignat{\@verbatim \frenchspacing\@vobeyspaces \@xalignatverbatim
You are using the "xalignat" environment in a style in which it is not defined.}
\let\csname endxalignat*\endcsname =\endtrivlist
\def\gather{\@verbatim \frenchspacing\@vobeyspaces \@gatherverbatim
You are using the "gather" environment in a style in which it is not defined.}
\let\csname endgather*\endcsname =\endtrivlist
\def\multiline{\@verbatim \frenchspacing\@vobeyspaces \@multilineverbatim
You are using the "multiline" environment in a style in which it is not defined.}
\let\csname endmultiline*\endcsname =\endtrivlist
\def\arrax{\@verbatim \frenchspacing\@vobeyspaces \@arraxverbatim
You are using a type of "array" construct that is only allowed in AmS-LaTeX.}
\def\tabulax{\@verbatim \frenchspacing\@vobeyspaces \@tabulaxverbatim
You are using a type of "tabular" construct that is only allowed in AmS-LaTeX.}
\let\csname endarrax*\endcsname =\endtrivlist
\let\csname endtabulax*\endcsname =\endtrivlist
 \def\endequation{%
     \ifmmode\ifinner % FLEQN hack
      \iftag@
        \addtocounter{equation}{-1} % undo the increment made in the begin part
        $\hfil
           \displaywidth\linewidth\@taggnum\egroup \endtrivlist
        \global\tag@false
        \global\@ignoretrue   
      \else
        $\hfil
           \displaywidth\linewidth\@eqnnum\egroup \endtrivlist
        \global\tag@false
        \global\@ignoretrue 
      \fi
     \else   
      \iftag@
        \addtocounter{equation}{-1} % undo the increment made in the begin part
        \eqno \hbox{\@taggnum}
        \global\tag@false%
        $$\global\@ignoretrue
      \else
        \eqno \hbox{\@eqnnum}% $$ BRACE MATCHING HACK
        $$\global\@ignoretrue
      \fi
     \fi\fi
 } 
 \newif\iftag@ \tag@false
 \def\TCItag{\@ifnextchar*{\@TCItagstar}{\@TCItag}}
 \def\@TCItag#1{%
     \global\tag@true
     \global\def\@taggnum{(#1)}}
 \def\@TCItagstar*#1{%
     \global\tag@true
     \global\def\@taggnum{#1}}
     \def\tag{\@ifnextchar*{\@tagstar}{\@tag}}
     \def\@tag#1{%
         \global\tag@true
         \global\def\@taggnum{(#1)}}
     \def\@tagstar*#1{%
         \global\tag@true
         \global\def\@taggnum{#1}}
\begin{document}
\title[Concentration compactness principle in critical dimension]{A remark
on the concentration compactness principle in critical dimension}
\author{Fengbo Hang}
\address{Courant Institute, New York University, 251 Mercer Street, New York
NY 10012}
\email{fengbo@cims.nyu.edu}

\begin{abstract}
We prove some refinements of concentration compactness principle for Sobolev
space $W^{1,n}$ on a smooth compact Riemannian manifold of dimension $n$. As
an application, we extend Aubin's theorem for functions on $\mathbb{S}^{n}$
with zero first order moments of the area element to higher order moments
case. Our arguments are very flexible and can be easily modified for
functions satisfying various boundary conditions or belonging to higher
order Sobolev spaces.
\end{abstract}

\maketitle

\section{Introduction\label{sec1}}

Let $n\geq 2$, $\Omega \subset \mathbb{R}^{n}$ be a bounded open subset with
smooth boundary. In \cite{M}, it is showed that for any $u\in
W_{0}^{1,n}\left( \Omega \right) \backslash \left\{ 0\right\} $,%
\begin{equation}
\int_{\Omega }\exp \left( a_{n}\frac{\left\vert u\right\vert ^{\frac{n}{n-1}}%
}{\left\Vert \nabla u\right\Vert _{L^{n}\left( \Omega \right) }^{\frac{n}{n-1%
}}}\right) dx\leq c\left( n\right) \left\vert \Omega \right\vert .
\label{eq1.1}
\end{equation}%
Here $\left\vert \Omega \right\vert $ is the volume of $\Omega $ and%
\begin{equation}
a_{n}=n\left\vert \mathbb{S}^{n-1}\right\vert ^{\frac{1}{n-1}}.
\label{eq1.2}
\end{equation}%
$\left\vert \mathbb{S}^{n-1}\right\vert $ is the volume of $\mathbb{S}^{n-1}$
under the standard metric. For convenience, we will use this notation $a_{n}$
throughout the paper. (\ref{eq1.1}) can be viewed as a limit case of the
Sobolev embedding theorem.

To study extremal problems related to (\ref{eq1.1}), a concentration
compactness theorem \cite[Theorem 1.6 on p196, Remark 1.18 on p199]{Ln} was
proved. For $n=2$, the argument is elegant. The approach is recently applied
in \cite{CH} on smooth Riemann surfaces to deduce refinements of the
concentration compactness principle (see \cite[Section 2]{CH}). These
refinements are crucial in extending Aubin's classical theorem on $\mathbb{S}%
^{2}$ for functions with zero first order moments of the area element (see 
\cite[Corollary 2 on p159]{A}) to higher order moments cases, motivated from
similar inequalities on $\mathbb{S}^{1}$ (see \cite{CH, GrS, OPS, W}).

For $n\geq 3$, due to the subtle analytical difference between weak
convergence in $L^{2}$ and $L^{p}$, $p\neq 2$, \cite[p197]{Ln} has to use
special symmetrization process to gain the pointwise convergence of the
gradient of functions considered. Unfortunately, as pointed out in \cite{CCH}%
, this argument is not sufficient to derive \cite[Remark 1.18]{Ln}. More
accurate argument is presented in \cite{CCH}. More recently, \cite{LLZ}
deduce similar results on domains in Heisenberg group without using
symmetrization process.

The main aim of this note is to extend the analysis in \cite{CH} from
dimension $2$ to dimensions at least $3$. In particular we will prove
refinements of concentration compactness principle (Proposition \ref{prop2.1}
and Theorem \ref{thm2.1}). Our approach also does not use symmetrization
process and is more close to \cite{CH, Ln}. It can be easily modified for
functions satisfying various boundary conditions or belonging to higher
order Sobolev spaces.

Throughout the paper, we will assume $\left( M^{n},g\right) $ is a smooth
compact Riemannian manifold with dimension $n\geq 2$. For an integrable
function $u$ on $M$, we denote%
\begin{equation}
\overline{u}=\frac{1}{\mu \left( M\right) }\int_{M}ud\mu .  \label{eq1.3}
\end{equation}%
Here $\mu $ is the measure associated with the Riemannian metric $g$. The
Moser-Trudinger inequality (see \cite{F}) tells us that for every $u\in
W^{1,n}\left( M\right) \backslash \left\{ 0\right\} $ with $\overline{u}=0$,
we have%
\begin{equation}
\int_{M}\exp \left( a_{n}\frac{\left\vert u\right\vert ^{\frac{n}{n-1}}}{%
\left\Vert \nabla u\right\Vert _{L^{n}\left( M\right) }^{\frac{n}{n-1}}}%
\right) d\mu \leq c\left( M,g\right) .  \label{eq1.4}
\end{equation}%
Here $a_{n}$ is given in (\ref{eq1.2}).

It follows from (\ref{eq1.4}) and Young's inequality that for any $u\in
W^{1,n}\left( M\right) $ with $\overline{u}=0$, we have the
Moser-Trudinger-Onofri inequality%
\begin{equation}
\log \int_{M}e^{nu}d\mu \leq \alpha _{n}\left\Vert \nabla u\right\Vert
_{L^{n}}^{n}+c_{1}\left( M,g\right) .  \label{eq1.5}
\end{equation}%
Here%
\begin{equation}
\alpha _{n}=\left( \frac{n-1}{n}\right) ^{n-1}\frac{1}{\left\vert \mathbb{S}%
^{n-1}\right\vert }.  \label{eq1.6}
\end{equation}%
We will use this notation $\alpha _{n}$ throughout the paper.

In Section \ref{sec2} we will derive some refinements of concentration
compactness principle in dimension $n$. These refinements will be used in
Section \ref{sec3} to extend Aubin's theorem on $\mathbb{S}^{n}$ to
vanishing higher order moments case. In Section \ref{sec4}, we discuss
modifications of our approach applied to higher order Sobolev spaces and
Sobolev spaces on surfaces with nonempty boundary.

Last but not least, we would like to thank the referee for careful reading
of the manuscript and many valuable suggestions.

\section{Concentration compactness principle in critical dimension\label%
{sec2}}

As usual, for $u\in W^{1,n}\left( M\right) $, we denote%
\begin{equation*}
\left\Vert u\right\Vert _{W^{1,n}\left( M\right) }=\left( \left\Vert
u\right\Vert _{L^{n}\left( M\right) }^{n}+\left\Vert \nabla u\right\Vert
_{L^{n}\left( M\right) }^{n}\right) ^{\frac{1}{n}}.
\end{equation*}%
We start with a basic consequence of Moser-Trudinger inequality (\ref{eq1.4}%
). It should be compared with \cite[Lemma 2.1]{CH}.

\begin{lemma}
\label{lem2.1}For any $u\in W^{1,n}\left( M\right) $ and $a>0$, we have%
\begin{equation}
\int_{M}e^{a\left\vert u\right\vert ^{\frac{n}{n-1}}}d\mu <\infty .
\label{eq2.1}
\end{equation}
\end{lemma}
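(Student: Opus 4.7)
The plan is to reduce the statement to the Moser--Trudinger inequality (\ref{eq1.4}) by a density-plus-splitting argument, since (\ref{eq1.4}) only directly applies when $a\|\nabla u\|_{L^{n}}^{n/(n-1)} \leq a_{n}$ and the function has zero mean, while here $a$ and $u$ are arbitrary.

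First I would reduce to the case $\overline{u}=0$. Writing $\tilde u = u-\overline u$, the elementary inequality $(x+y)^{n/(n-1)}\leq (1+\varepsilon)x^{n/(n-1)}+C(\varepsilon) y^{n/(n-1)}$ for $x,y\geq 0$ (and any $\varepsilon>0$) gives
\begin{equation*}
|u|^{\frac{n}{n-1}} \leq (1+\varepsilon)|\tilde u|^{\frac{n}{n-1}} + C(\varepsilon)|\overline u|^{\frac{n}{n-1}},
\end{equation*}
and the second term is a bounded constant on $M$, so it suffices to bound $\int_{M} e^{a(1+\varepsilon)|\tilde u|^{n/(n-1)}}\,d\mu$.

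The main step is to deal with the possibly large size of $\|\nabla \tilde u\|_{L^{n}}$. By density of $C^{\infty}(M)$ in $W^{1,n}(M)$, for any prescribed $\delta>0$ I can pick $v\in C^{\infty}(M)$ with $\overline v=0$ such that $\|\nabla(\tilde u-v)\|_{L^{n}}<\delta$. Setting $w=\tilde u-v$, again the same elementary inequality yields
\begin{equation*}
|\tilde u|^{\frac{n}{n-1}} \leq (1+\varepsilon)|w|^{\frac{n}{n-1}} + C(\varepsilon)|v|^{\frac{n}{n-1}}.
\end{equation*}
Since $v$ is smooth, hence bounded, the contribution of the $v$-term to the exponential integral is bounded by a constant depending on $v$, $\varepsilon$, and $a$.

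For the $w$-term, I choose $\delta$ so small that $a(1+\varepsilon)^{2}\delta^{n/(n-1)}<a_{n}$. Since $\overline w = 0$, inequality (\ref{eq1.4}) applied to $w$ gives
\begin{equation*}
\int_{M}\exp\!\left(a_{n}\frac{|w|^{n/(n-1)}}{\|\nabla w\|_{L^{n}}^{n/(n-1)}}\right)d\mu \leq c(M,g),
\end{equation*}
and by the choice of $\delta$ the exponent $a(1+\varepsilon)^{2}|w|^{n/(n-1)}$ is dominated by the exponent above, so that integral is finite. Combining the estimates for $\tilde u$, $v$, and $\overline u$ yields (\ref{eq2.1}). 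The only real obstacle is arranging the bookkeeping so that the two successive applications of the split inequality leave enough room under $a_{n}$ to invoke Moser--Trudinger; this is why I fix $\varepsilon$ first and then choose the smooth approximation $v$ accordingly.
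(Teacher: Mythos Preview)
Your proposal is correct and follows essentially the same approach as the paper: approximate by a smooth function via density of $C^\infty(M)$ in $W^{1,n}(M)$, absorb the smooth part into a bounded constant, and apply the Moser--Trudinger inequality (\ref{eq1.4}) to the small remainder whose gradient norm can be made as small as desired. The only cosmetic difference is in the bookkeeping of the mean: you first reduce to $\overline u=0$ and then arrange $\overline v=0$ so that $w=\tilde u-v$ automatically has zero mean, whereas the paper approximates $u$ directly by $v\in C^\infty(M)$ and subtracts the mean of $w=u-v$ at the end, writing $u=v+\overline w+(w-\overline w)$ and applying (\ref{eq1.4}) to $w-\overline w$.
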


\begin{proof}
We could use the same argument as in the proof of \cite[Lemma 2.1]{CH}.
Instead we modify the approach a little bit so that it also works for higher
order Sobolev spaces. Without losing of generality, we can assume $u$ is
unbounded. Let $\varepsilon >0$ be a tiny number to be determined, we can
find $v\in C^{\infty }\left( M\right) $ such that%
\begin{equation*}
\left\Vert u-v\right\Vert _{W^{1,n}\left( M\right) }<\varepsilon .
\end{equation*}%
We denote $w=u-v$, then%
\begin{equation*}
u=v+w=v+\overline{w}+w-\overline{w}.
\end{equation*}%
It follows that%
\begin{equation*}
\left\vert u\right\vert \leq \left\Vert v\right\Vert _{L^{\infty
}}+\left\vert \overline{w}\right\vert +\left\vert w-\overline{w}\right\vert .
\end{equation*}%
Hence%
\begin{equation*}
\left\vert u\right\vert ^{\frac{n}{n-1}}\leq 2^{\frac{1}{n-1}}\left(
\left\Vert v\right\Vert _{L^{\infty }}+\left\vert \overline{w}\right\vert
\right) ^{\frac{n}{n-1}}+2^{\frac{1}{n-1}}\left\vert w-\overline{w}%
\right\vert ^{\frac{n}{n-1}}.
\end{equation*}%
We get%
\begin{eqnarray*}
e^{a\left\vert u\right\vert ^{\frac{n}{n-1}}} &\leq &e^{2^{\frac{1}{n-1}%
}a\left( \left\Vert v\right\Vert _{L^{\infty }}+\left\vert \overline{w}%
\right\vert \right) ^{\frac{n}{n-1}}}e^{2^{\frac{1}{n-1}}a\left\vert w-%
\overline{w}\right\vert ^{\frac{n}{n-1}}} \\
&\leq &e^{2^{\frac{1}{n-1}}a\left( \left\Vert v\right\Vert _{L^{\infty
}}+\left\vert \overline{w}\right\vert \right) ^{\frac{n}{n-1}}}e^{a_{n}\frac{%
\left\vert w-\overline{w}\right\vert ^{\frac{n}{n-1}}}{\left\Vert \nabla
w\right\Vert _{L^{n}}^{\frac{n}{n-1}}}}
\end{eqnarray*}%
when $\varepsilon $ is small enough. It follows from (\ref{eq1.4}) that%
\begin{equation*}
\int_{M}e^{a\left\vert u\right\vert ^{\frac{n}{n-1}}}d\mu \leq c\left(
M,g\right) e^{2^{\frac{1}{n-1}}a\left( \left\Vert v\right\Vert _{L^{\infty
}}+\left\vert \overline{w}\right\vert \right) ^{\frac{n}{n-1}}}<\infty .
\end{equation*}
\end{proof}

Now we are ready to prove a localized version of \cite[Theorem 1.6]{Ln}. The
reader should compare it with \cite[Lemma 2.2]{CH}.

\begin{proposition}
\label{prop2.1}Assume $u_{i}\in W^{1,n}\left( M^{n}\right) $ such that $%
\overline{u_{i}}=0$, $u_{i}\rightharpoonup u$ weakly in $W^{1,n}\left(
M\right) $ and%
\begin{equation}
\left\vert \nabla u_{i}\right\vert ^{n}d\mu \rightarrow \left\vert \nabla
u\right\vert ^{n}d\mu +\sigma  \label{eq2.2}
\end{equation}%
as measure. If $x\in M$ and $p\in \mathbb{R}$ such that $0<p<\sigma \left(
\left\{ x\right\} \right) ^{-\frac{1}{n-1}}$, then for some $r>0$,%
\begin{equation}
\sup_{i}\int_{B_{r}\left( x\right) }e^{a_{n}p\left\vert u_{i}\right\vert ^{%
\frac{n}{n-1}}}d\mu <\infty .  \label{eq2.3}
\end{equation}%
Here%
\begin{equation}
a_{n}=n\left\vert \mathbb{S}^{n-1}\right\vert ^{\frac{1}{n-1}}.
\label{eq2.4}
\end{equation}
\end{proposition}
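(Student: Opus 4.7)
I will mimic the cutoff/Moser--Trudinger strategy used for \cite[Lemma 2.2]{CH}: localize $u_i$ near $x$ by a cutoff, rewrite the exponent in a form where (\ref{eq1.4}) applies, and use hypothesis (\ref{eq2.2}) to show that the localized gradient $L^n$-norm is as close to $\sigma(\{x\})^{1/(n-1)}$ as we wish by shrinking $r$. The condition $p<\sigma(\{x\})^{-1/(n-1)}$ will then leave just enough room to absorb a small loss factor.

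Concretely, fix a cutoff $\phi\in C_{c}^{\infty}(M)$ with $\phi\equiv 1$ on $B_{r}(x)$, $\operatorname{supp}\phi\subset B_{2r}(x)$, $0\le\phi\le 1$, and $|\nabla\phi|\le 2/r$. Let $\bar{u}_{i}^{2r}=\mu(B_{2r}(x))^{-1}\int_{B_{2r}(x)}u_{i}\,d\mu$ and set $v_{i}=\phi(u_{i}-\bar{u}_{i}^{2r})$. On $B_{r}(x)$ we have $u_{i}=v_{i}+\bar{u}_{i}^{2r}$, and by $\|u_{i}\|_{W^{1,n}}$ being bounded the scalars $\bar{u}_{i}^{2r}$ are uniformly bounded; the elementary inequality $|a+b|^{n/(n-1)}\le(1+\varepsilon)|a|^{n/(n-1)}+C_{\varepsilon}|b|^{n/(n-1)}$ then gives a pointwise bound
\[
|u_{i}|^{n/(n-1)}\le(1+\varepsilon)|v_{i}|^{n/(n-1)}+C_{\varepsilon,r}\qquad\text{on }B_{r}(x).
\]

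Next I bound $\|\nabla v_{i}\|_{L^{n}}$ by splitting $\nabla v_{i}=\phi\nabla u_{i}+(u_{i}-\bar{u}_{i}^{2r})\nabla\phi$. For the first term, $\phi^{n}$ is continuous with compact support, so hypothesis (\ref{eq2.2}) yields
\[
\lim_{i\to\infty}\int_{M}\phi^{n}|\nabla u_{i}|^{n}d\mu=\int_{M}\phi^{n}|\nabla u|^{n}d\mu+\int_{M}\phi^{n}d\sigma\le\int_{B_{2r}(x)}|\nabla u|^{n}d\mu+\sigma(\overline{B_{2r}(x)}).
\]
For the second term, Rellich--Kondrachov gives $u_{i}\to u$ in $L^{n}(M)$, hence $\|u_{i}-\bar{u}_{i}^{2r}\|_{L^{n}(B_{2r}(x))}\to\|u-\bar{u}^{2r}\|_{L^{n}(B_{2r}(x))}$, and Poincar\'e on $B_{2r}$ bounds the latter by $Cr\|\nabla u\|_{L^{n}(B_{2r}(x))}$. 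Multiplying by $|\nabla\phi|\le 2/r$ and raising to the $n$-th power leaves a quantity of order $\|\nabla u\|_{L^{n}(B_{2r}(x))}^{n}$, which vanishes as $r\to 0$. Since $\sigma(\overline{B_{2r}(x)})\to\sigma(\{x\})$ and $\int_{B_{2r}(x)}|\nabla u|^{n}\to 0$, we conclude
\[
\limsup_{r\to 0}\limsup_{i\to\infty}\|\nabla v_{i}\|_{L^{n}(M)}^{n/(n-1)}\le\sigma(\{x\})^{1/(n-1)}.
\]

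To finish, use $p<\sigma(\{x\})^{-1/(n-1)}$ to pick $\varepsilon>0$ with $p(1+\varepsilon)<\sigma(\{x\})^{-1/(n-1)}$, then choose $r$ so small that $p(1+\varepsilon)\|\nabla v_{i}\|_{L^{n}}^{n/(n-1)}\le 1$ for all large $i$. Replacing $v_{i}$ by $v_{i}-\bar{v}_{i}$ (the mean $\bar{v}_{i}$ is uniformly bounded, which costs only another harmless multiplicative factor via the same $(1+\varepsilon)$ trick) and applying (\ref{eq1.4}), we get $\sup_{i\ge i_{0}}\int_{M}\exp\bigl(a_{n}p(1+\varepsilon)|v_{i}|^{n/(n-1)}\bigr)d\mu<\infty$; combined with the pointwise estimate above this yields the desired bound on $B_{r}(x)$ for $i\ge i_{0}$. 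The remaining finitely many indices are controlled directly by Lemma~\ref{lem2.1}. The only real technical point is the one I have underlined above: showing that the cross term $(u_{i}-\bar{u}_{i}^{2r})\nabla\phi$ contributes $o(1)$ as $r\to 0$, uniformly in $i$, so that no constant larger than $1$ multiplies $\sigma(\{x\})^{1/(n-1)}$ in the limit---this is where the combination of Rellich strong $L^{n}$-convergence and Poincar\'e on small balls is essential.
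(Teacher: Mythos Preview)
Your argument is correct and follows the same overall cutoff-plus-Moser--Trudinger strategy as the paper, but with a different decomposition of $u_i$. Where the paper writes $u_i=(u_i-u)+u$ and localizes $v_i=u_i-u$, you instead write $u_i=(u_i-\bar{u}_i^{2r})+\bar{u}_i^{2r}$ and localize $\phi(u_i-\bar{u}_i^{2r})$. This changes which auxiliary tool carries the remainder: the paper must control a genuine function $u$ in the exponent and does so via Lemma~\ref{lem2.1} and H\"older (using that $e^{c|u|^{n/(n-1)}}\in L^q$ for every $q<\infty$), whereas you only have a bounded scalar $\bar{u}_i^{2r}$ to absorb, so Lemma~\ref{lem2.1} enters only trivially for finitely many indices. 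In exchange you rely on the scaling-sharp Poincar\'e inequality $\|u-\bar{u}^{2r}\|_{L^n(B_{2r})}\le Cr\|\nabla u\|_{L^n(B_{2r})}$ on small geodesic balls, with $C$ uniform in $r$, to kill the cross term $(u_i-\bar{u}_i^{2r})\nabla\phi$; this is standard on a compact manifold for $r$ below the injectivity radius, but you should say so explicitly. The paper's route has the further advantage of generalizing painlessly to the higher-order spaces in Section~\ref{sec4}: subtracting the weak limit $u$ and using the compact embedding $W^{s,n/s}\hookrightarrow W^{s-1,n/s}$ dispatches all lower-order cross terms simultaneously, while your local-average subtraction would require controlling $r^{-(s-k)}\|D^{k}u\|_{L^{n/s}(B_{2r})}$ for each $0\le k\le s-1$, which is more delicate.
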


\begin{proof}
Fix $p_{1}\in \left( p,\sigma \left( \left\{ x\right\} \right) ^{-\frac{1}{%
n-1}}\right) $, then%
\begin{equation}
\sigma \left( \left\{ x\right\} \right) <\frac{1}{p_{1}^{n-1}}.
\label{eq2.5}
\end{equation}%
We can find a $\varepsilon >0$ such that%
\begin{equation}
\left( 1+\varepsilon \right) \sigma \left( \left\{ x\right\} \right) <\frac{1%
}{p_{1}^{n-1}}  \label{eq2.6}
\end{equation}%
and%
\begin{equation}
\left( 1+\varepsilon \right) p<p_{1}.  \label{eq2.7}
\end{equation}%
Let $v_{i}=u_{i}-u$, then $v_{i}\rightharpoonup 0$ weakly in $W^{1,n}\left(
M\right) $, $v_{i}\rightarrow 0$ in $L^{n}\left( M\right) $. For any $%
\varphi \in C^{\infty }\left( M\right) $, we have%
\begin{eqnarray*}
&&\left\Vert \nabla \left( \varphi v_{i}\right) \right\Vert _{L^{n}}^{n} \\
&\leq &\left( \left\Vert \varphi \nabla v_{i}\right\Vert _{L^{n}}+\left\Vert
v_{i}\nabla \varphi \right\Vert _{L^{n}}\right) ^{n} \\
&\leq &\left( \left\Vert \varphi \nabla u_{i}\right\Vert _{L^{n}}+\left\Vert
\varphi \nabla u\right\Vert _{L^{n}}+\left\Vert v_{i}\nabla \varphi
\right\Vert _{L^{n}}\right) ^{n} \\
&\leq &\left( 1+\varepsilon \right) \left\Vert \varphi \nabla
u_{i}\right\Vert _{L^{n}}^{n}+c\left( n,\varepsilon \right) \left\Vert
\varphi \nabla u\right\Vert _{L^{n}}^{n}+c\left( n,\varepsilon \right)
\left\Vert v_{i}\nabla \varphi \right\Vert _{L^{n}}^{n}.
\end{eqnarray*}%
It follows that%
\begin{eqnarray*}
&&\lim \sup_{i\rightarrow \infty }\left\Vert \nabla \left( \varphi
v_{i}\right) \right\Vert _{L^{n}}^{n} \\
&\leq &\left( 1+\varepsilon \right) \left( \int_{M}\left\vert \varphi
\right\vert ^{n}d\sigma +\int_{M}\left\vert \varphi \right\vert
^{n}\left\vert \nabla u\right\vert ^{n}d\mu \right) +c\left( n,\varepsilon
\right) \left\Vert \varphi \nabla u\right\Vert _{L^{n}}^{n} \\
&=&\left( 1+\varepsilon \right) \int_{M}\left\vert \varphi \right\vert
^{n}d\sigma +c_{1}\left( n,\varepsilon \right) \int_{M}\left\vert \varphi
\right\vert ^{n}\left\vert \nabla u\right\vert ^{n}d\mu .
\end{eqnarray*}%
Note that we can find a $\varphi \in C^{\infty }\left( M\right) $ such that $%
\left. \varphi \right\vert _{B_{r}\left( x\right) }=1$ for some $r>0$ and%
\begin{equation*}
\left( 1+\varepsilon \right) \int_{M}\left\vert \varphi \right\vert
^{n}d\sigma +c_{1}\left( n,\varepsilon \right) \int_{M}\left\vert \varphi
\right\vert ^{n}\left\vert \nabla u\right\vert ^{n}d\mu <\frac{1}{p_{1}^{n-1}%
}.
\end{equation*}%
Hence for $i$ large enough, we have%
\begin{equation*}
\left\Vert \nabla \left( \varphi v_{i}\right) \right\Vert _{L^{n}}^{n}<\frac{%
1}{p_{1}^{n-1}}.
\end{equation*}%
In other words,%
\begin{equation*}
\left\Vert \nabla \left( \varphi v_{i}\right) \right\Vert _{L^{n}}^{\frac{n}{%
n-1}}<\frac{1}{p_{1}}.
\end{equation*}%
We have%
\begin{eqnarray*}
\int_{B_{r}\left( x\right) }e^{a_{n}p_{1}\left\vert v_{i}-\overline{\varphi
v_{i}}\right\vert ^{\frac{n}{n-1}}}d\mu &\leq
&\int_{M}e^{a_{n}p_{1}\left\vert \varphi v_{i}-\overline{\varphi v_{i}}%
\right\vert ^{\frac{n}{n-1}}}d\mu \\
&\leq &\int_{M}e^{a_{n}\frac{\left\vert \varphi v_{i}-\overline{\varphi v_{i}%
}\right\vert ^{\frac{n}{n-1}}}{\left\Vert \nabla \left( \varphi v_{i}\right)
\right\Vert _{L^{n}}^{\frac{n}{n-1}}}}d\mu \\
&\leq &c\left( M,g\right) .
\end{eqnarray*}%
Next we observe that%
\begin{eqnarray*}
&&\left\vert u_{i}\right\vert ^{\frac{n}{n-1}} \\
&=&\left\vert \left( v_{i}-\overline{\varphi v_{i}}\right) +u+\overline{%
\varphi v_{i}}\right\vert ^{\frac{n}{n-1}} \\
&\leq &\left( 1+\varepsilon \right) \left\vert v_{i}-\overline{\varphi v_{i}}%
\right\vert ^{\frac{n}{n-1}}+c\left( n,\varepsilon \right) \left\vert
u\right\vert ^{\frac{n}{n-1}}+c\left( n,\varepsilon \right) \left\vert 
\overline{\varphi v_{i}}\right\vert ^{\frac{n}{n-1}},
\end{eqnarray*}%
hence%
\begin{equation*}
e^{a_{n}\left\vert u_{i}\right\vert ^{\frac{n}{n-1}}}\leq e^{\left(
1+\varepsilon \right) a_{n}\left\vert v_{i}-\overline{\varphi v_{i}}%
\right\vert ^{\frac{n}{n-1}}}e^{c\left( n,\varepsilon \right) \left\vert
u\right\vert ^{\frac{n}{n-1}}}e^{c\left( n,\varepsilon \right) \left\vert 
\overline{\varphi v_{i}}\right\vert ^{\frac{n}{n-1}}}.
\end{equation*}%
Since $e^{\left( 1+\varepsilon \right) a_{n}\left\vert v_{i}-\overline{%
\varphi v_{i}}\right\vert ^{\frac{n}{n-1}}}$ is bounded in $L^{\frac{p_{1}}{%
1+\varepsilon }}\left( B_{r}\left( x\right) \right) $, $e^{c\left(
n,\varepsilon \right) \left\vert u\right\vert ^{\frac{n}{n-1}}}\in
L^{q}\left( B_{r}\left( x\right) \right) $ for any $0<q<\infty $ (by Lemma %
\ref{lem2.1}), $e^{c\left( n,\varepsilon \right) \left\vert \overline{%
\varphi v_{i}}\right\vert ^{\frac{n}{n-1}}}\rightarrow 1$ as $i\rightarrow
\infty $ and $\frac{p_{1}}{1+\varepsilon }>p$, it follows from Holder
inequality that $e^{a_{n}\left\vert u_{i}\right\vert ^{\frac{n}{n-1}}}$ is
bounded in $L^{p}\left( B_{r}\left( x\right) \right) $.
\end{proof}

\begin{corollary}
\label{cor2.1}Assume $u_{i}\in W^{1,n}\left( M\right) $ such that $\overline{%
u_{i}}=0$ and $\left\Vert \nabla u_{i}\right\Vert _{L^{n}}\leq 1$. We also
assume $u_{i}\rightharpoonup u$ weakly in $W^{1,n}\left( M\right) $ and%
\begin{equation}
\left\vert \nabla u_{i}\right\vert ^{n}d\mu \rightarrow \left\vert \nabla
u\right\vert ^{n}d\mu +\sigma  \label{eq2.8}
\end{equation}%
as measure. Let $K$ be a compact subset of $M$ and%
\begin{equation}
\kappa =\max_{x\in K}\sigma \left( \left\{ x\right\} \right) .  \label{eq2.9}
\end{equation}

\begin{enumerate}
\item If $\kappa <1$, then for any $1\leq p<\kappa ^{-\frac{1}{n-1}}$, 
\begin{equation}
\sup_{i}\int_{K}e^{a_{n}p\left\vert u_{i}\right\vert ^{\frac{n}{n-1}}}d\mu
<\infty .  \label{eq2.10}
\end{equation}

\item If $\kappa =1$, then $\sigma =\delta _{x_{0}}$ for some $x_{0}\in K$, $%
u=0$ and after passing to a subsequence,%
\begin{equation}
e^{a_{n}\left\vert u_{i}\right\vert ^{\frac{n}{n-1}}}\rightarrow
1+c_{0}\delta _{x_{0}}  \label{eq2.11}
\end{equation}%
as measure for some $c_{0}\geq 0$.
\end{enumerate}
\end{corollary}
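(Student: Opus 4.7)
The plan is as follows. Part (1) is a direct covering argument from Proposition~\ref{prop2.1}. Every $x \in K$ satisfies $\sigma(\{x\}) \leq \kappa$, so $p < \kappa^{-1/(n-1)} \leq \sigma(\{x\})^{-1/(n-1)}$, and Proposition~\ref{prop2.1} supplies a radius $r_x > 0$ with $\sup_i \int_{B_{r_x}(x)} e^{a_n p |u_i|^{n/(n-1)}}\,d\mu < \infty$. Extracting a finite subcover of the compact set $K$ and summing then yields (\ref{eq2.10}).

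For Part (2), I first identify $u$ and $\sigma$. Testing the measure convergence (\ref{eq2.8}) against the constant function $1$ gives
\[
\int_M |\nabla u|^n\,d\mu + \sigma(M) \;=\; \lim_i \int_M |\nabla u_i|^n\,d\mu \;\leq\; 1.
\]
Since some $x_0 \in K$ satisfies $\sigma(\{x_0\}) = \kappa = 1$, this forces $\nabla u \equiv 0$ and $\sigma = \delta_{x_0}$. Weak $W^{1,n}$-convergence preserves the zero-mean condition, so the constant function $u$ equals $0$.

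Next I extract the limit of $e^{a_n|u_i|^{n/(n-1)}}\,d\mu$. By the Moser--Trudinger inequality (\ref{eq1.4}) these measures are uniformly bounded, so along a subsequence they converge as Radon measures to some $\nu$, and $\nu \geq d\mu$ since $e^{a_n|u_i|^{n/(n-1)}} \geq 1$. For any $x \neq x_0$ one has $\sigma(\{x\}) = 0$, so Proposition~\ref{prop2.1} applied with some $p > 1$ provides a neighborhood $U_x$ on which $\{e^{a_n|u_i|^{n/(n-1)}}\}_i$ is bounded in $L^p$. The Rellich theorem gives $u_i \to 0$ in $L^n$, hence (along a further subsequence) almost everywhere on $M$; Vitali's convergence theorem then upgrades this to $e^{a_n|u_i|^{n/(n-1)}} \to 1$ in $L^1(U_x)$. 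Covering an arbitrary compact subset of $M \setminus \{x_0\}$ by finitely many such $U_x$ shows $\nu$ agrees with $d\mu$ away from $x_0$, whence $\nu = d\mu + c_0\,\delta_{x_0}$ with $c_0 := \nu(\{x_0\}) \geq 0$.

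The substantive analytic content is entirely contained in Proposition~\ref{prop2.1}; the one delicate step of the present argument is the Vitali-type passage from a uniform $L^p$-bound with $p > 1$ plus pointwise convergence to genuine $L^1$-convergence of the exponentials on neighborhoods avoiding $x_0$. Beyond that, the proof is routine bookkeeping with finite subcovers and subsequences.
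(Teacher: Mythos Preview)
Your proof is correct and follows essentially the same approach as the paper's: a finite-subcover argument from Proposition~\ref{prop2.1} for Part~(1), and for Part~(2) the identification $u=0$, $\sigma=\delta_{x_0}$ from the total-mass constraint, followed by $L^p$-bounds (with $p>1$) away from $x_0$ plus a.e.\ convergence to force $L^1$-convergence of the exponentials there. The only cosmetic differences are that the paper invokes Part~(1) on $M\setminus B_r(x_0)$ rather than re-applying Proposition~\ref{prop2.1} pointwise, and that it extracts the a.e.-convergent subsequence before (rather than after) the weak-$*$ limit of measures; neither affects the argument.
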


\begin{proof}
First assume $\kappa <1$. For any $x\in K$, we have%
\begin{equation}
1\leq p<\kappa ^{-\frac{1}{n-1}}\leq \sigma \left( \left\{ x\right\} \right)
^{-\frac{1}{n-1}}.  \label{eq2.12}
\end{equation}%
By the Proposition \ref{prop2.1} we can find $r_{x}>0$ such that%
\begin{equation}
\sup_{i}\int_{B_{r_{x}}\left( x\right) }e^{a_{n}p\left\vert u_{i}\right\vert
^{\frac{n}{n-1}}}d\mu <\infty .  \label{eq2.13}
\end{equation}%
We have an open covering%
\begin{equation*}
K\subset \dbigcup\limits_{x\in K}B_{r_{x}}\left( x\right) ,
\end{equation*}%
hence there exists $x_{1},\cdots ,x_{N}\in K$ such that%
\begin{equation*}
K\subset \dbigcup\limits_{k=1}^{N}B_{r_{k}}\left( x_{k}\right) .
\end{equation*}%
Here $r_{k}=r_{x_{k}}$. For any $i$,%
\begin{eqnarray*}
\int_{K}e^{a_{n}p\left\vert u_{i}\right\vert ^{\frac{n}{n-1}}}d\mu &\leq
&\sum_{k=1}^{N}\int_{B_{r_{k}\left( x_{k}\right) }}e^{a_{n}p\left\vert
u_{i}\right\vert ^{\frac{n}{n-1}}}d\mu \\
&\leq &\sum_{k=1}^{N}\sup_{j}\int_{B_{r_{k}\left( x_{k}\right)
}}e^{a_{n}p\left\vert u_{j}\right\vert ^{\frac{n}{n-1}}}d\mu \\
&<&\infty .
\end{eqnarray*}

Next assume $\kappa =1$, then for some $x_{0}\in K$, $\sigma \left( \left\{
x_{0}\right\} \right) =1$. Since%
\begin{equation*}
\int_{M}\left\vert \nabla u\right\vert ^{n}d\mu +\sigma \left( M\right) \leq
1,
\end{equation*}%
we see $u$ must be a constant function and $\sigma =\delta _{x_{0}}$. Using $%
\overline{u}=0$, we see $u=0$. After passing to a subsequence, we can assume 
$u_{i}\rightarrow 0$ a.e. For any $r>0$, it follows from the first case that 
$e^{a_{n}\left\vert u_{i}\right\vert ^{\frac{n}{n-1}}}$ is bounded in $%
L^{q}\left( M\backslash B_{r}\left( x_{0}\right) \right) $ for any $q<\infty 
$, hence $e^{a_{n}\left\vert u_{i}\right\vert ^{\frac{n}{n-1}}}\rightarrow 1$
in $L^{1}\left( M\backslash B_{r}\left( x_{0}\right) \right) $. This
together with the fact%
\begin{equation*}
\int_{M}e^{a_{n}\left\vert u_{i}\right\vert ^{\frac{n}{n-1}}}d\mu \leq
c\left( M,g\right)
\end{equation*}%
implies that after passing to a subsequence, $e^{a_{n}\left\vert
u_{i}\right\vert ^{\frac{n}{n-1}}}\rightarrow 1+c_{0}\delta _{x_{0}}$ as
measure for some $c_{0}\geq 0$.
\end{proof}

\begin{theorem}
\label{thm2.1}Assume $\alpha >0$, $m_{i}>0$, $m_{i}\rightarrow \infty $, $%
u_{i}\in W^{1,n}\left( M^{n}\right) $ such that $\overline{u_{i}}=0$ and%
\begin{equation}
\log \int_{M}e^{nm_{i}u_{i}}d\mu \geq \alpha m_{i}^{n}.  \label{eq2.14}
\end{equation}%
We also assume $u_{i}\rightharpoonup u$ weakly in $W^{1,n}\left( M\right) $, 
$\left\vert \nabla u_{i}\right\vert ^{n}d\mu \rightarrow \left\vert \nabla
u\right\vert ^{n}d\mu +\sigma $ as measure and%
\begin{equation}
\frac{e^{nm_{i}u_{i}}}{\int_{M}e^{nm_{i}u_{i}}d\mu }\rightarrow \nu
\label{eq2.15}
\end{equation}%
as measure. Let%
\begin{equation}
\left\{ x\in M:\sigma \left( \left\{ x\right\} \right) \geq \alpha
_{n}^{-1}\alpha \right\} =\left\{ x_{1},\cdots ,x_{N}\right\} ,
\label{eq2.16}
\end{equation}%
here%
\begin{equation}
\alpha _{n}=\left( \frac{n-1}{n}\right) ^{n-1}\frac{1}{\left\vert \mathbb{S}%
^{n-1}\right\vert },  \label{eq2.17}
\end{equation}%
then%
\begin{equation}
\nu =\sum_{i=1}^{N}\nu _{i}\delta _{x_{i}},  \label{eq2.18}
\end{equation}%
here $\nu _{i}\geq 0$ and $\sum_{i=1}^{N}\nu _{i}=1$.
\end{theorem}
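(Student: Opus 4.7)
The plan is to prove that $\nu $ is supported on the finite set $\{x_{1},\ldots ,x_{N}\}$; once this is established, the representation $\nu =\sum_{i=1}^{N}\nu _{i}\delta _{x_{i}}$ with $\nu _{i}\geq 0$ and $\sum \nu _{i}=1$ follows immediately, since $\nu $ is the weak$^{\ast }$ limit of probability measures on the compact manifold $M$ and hence is itself a probability measure. Note that $\sigma $ is automatically a finite Borel measure (weak convergence in $W^{1,n}$ bounds $\Vert \nabla u_{i}\Vert _{L^{n}}$), so the set of atoms of $\sigma $ with mass at least $\alpha _{n}^{-1}\alpha $ is finite, consistent with the notation $\{x_{1},\ldots ,x_{N}\}$.

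The core claim is the following: for every $x\in M$ with $\sigma (\{x\})<\alpha _{n}^{-1}\alpha $, there exists $r>0$ such that $\nu (B_{r}(x))=0$. Once this is proved, a standard compactness argument closes out the theorem: for any $\delta >0$ the compact set $K_{\delta }=M\setminus \bigcup_{k=1}^{N}B_{\delta }(x_{k})$ is covered by finitely many such $\nu $-null balls, hence $\nu (K_{\delta })=0$, and letting $\delta \rightarrow 0$ yields $\nu (M\setminus \{x_{1},\ldots ,x_{N}\})=0$.

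To prove the core claim, I would combine Proposition \ref{prop2.1} with a Young-inequality bootstrap. First pick $p$ with $(\alpha _{n}/\alpha )^{1/(n-1)}<p<\sigma (\{x\})^{-1/(n-1)}$; such $p$ exists precisely because $\sigma (\{x\})<\alpha _{n}^{-1}\alpha $. Proposition \ref{prop2.1} then supplies an $r>0$ with $\sup_{i}\int_{B_{r}(x)}e^{a_{n}p\left\vert u_{i}\right\vert ^{n/(n-1)}}d\mu <\infty $. Next apply Young's inequality $AB\leq A^{n}/n+(n-1)B^{n/(n-1)}/n$ with $A=qnm_{i}/\lambda $, $B=\lambda \left\vert u_{i}\right\vert $, and $\lambda $ chosen so that $(n-1)\lambda ^{n/(n-1)}/n=a_{n}p$. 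Using $a_{n}^{n-1}=n^{n-1}\left\vert \mathbb{S}^{n-1}\right\vert $ together with the definition of $\alpha _{n}$, this produces
\begin{equation*}
qnm_{i}u_{i}\leq a_{n}p\left\vert u_{i}\right\vert ^{n/(n-1)}+\frac{q^{n}\alpha _{n}m_{i}^{n}}{p^{n-1}}
\end{equation*}
for every $q>0$, so $\int_{B_{r}(x)}e^{qnm_{i}u_{i}}d\mu \leq Ce^{q^{n}\alpha _{n}m_{i}^{n}/p^{n-1}}$.

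Finally I would choose $q>1$ with $q^{n-1}\alpha _{n}/p^{n-1}<\alpha $, which is possible because $p^{n-1}>\alpha _{n}/\alpha $. Holder's inequality and the hypothesis $\int_{M}e^{nm_{i}u_{i}}d\mu \geq e^{\alpha m_{i}^{n}}$ then give
\begin{equation*}
\frac{\int_{B_{r}(x)}e^{nm_{i}u_{i}}d\mu }{\int_{M}e^{nm_{i}u_{i}}d\mu }\leq C^{\prime }e^{(q^{n-1}\alpha _{n}/p^{n-1}-\alpha )m_{i}^{n}}\rightarrow 0,
\end{equation*}
and weak$^{\ast }$ lower semicontinuity of measures on open sets forces $\nu (B_{r}(x))=0$. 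The main obstacle is the exponent arithmetic: Young's inequality must be scaled so that the constant in front of $m_{i}^{n}$ is exactly $q^{n}\alpha _{n}/p^{n-1}$, and the admissible window for $p$ opens up precisely at the threshold $\sigma (\{x\})=\alpha _{n}^{-1}\alpha $, which is why the theorem's dichotomy sits exactly there.
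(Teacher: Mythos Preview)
Your proof is correct and follows essentially the same route as the paper: apply Proposition~\ref{prop2.1} at any point $x$ with $\sigma(\{x\})<\alpha_n^{-1}\alpha$, use Young's inequality to convert the $e^{a_n p|u_i|^{n/(n-1)}}$ bound into an $e^{nm_i u_i}$ bound, and compare exponents against the hypothesis $\log\int_M e^{nm_i u_i}\,d\mu\geq\alpha m_i^n$ to force $\nu(B_r(x))=0$. The only difference is that you introduce an auxiliary parameter $q>1$ and a H\"older step; the paper simply takes $q=1$, obtaining $nm_i u_i\leq a_n p|u_i|^{n/(n-1)}+\alpha_n m_i^n/p^{n-1}$ directly, which already suffices since $\alpha_n/p^{n-1}<\alpha$ by the choice of $p$---so your detour through $q$ and H\"older is harmless but unnecessary.
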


\begin{proof}
Assume $x\in M$ such that $\sigma \left( \left\{ x\right\} \right) <\alpha
_{n}^{-1}\alpha $, then we claim that for some $r>0$, $\nu \left(
B_{r}\left( x\right) \right) =0$. Indeed we fix $p$ such that%
\begin{equation}
\alpha _{n}^{\frac{1}{n-1}}\alpha ^{-\frac{1}{n-1}}<p<\sigma \left( \left\{
x\right\} \right) ^{-\frac{1}{n-1}},  \label{eq2.19}
\end{equation}%
it follows from Proposition \ref{prop2.1} that for some $r>0$,%
\begin{equation}
\int_{B_{r}\left( x\right) }e^{a_{n}p\left\vert u_{i}\right\vert ^{\frac{n}{%
n-1}}}d\mu \leq c,  \label{eq2.20}
\end{equation}%
here $c$ is a positive constant independent of $i$. By Young's inequality we
have%
\begin{equation}
nm_{i}u_{i}\leq a_{n}p\left\vert u_{i}\right\vert ^{\frac{n}{n-1}}+\frac{%
\alpha _{n}m_{i}^{n}}{p^{n-1}},  \label{eq2.21}
\end{equation}%
hence%
\begin{equation*}
\int_{B_{r}\left( x\right) }e^{nm_{i}u_{i}}d\mu \leq ce^{\frac{\alpha
_{n}m_{i}^{n}}{p^{n-1}}}.
\end{equation*}%
It follows that%
\begin{equation*}
\frac{\int_{B_{r}\left( x\right) }e^{nm_{i}u_{i}}d\mu }{%
\int_{M}e^{nm_{i}u_{i}}d\mu }\leq ce^{\left( \frac{\alpha _{n}}{p^{n-1}}%
-\alpha \right) m_{i}^{n}}.
\end{equation*}%
In particular,%
\begin{equation*}
\nu \left( B_{r}\left( x\right) \right) \leq \lim \inf_{i\rightarrow \infty }%
\frac{\int_{B_{r}\left( x\right) }e^{nm_{i}u_{i}}d\mu }{%
\int_{M}e^{nm_{i}u_{i}}d\mu }=0.
\end{equation*}%
We get $\nu \left( B_{r}\left( x\right) \right) =0$. The claim is proved.

Clearly the claim implies%
\begin{equation}
\nu \left( M\backslash \left\{ x_{1},\cdots ,x_{N}\right\} \right) =0.
\label{eq2.22}
\end{equation}%
Hence $\nu =\sum_{i=1}^{N}\nu _{i}\delta _{x_{i}},$ with $\nu _{i}\geq 0$
and $\sum_{i=1}^{N}\nu _{i}=1$.
\end{proof}

It is worth pointing out that the arguments for Proposition \ref{prop2.1}
and Theorem \ref{thm2.1} can be easily modified to work for functions
satisfying various boundary conditions or belonging to higher order Sobolev
spaces. We will discuss these examples in Section \ref{sec4}.

\section{A generalization of Aubin inequality to higher order moments case
on $\mathbb{S}^{n}$\label{sec3}}

Here we will extend Aubin's inequality for functions on $\mathbb{S}^{n}$
with zero first order moments of the area element (see \cite[Corollary 2,
p159]{A}) to higher order moments cases. For $n=2$, this is done in \cite{CH}%
.

First we introduce some notations. For a nonnegative integer $k$, we denote%
\begin{eqnarray}
\mathcal{P}_{k} &=&\mathcal{P}_{k}\left( \mathbb{R}^{n+1}\right) =\left\{ 
\text{all polynomials on }\mathbb{R}^{n+1}\text{ with degree at most }%
k\right\} ;  \label{eq3.1} \\
\overset{\circ }{\mathcal{P}}_{k} &=&\overset{\circ }{\mathcal{P}}_{k}\left( 
\mathbb{R}^{n+1}\right) =\left\{ p\in \mathcal{P}_{k}:\int_{\mathbb{S}%
^{n}}pd\mu =0\right\} .  \label{eq3.2}
\end{eqnarray}%
Here $\mu $ is the standard measure on $\mathbb{S}^{n}$.

\begin{definition}
\label{def3.1}For $m\in \mathbb{N}$, let%
\begin{eqnarray*}
&&\mathcal{N}_{m}\left( \mathbb{S}^{n}\right) \\
&=&\left\{ N\in \mathbb{N}:\exists x_{1},\cdots ,x_{N}\in \mathbb{S}^{n}%
\text{ and }\nu _{1},\cdots ,\nu _{N}\in \left[ 0,\infty \right) \text{ s.t. 
}\nu _{1}+\cdots +\nu _{N}=1\right. \\
&&\left. \text{and for any }p\in \overset{\circ }{\mathcal{P}}_{m}\text{, }%
\nu _{1}p\left( x_{1}\right) +\cdots +\nu _{N}p\left( x_{N}\right) =0\text{.}%
\right\} \\
&=&\left\{ N\in \mathbb{N}:\exists x_{1},\cdots ,x_{N}\in \mathbb{S}^{n}%
\text{ and }\nu _{1},\cdots ,\nu _{N}\in \left[ 0,\infty \right) \text{ s.t.
for any }p\in \mathcal{P}_{m}\text{,}\right. \\
&&\left. \nu _{1}p\left( x_{1}\right) +\cdots +\nu _{N}p\left( x_{N}\right) =%
\frac{1}{\left\vert \mathbb{S}^{n}\right\vert }\int_{\mathbb{S}^{n}}pd\mu
.\right\} \text{,}
\end{eqnarray*}%
and%
\begin{equation}
N_{m}\left( \mathbb{S}^{n}\right) =\min \mathcal{N}_{m}\left( \mathbb{S}%
^{n}\right) .  \label{eq3.3}
\end{equation}
\end{definition}

As in \cite{CH}, every choice of $\nu _{1},\cdots ,\nu _{N}$ and $%
x_{1},\cdots ,x_{N}$ in Definition \ref{def3.1} corresponds to an algorithm
for numerical integration of functions on $\mathbb{S}^{n}$ (see \cite{Co,
HSW} for further discussion).

\begin{theorem}
\label{thm3.1}Assume $u\in W^{1,n}\left( \mathbb{S}^{n}\right) $ such that $%
\int_{\mathbb{S}^{n}}ud\mu =0$ (here $\mu $ is the standard measure on $%
\mathbb{S}^{n}$) and for every $p\in \overset{\circ }{\mathcal{P}}_{m}$, $%
\int_{\mathbb{S}^{n}}pe^{nu}d\mu =0$, then for any $\varepsilon >0$, we have%
\begin{equation}
\log \int_{\mathbb{S}^{n}}e^{nu}d\mu \leq \left( \frac{\alpha _{n}}{%
N_{m}\left( \mathbb{S}^{n}\right) }+\varepsilon \right) \left\Vert \nabla
u\right\Vert _{L^{n}}^{n}+c\left( m,n,\varepsilon \right) .  \label{eq3.4}
\end{equation}%
Here $N_{m}\left( \mathbb{S}^{n}\right) $ is defined in Definition \ref%
{def3.1} and%
\begin{equation*}
\alpha _{n}=\left( \frac{n-1}{n}\right) ^{n-1}\frac{1}{\left\vert \mathbb{S}%
^{n-1}\right\vert }.
\end{equation*}
\end{theorem}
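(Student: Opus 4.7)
The plan is to argue by contradiction via rescaling followed by an application of Theorem~\ref{thm2.1}. Suppose the inequality fails; then one finds $\varepsilon>0$, which after shrinking we may assume satisfies $\alpha_n/N_m(\mathbb{S}^n)+\varepsilon<\alpha_n$, and a sequence $u_i\in W^{1,n}(\mathbb{S}^n)$ meeting all the hypotheses of the theorem yet with
\[
\log\int_{\mathbb{S}^n}e^{nu_i}\,d\mu>\left(\frac{\alpha_n}{N_m(\mathbb{S}^n)}+\varepsilon\right)\|\nabla u_i\|_{L^n}^n+i.
\]
Combined with the Moser--Trudinger--Onofri inequality~(\ref{eq1.5}), this forces $m_i:=\|\nabla u_i\|_{L^n}\to\infty$. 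Set $\tilde u_i=u_i/m_i$, so that $\overline{\tilde u_i}=0$, $\|\nabla\tilde u_i\|_{L^n}=1$, and the Poincar\'e inequality bounds $\tilde u_i$ in $W^{1,n}(\mathbb{S}^n)$.

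After passing to a subsequence we have $\tilde u_i\rightharpoonup\tilde u$ weakly in $W^{1,n}(\mathbb{S}^n)$, $|\nabla\tilde u_i|^n d\mu\to|\nabla\tilde u|^n d\mu+\sigma$ as measures for some nonnegative $\sigma$, and the probability measures
\[
\nu_i:=\frac{e^{nu_i}}{\int_{\mathbb{S}^n}e^{nu_i}d\mu}=\frac{e^{nm_i\tilde u_i}}{\int_{\mathbb{S}^n}e^{nm_i\tilde u_i}d\mu}
\]
converge weakly-$\ast$ to a probability measure $\nu$ on the compact manifold $\mathbb{S}^n$. Setting $\alpha:=\alpha_n/N_m(\mathbb{S}^n)+\varepsilon$, the contradiction hypothesis reads $\log\int_{\mathbb{S}^n}e^{nm_i\tilde u_i}d\mu\geq\alpha m_i^n$ for $i$ large, so Theorem~\ref{thm2.1} applies and yields distinct points $x_1,\dots,x_N\in\mathbb{S}^n$ and weights $\nu_j\geq 0$ with $\sum_{j=1}^N\nu_j=1$ such that $\nu=\sum_{j=1}^N\nu_j\delta_{x_j}$ and $\sigma(\{x_j\})\geq\alpha_n^{-1}\alpha=1/N_m(\mathbb{S}^n)+\varepsilon/\alpha_n$ for each $j$.

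Since the $x_j$ are distinct and $\sigma(\mathbb{S}^n)=1-\|\nabla\tilde u\|_{L^n}^n\leq 1$,
\[
N\left(\frac{1}{N_m(\mathbb{S}^n)}+\frac{\varepsilon}{\alpha_n}\right)\leq\sum_{j=1}^N\sigma(\{x_j\})\leq\sigma(\mathbb{S}^n)\leq 1,
\]
which forces $N<N_m(\mathbb{S}^n)$. On the other hand, the moment hypothesis $\int_{\mathbb{S}^n}p\,e^{nu_i}d\mu=0$ for $p\in\overset{\circ}{\mathcal{P}}_m$ rewrites as $\int p\,d\nu_i=0$; since each such $p$ is continuous on $\mathbb{S}^n$, passing to the limit gives $\sum_{j=1}^N\nu_j p(x_j)=0$. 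Combined with $\nu_j\geq 0$ and $\sum_j\nu_j=1$, Definition~\ref{def3.1} yields $N\in\mathcal{N}_m(\mathbb{S}^n)$ and hence $N\geq N_m(\mathbb{S}^n)$, a direct contradiction.

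The main difficulty is really absorbed into Theorem~\ref{thm2.1}: the nontrivial input is knowing that the limiting measure $\nu$ is a convex combination of Dirac masses supported exactly at points where $\sigma$ carries an atom of size at least $\alpha_n^{-1}\alpha$. Once that refined concentration-compactness dichotomy is available, the argument balances two constraints on the number of atoms, a mass-budget upper bound forcing $N<N_m(\mathbb{S}^n)$ and a quadrature lower bound forcing $N\geq N_m(\mathbb{S}^n)$, and these are incompatible.
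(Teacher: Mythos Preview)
Your proof is correct and follows essentially the same route as the paper's: argue by contradiction, rescale to obtain a sequence with $\|\nabla\tilde u_i\|_{L^n}=1$, pass to a subsequence, invoke Theorem~\ref{thm2.1} to see that $\nu$ is a finite convex combination of Dirac masses at points where $\sigma$ has atoms of size at least $\alpha_n^{-1}\alpha$, and then play the mass bound $\sigma(\mathbb{S}^n)\leq 1$ against the quadrature constraint $N\in\mathcal N_m(\mathbb{S}^n)$. The only cosmetic difference is that the paper phrases the contradiction as $\alpha\leq \alpha_n/N\leq \alpha_n/N_m(\mathbb{S}^n)$ while you phrase it as $N<N_m(\mathbb{S}^n)\leq N$; your preliminary shrinking of $\varepsilon$ to ensure $\alpha<\alpha_n$ is harmless but not actually needed.
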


\begin{proof}
Let $\alpha =\frac{\alpha _{n}}{N_{m}\left( \mathbb{S}^{n}\right) }%
+\varepsilon $. If the inequality is not true, then there exists $v_{i}\in
W^{1,n}\left( \mathbb{S}^{n}\right) $ such that $\overline{v_{i}}=0$, $\int_{%
\mathbb{S}^{n}}pe^{nv_{i}}d\mu =0$ for all $p\in \overset{\circ }{\mathcal{P}%
}_{m}$ and%
\begin{equation*}
\log \int_{\mathbb{S}^{n}}e^{nv_{i}}d\mu -\alpha \left\Vert \nabla
v_{i}\right\Vert _{L^{n}}^{n}\rightarrow \infty
\end{equation*}%
as $i\rightarrow \infty $. In particular $\int_{\mathbb{S}%
^{n}}e^{nv_{i}}d\mu \rightarrow \infty $. Since%
\begin{equation*}
\log \int_{\mathbb{S}^{n}}e^{nv_{i}}d\mu \leq \alpha _{n}\left\Vert \nabla
v_{i}\right\Vert _{L^{n}}^{n}+c\left( n\right) ,
\end{equation*}%
we see $\left\Vert \nabla v_{i}\right\Vert _{L^{n}}\rightarrow \infty $. Let 
$m_{i}=\left\Vert \nabla v_{i}\right\Vert _{L^{n}}$ and $u_{i}=\frac{v_{i}}{%
m_{i}}$, then $m_{i}\rightarrow \infty $, $\left\Vert \nabla
u_{i}\right\Vert _{L^{n}}=1$ and $\overline{u_{i}}=0$. After passing to a
subsequence, we have%
\begin{eqnarray*}
u_{i} &\rightharpoonup &u\text{ weakly in }W^{1,n}\left( \mathbb{S}%
^{n}\right) ; \\
\log \int_{\mathbb{S}^{n}}e^{nm_{i}u_{i}}d\mu -\alpha m_{i}^{n} &\rightarrow
&\infty ; \\
\left\vert \nabla u_{i}\right\vert ^{n}d\mu &\rightarrow &\left\vert \nabla
u\right\vert ^{n}d\mu +\sigma \text{ as measure;} \\
\frac{e^{nm_{i}u_{i}}}{\int_{\mathbb{S}^{n}}e^{nm_{i}u_{i}}d\mu }
&\rightarrow &\nu \text{ as measure.}
\end{eqnarray*}%
Let%
\begin{equation}
\left\{ x\in \mathbb{S}^{n}:\sigma \left( \left\{ x\right\} \right) \geq
\alpha _{n}^{-1}\alpha \right\} =\left\{ x_{1},\cdots ,x_{N}\right\} ,
\label{eq3.5}
\end{equation}%
then it follows from Theorem \ref{thm2.1} that%
\begin{equation}
\nu =\sum_{i=1}^{N}\nu _{i}\delta _{x_{i}}.  \label{eq3.6}
\end{equation}%
Moreover $\nu _{i}\geq 0$, $\sum_{i=1}^{N}\nu _{i}=1$ and%
\begin{equation*}
\sum_{i=1}^{N}\nu _{i}p\left( x_{i}\right) =0
\end{equation*}%
for any $p\in \overset{\circ }{\mathcal{P}}_{m}$. Hence $N\in \mathcal{N}%
_{m}\left( \mathbb{S}^{n}\right) $. It follows from $\sigma \left( \mathbb{S}%
^{n}\right) \leq 1$ and (\ref{eq3.5}) that%
\begin{equation*}
\alpha _{n}^{-1}\alpha N\leq 1,
\end{equation*}%
We get%
\begin{equation*}
\alpha \leq \frac{\alpha _{n}}{N}\leq \frac{\alpha _{n}}{N_{m}\left( \mathbb{%
S}^{n}\right) }.
\end{equation*}%
This contradicts with the choice of $\alpha $.
\end{proof}

Indeed, what we get from the above argument is the following

\begin{theorem}
\label{thm3.2}If $u\in W^{1,n}\left( \mathbb{S}^{n}\right) $ such that $%
\int_{\mathbb{S}^{n}}ud\mu =0$ (here $\mu $ is the standard measure on $%
\mathbb{S}^{n}$) and for every $p\in \overset{\circ }{\mathcal{P}}_{m}$,%
\begin{equation}
\left\vert \int_{\mathbb{S}^{n}}pe^{nu}d\mu \right\vert \leq b\left(
p\right) ,  \label{eq3.7}
\end{equation}%
here $b\left( p\right) $ is a nonnegative number depending only on $p$, then
for any $\varepsilon >0$,%
\begin{equation}
\log \int_{\mathbb{S}^{n}}e^{nu}d\mu \leq \left( \frac{\alpha _{n}}{%
N_{m}\left( \mathbb{S}^{n}\right) }+\varepsilon \right) \left\Vert \nabla
u\right\Vert _{L^{n}}^{n}+c\left( m,n,b,\varepsilon \right) .  \label{eq3.8}
\end{equation}%
Here $N_{m}\left( \mathbb{S}^{n}\right) $ is defined in Definition \ref%
{def3.1} and%
\begin{equation*}
\alpha _{n}=\left( \frac{n-1}{n}\right) ^{n-1}\frac{1}{\left\vert \mathbb{S}%
^{n-1}\right\vert }.
\end{equation*}
\end{theorem}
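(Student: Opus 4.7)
The plan is to argue by contradiction, exactly parallel to the proof of Theorem~\ref{thm3.1}, and observe that the only role played there by the hypothesis $\int_{\mathbb{S}^n} p e^{nv_i}d\mu = 0$ was to force $\sum_{j=1}^N \nu_j p(x_j) = 0$ in the limit. Under the weaker hypothesis $|\int_{\mathbb{S}^n} p e^{nu}d\mu|\leq b(p)$, the same conclusion persists because the denominator $\int_{\mathbb{S}^n} e^{nv_i} d\mu$ blows up while the numerator stays bounded.

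In more detail, fix $\varepsilon>0$ and set $\alpha = \frac{\alpha_n}{N_m(\mathbb{S}^n)}+\varepsilon$. If (\ref{eq3.8}) fails with any constant $c(m,n,b,\varepsilon)$, then there is a sequence $v_i\in W^{1,n}(\mathbb{S}^n)$ with $\overline{v_i}=0$, $|\int_{\mathbb{S}^n} p e^{nv_i}d\mu|\leq b(p)$ for every $p\in\overset{\circ}{\mathcal{P}}_m$, and
\begin{equation*}
\log \int_{\mathbb{S}^n} e^{nv_i}d\mu - \alpha \|\nabla v_i\|_{L^n}^n \to \infty.
\end{equation*}
The Moser--Trudinger--Onofri inequality (\ref{eq1.5}) forces $\|\nabla v_i\|_{L^n}\to\infty$ and $\int_{\mathbb{S}^n} e^{nv_i}d\mu\to\infty$. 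Set $m_i=\|\nabla v_i\|_{L^n}$ and $u_i=v_i/m_i$, so $\overline{u_i}=0$ and $\|\nabla u_i\|_{L^n}=1$. After passing to a subsequence, $u_i\rightharpoonup u$ weakly in $W^{1,n}(\mathbb{S}^n)$, $|\nabla u_i|^n d\mu \to |\nabla u|^n d\mu + \sigma$, and $e^{nm_i u_i}/\int e^{nm_i u_i}d\mu \to \nu$ as measures, with $\log\int e^{nm_i u_i}d\mu - \alpha m_i^n\to\infty$.

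Now I apply Theorem~\ref{thm2.1} to conclude that, writing $\{x\in\mathbb{S}^n:\sigma(\{x\})\geq \alpha_n^{-1}\alpha\} = \{x_1,\ldots,x_N\}$, we have $\nu = \sum_{j=1}^N \nu_j\delta_{x_j}$ with $\nu_j\geq 0$ and $\sum\nu_j=1$. The key new step is to check that for every $p\in \overset{\circ}{\mathcal{P}}_m$,
\begin{equation*}
\sum_{j=1}^N \nu_j p(x_j) \;=\; \int_{\mathbb{S}^n} p\, d\nu \;=\; \lim_{i\to\infty}\frac{\int_{\mathbb{S}^n} p\, e^{nv_i}d\mu}{\int_{\mathbb{S}^n} e^{nv_i}d\mu} \;=\; 0,
\end{equation*}
where the first equality uses that $p$ is continuous on $\mathbb{S}^n$ so it can be paired against the weak-$\ast$ limit of the probability measures, and the last equality uses $|\int p\,e^{nv_i}d\mu|\leq b(p)$ while $\int e^{nv_i}d\mu\to\infty$. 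Hence $N\in \mathcal{N}_m(\mathbb{S}^n)$, so $N\geq N_m(\mathbb{S}^n)$.

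Finally, from $\sigma(\mathbb{S}^n) + \int|\nabla u|^n d\mu \leq \liminf \|\nabla u_i\|_{L^n}^n = 1$ and $\alpha_n^{-1}\alpha\cdot N\leq \sigma(\mathbb{S}^n)\leq 1$, I obtain $\alpha \leq \alpha_n/N \leq \alpha_n/N_m(\mathbb{S}^n)$, contradicting $\alpha = \alpha_n/N_m(\mathbb{S}^n)+\varepsilon$. Since the proof is a direct translation of the Theorem~\ref{thm3.1} argument, the only genuinely new point one has to be careful about is the limit identification $\int p\, d\nu = 0$ under the relaxed assumption; this is the single place where the hypothesis (\ref{eq3.7}) enters, and since continuous test functions suffice against the weak-$\ast$ measure convergence it presents no real obstacle.
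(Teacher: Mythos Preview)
Your proof is correct and is essentially the same argument the paper intends: the paper simply remarks ``Indeed, what we get from the above argument is the following'' and gives no separate proof of Theorem~\ref{thm3.2}, while the explicit version written out for the higher-order analogue (Theorem~\ref{thm4.2}) matches your argument line by line, including the one new observation that $\left|\int p\,e^{nv_i}\,d\mu\right|\leq b(p)$ together with $\int e^{nv_i}\,d\mu\to\infty$ forces $\int p\,d\nu=0$.
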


We remark that the constant $\frac{\alpha _{n}}{N_{m}\left( \mathbb{S}%
^{n}\right) }+\varepsilon $ is almost optimal in the following sense: If $%
a\geq 0$ and $c\in \mathbb{R}$ such that for any $u\in W^{1,n}\left( \mathbb{%
S}^{n}\right) $ with $\overline{u}=0$ and $\int_{\mathbb{S}^{n}}pe^{nu}d\mu
=0$ for every $p\in \overset{\circ }{\mathcal{P}}_{m}$, we have%
\begin{equation}
\log \int_{\mathbb{S}^{n}}e^{nu}d\mu \leq a\left\Vert \nabla u\right\Vert
_{L^{n}}^{n}+c,  \label{eq3.9}
\end{equation}%
then $a\geq \frac{\alpha _{n}}{N_{m}\left( \mathbb{S}^{n}\right) }$. This
claim can be proved almost the same way as the argument in \cite[Lemma 3.1]%
{CH}. For reader's convenience we sketch the proof here.

First we note that we can rewrite the assumption as for any $u\in
W^{1,n}\left( \mathbb{S}^{n}\right) $ with $\int_{\mathbb{S}^{n}}pe^{nu}d\mu
=0$ for every $p\in \overset{\circ }{\mathcal{P}}_{m}$, we have%
\begin{equation}
\log \int_{\mathbb{S}^{n}}e^{nu}d\mu \leq a\left\Vert \nabla u\right\Vert
_{L^{n}}^{n}+n\overline{u}+c.  \label{eq3.10}
\end{equation}

Assume $N\in \mathbb{N}$, $x_{1},\cdots ,x_{N}\in \mathbb{S}^{n}$ and $\nu
_{1},\cdots ,\nu _{N}\in \left[ 0,\infty \right) $ s.t. $\nu _{1}+\cdots
+\nu _{N}=1$ and for any $p\in \overset{\circ }{\mathcal{P}}_{m}$, $\nu
_{1}p\left( x_{1}\right) +\cdots +\nu _{N}p\left( x_{N}\right) =0$. We will
prove $a\geq \frac{\alpha _{n}}{N}$. The above remark follows. Without
losing of generality we can assume $\nu _{i}>0$ for $1\leq i\leq N$ and $%
x_{i}\neq x_{j}$ for $1\leq i<j\leq N$.

For $x,y\in \mathbb{S}^{n}$, we denote $\overline{xy}$ as the geodesic
distance between $x$ and $y$ on $\mathbb{S}^{n}$. For $r>0$ and $x\in 
\mathbb{S}^{n}$, we denote $B_{r}\left( x\right) $ as the geodesic ball with
radius $r$ and center $x$ i.e. $B_{r}\left( x\right) =\left\{ y\in \mathbb{S}%
^{n}:\overline{xy}<r\right\} $.

Let $\delta >0$ be small enough such that for $1\leq i<j\leq N$, $\overline{%
B_{2\delta }\left( x_{i}\right) }\cap \overline{B_{2\delta }\left(
x_{j}\right) }=\emptyset $. For $0<\varepsilon <\delta $, we let%
\begin{equation}
\phi _{\varepsilon }\left( t\right) =\left\{ 
\begin{array}{cc}
\frac{n}{n-1}\log \frac{\delta }{\varepsilon }, & 0<t<\varepsilon ; \\ 
\frac{n}{n-1}\log \frac{\delta }{t}, & \varepsilon <t<\delta ; \\ 
0, & t>\delta .%
\end{array}%
\right.  \label{eq3.11}
\end{equation}%
If $b\in \mathbb{R}$, then we write%
\begin{equation}
\phi _{\varepsilon ,b}\left( t\right) =\left\{ 
\begin{array}{cc}
\phi _{\varepsilon }\left( t\right) +b, & 0<t<\delta ; \\ 
b\left( 2-\frac{t}{\delta }\right) , & \delta <t<2\delta ; \\ 
0, & t>2\delta .%
\end{array}%
\right.  \label{eq3.12}
\end{equation}

Let%
\begin{equation}
v\left( x\right) =\sum_{i=1}^{N}\phi _{\varepsilon ,\frac{1}{n}\log \nu
_{i}}\left( \overline{xx_{i}}\right) ,  \label{eq3.13}
\end{equation}%
then calculation shows%
\begin{equation}
\int_{\mathbb{S}^{n}}e^{nv}d\mu =\left\vert \mathbb{S}^{n-1}\right\vert
\delta ^{\frac{n^{2}}{n-1}}\varepsilon ^{-\frac{n}{n-1}}+O\left( \log \frac{1%
}{\varepsilon }\right)  \label{eq3.14}
\end{equation}%
as $\varepsilon \rightarrow 0^{+}$.

For $p\in \overset{\circ }{\mathcal{P}}_{m}$, using%
\begin{equation}
\dsum\limits_{i=1}^{N}\nu _{i}p\left( x_{i}\right) =0,  \label{eq3.15}
\end{equation}%
we can show%
\begin{equation}
\int_{\mathbb{S}^{n}}e^{nv}pd\mu =O\left( \log \frac{1}{\varepsilon }\right)
\label{eq3.16}
\end{equation}%
as $\varepsilon \rightarrow 0^{+}$.

To get a test function satisfying orthogornality condition, we need to do
some corrections. Let us fix a base of $\left. \overset{\circ }{\mathcal{P}}%
_{m}\right\vert _{\mathbb{S}^{n}}$, namely $\left. p_{1}\right\vert _{%
\mathbb{S}^{n}},\cdots ,\left. p_{l}\right\vert _{\mathbb{S}^{n}}$, here $%
p_{1},\cdots ,p_{l}\in \overset{\circ }{\mathcal{P}}_{m}$. We first claim
that there exists $\psi _{1},\cdots ,\psi _{l}\in C_{c}^{\infty }\left( 
\mathbb{S}^{n}\backslash \dbigcup\limits_{i=1}^{N}\overline{B_{2\delta
}\left( x_{i}\right) }\right) $ such that the determinant%
\begin{equation}
\det \left[ \int_{\mathbb{S}^{n}}\psi _{j}p_{k}d\mu \right] _{1\leq j,k\leq
l}\neq 0.  \label{eq3.17}
\end{equation}%
Indeed, fix a nonzero smooth function $\eta \in C_{c}^{\infty }\left( 
\mathbb{S}^{n}\backslash \dbigcup\limits_{i=1}^{N}\overline{B_{2\delta
}\left( x_{i}\right) }\right) $, then $\eta p_{1},\cdots ,\eta p_{l}$ are
linearly independent. It follows that the matrix%
\begin{equation*}
\left[ \int_{\mathbb{S}^{n}}\eta ^{2}p_{j}p_{k}d\mu \right] _{1\leq j,k\leq
l}
\end{equation*}%
is positive definite. Then $\psi _{j}=\eta ^{2}p_{j}$ satisfies the claim.

It follows from (\ref{eq3.17}) that we can find $\beta _{1},\cdots ,\beta
_{l}\in \mathbb{R}$ such that%
\begin{equation}
\int_{\mathbb{S}^{n}}\left( e^{nv}+\sum_{j=1}^{l}\beta _{j}\psi _{j}\right)
p_{k}d\mu =0  \label{eq3.18}
\end{equation}%
for $k=1,\cdots ,l$. Moreover%
\begin{equation}
\beta _{j}=O\left( \log \frac{1}{\varepsilon }\right)  \label{eq3.19}
\end{equation}%
as $\varepsilon \rightarrow 0^{+}$. As a consequence we can find a constant $%
c_{1}>0$ such that%
\begin{equation}
\sum_{j=1}^{l}\beta _{j}\psi _{j}+c_{1}\log \frac{1}{\varepsilon }\geq \log 
\frac{1}{\varepsilon }.  \label{eq3.20}
\end{equation}%
We define $u$ as%
\begin{equation}
e^{nu}=e^{nv}+\sum_{j=1}^{l}\beta _{j}\psi _{j}+c_{1}\log \frac{1}{%
\varepsilon }.  \label{eq3.21}
\end{equation}%
Note this $u$ will be the test function we use to prove our remark.

It follows from (\ref{eq3.18}) that $\int_{\mathbb{S}^{n}}e^{nu}pd\mu =0$
for all $p\in \overset{\circ }{\mathcal{P}}_{m}$. Moreover using (\ref%
{eq3.14}) and (\ref{eq3.19}) we see%
\begin{eqnarray}
\int_{\mathbb{S}^{n}}e^{nu}d\mu &=&\left\vert \mathbb{S}^{n-1}\right\vert
\delta ^{\frac{n^{2}}{n-1}}\varepsilon ^{-\frac{n}{n-1}}+O\left( \log \frac{1%
}{\varepsilon }\right)  \label{eq3.22} \\
&=&\left\vert \mathbb{S}^{n-1}\right\vert \delta ^{\frac{n^{2}}{n-1}%
}\varepsilon ^{-\frac{n}{n-1}}\left( 1+o\left( 1\right) \right) ,  \notag
\end{eqnarray}%
hence%
\begin{equation}
\log \int_{\mathbb{S}^{n}}e^{nu}d\mu =\frac{n}{n-1}\log \frac{1}{\varepsilon 
}+O\left( 1\right)  \label{eq3.23}
\end{equation}%
as $\varepsilon \rightarrow 0^{+}$. Calculation shows%
\begin{equation}
\overline{u}=o\left( \log \frac{1}{\varepsilon }\right)  \label{eq3.24}
\end{equation}%
and%
\begin{equation}
\int_{\mathbb{S}^{n}}\left\vert \nabla u\right\vert ^{n}d\mu =\left( \frac{n%
}{n-1}\right) ^{n}\left\vert \mathbb{S}^{n-1}\right\vert N\log \frac{1}{%
\varepsilon }+o\left( \log \frac{1}{\varepsilon }\right) .  \label{eq3.25}
\end{equation}%
We plug $u$ into (\ref{eq3.10}) and get%
\begin{equation*}
\frac{n}{n-1}\log \frac{1}{\varepsilon }\leq \left( \frac{n}{n-1}\right)
^{n}\left\vert \mathbb{S}^{n-1}\right\vert Na\log \frac{1}{\varepsilon }%
+o\left( \log \frac{1}{\varepsilon }\right) .
\end{equation*}%
Divide $\log \frac{1}{\varepsilon }$ on both sides and let $\varepsilon
\rightarrow 0^{+}$, we see $a\geq \frac{\alpha _{n}}{N}$.

It is clear that $N_{1}\left( \mathbb{S}^{n}\right) =2$. Hence Aubin's
theorem \cite[Corollary 2 on p159]{A} follows from Theorem \ref{thm3.1}.

\begin{lemma}
\label{lem3.1}$N_{2}\left( \mathbb{S}^{n}\right) =n+2$.
\end{lemma}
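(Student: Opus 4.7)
The plan is to establish both bounds $N_2(\mathbb{S}^n)\le n+2$ and $N_2(\mathbb{S}^n)\ge n+2$ by direct linear-algebraic arguments based on the two defining conditions of $\mathcal{N}_2(\mathbb{S}^n)$, namely $\sum_{i}\nu_i x_i = 0$ (from testing against linear polynomials) and $\sum_{i}\nu_i x_i x_i^T = \frac{1}{n+1}I_{n+1}$ (from testing against the quadratic monomials $x_jx_k$, using that $\frac{1}{|\mathbb{S}^n|}\int_{\mathbb{S}^n}x_jx_k\,d\mu = \frac{1}{n+1}\delta_{jk}$, which follows from $\sum_j x_j^2=1$ on $\mathbb{S}^n$ and rotational symmetry).

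For the upper bound I would exhibit the regular simplex. Choose $n+2$ unit vectors $x_1,\ldots,x_{n+2}\in\mathbb{S}^n$ that are the vertices of a regular simplex (equivalently, $\langle x_i,x_j\rangle = -\frac{1}{n+1}$ for $i\neq j$ and $\sum_i x_i=0$), and take $\nu_i = \frac{1}{n+2}$. The linear condition is immediate from $\sum_i x_i = 0$. For the quadratic condition, the matrix $A = \sum_i x_i x_i^T$ commutes with every element of the symmetry group of the simplex, which acts irreducibly on $\mathbb{R}^{n+1}$, so $A$ is a scalar multiple of the identity; taking traces gives $A=\frac{n+2}{n+1}I_{n+1}$, so $\frac{1}{n+2}A = \frac{1}{n+1}I_{n+1}$, as required. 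This shows $n+2\in\mathcal{N}_2(\mathbb{S}^n)$.

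For the lower bound, suppose $N\in\mathcal{N}_2(\mathbb{S}^n)$ with witnesses $x_1,\ldots,x_N\in\mathbb{S}^n$ and $\nu_1,\ldots,\nu_N\ge 0$, $\sum\nu_i=1$. The identity $\sum_i \nu_i x_i x_i^T = \frac{1}{n+1}I_{n+1}$ forces the left-hand side to have rank $n+1$; but it is a nonnegative combination of $N$ rank-one matrices, so $N\ge n+1$. To exclude $N=n+1$, write $y_i = \sqrt{\nu_i}\,x_i$; then $\sum_i y_i y_i^T = \frac{1}{n+1}I_{n+1}$ says that the $(n+1)\times(n+1)$ matrix $Y$ with columns $y_i$ satisfies $YY^T = \frac{1}{n+1}I$, so $Y$ is (up to a scalar) orthogonal. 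Consequently $\nu_i|x_i|^2=\nu_i=\frac{1}{n+1}$ for each $i$, and the $x_i$ form an orthonormal basis of $\mathbb{R}^{n+1}$. But then
\[
\Bigl|\sum_{i=1}^{n+1} x_i\Bigr|^2 = \sum_{i=1}^{n+1}|x_i|^2 + 2\sum_{i<j}\langle x_i,x_j\rangle = n+1,
\]
contradicting the linear condition $\sum_i \nu_i x_i = \frac{1}{n+1}\sum_i x_i = 0$.

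The main (mild) obstacle is the $N=n+1$ case: the rank argument alone only yields $N\ge n+1$, and one must extract from the quadratic identity that the $x_i$ must be mutually orthogonal to combine incompatibly with the linear moment vanishing; the computation of $|\sum x_i|^2$ above is exactly what makes the two first-order conditions inconsistent. Everything else is a short check.
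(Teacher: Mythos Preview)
Your proof is correct, and it reaches the same conclusion as the paper via somewhat different mechanics on both halves.

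For the upper bound, both you and the paper use the vertices of a regular $(n+1)$-simplex with equal weights. The paper verifies the quadrature identity by an explicit moment calculation (its Lemma~3.2): it realizes the simplex vertices as the standard basis $e_1,\ldots,e_{n+2}$ in $\mathbb{R}^{n+2}$, computes $\frac{1}{|\Sigma|}\int_\Sigma x_i\,dS$, $\frac{1}{|\Sigma|}\int_\Sigma x_i^2\,dS$, $\frac{1}{|\Sigma|}\int_\Sigma x_i x_j\,dS$ on the circumscribed sphere $\Sigma$, and then transports this to $\mathbb{S}^n$ by dilation, translation and an orthogonal map. Your Schur's lemma argument (the symmetry group of the simplex acts irreducibly on $\mathbb{R}^{n+1}$, so $\sum_i x_i x_i^T$ is scalar) is more conceptual and avoids the auxiliary lemma, at the cost of invoking a representation-theoretic fact.

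For the lower bound, the paper's argument is a single step covering all $N\le n+1$ at once: from $\sum_i \nu_i x_i=0$ one gets $\dim\operatorname{span}\{x_1,\ldots,x_N\}\le N-1\le n$, so there is a nonzero $\xi$ orthogonal to every $x_i$, and testing against $p(x)=(\xi\cdot x)^2$ yields $0=\frac{1}{|\mathbb{S}^n|}\int_{\mathbb{S}^n}(\xi\cdot x)^2\,d\mu>0$. Your route instead uses only the quadratic identity $\sum_i\nu_i x_ix_i^T=\frac{1}{n+1}I$ to get $N\ge n+1$ by rank, and then spends a second step on $N=n+1$ via the $YY^T=Y^TY$ trick to force the $x_i$ to be orthonormal, which is then incompatible with $\sum_i x_i=0$. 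Both are valid; the paper's version is shorter because it folds the linear moment condition in at the outset rather than saving it for the borderline case.
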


\begin{corollary}
\label{cor3.1}Assume $u\in W^{1,n}\left( \mathbb{S}^{n}\right) $ such that $%
\int_{\mathbb{S}^{n}}ud\mu =0$ (here $\mu $ is the standard measure on $%
\mathbb{S}^{n}$) and for every $p\in \overset{\circ }{\mathcal{P}}_{2}$,%
\begin{equation*}
\left\vert \int_{\mathbb{S}^{n}}pe^{nu}d\mu \right\vert \leq b\left(
p\right) ,
\end{equation*}%
here $b\left( p\right) $ is a nonnegative number depending only on $p$, then
for any $\varepsilon >0$, we have%
\begin{equation}
\log \int_{\mathbb{S}^{n}}e^{nu}d\mu \leq \left( \frac{\alpha _{n}}{n+2}%
+\varepsilon \right) \left\Vert \nabla u\right\Vert _{L^{n}}^{n}+c\left(
n,b,\varepsilon \right) .  \label{eq3.26}
\end{equation}
\end{corollary}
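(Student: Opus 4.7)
The plan is to derive Corollary 3.1 as a direct consequence of Theorem 3.2 combined with Lemma 3.1, since it is essentially the special case $m=2$ of Theorem 3.2 with the value of $N_m(\mathbb{S}^n)$ explicitly inserted. I would not attempt a new argument; instead, I would simply invoke the already-established general inequality (\ref{eq3.8}) and substitute the known combinatorial constant.

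Concretely, the first step is to verify that the hypotheses of Corollary 3.1 are exactly those of Theorem 3.2 with $m=2$. The function $u \in W^{1,n}(\mathbb{S}^n)$ satisfies $\int_{\mathbb{S}^n} u \, d\mu = 0$, and for every $p \in \overset{\circ}{\mathcal{P}}_2$ we have a uniform bound $|\int_{\mathbb{S}^n} p e^{nu} d\mu| \leq b(p)$ depending only on $p$. These are precisely the assumptions of Theorem 3.2 with $m=2$, so its conclusion applies: for any $\varepsilon > 0$,
\begin{equation*}
\log \int_{\mathbb{S}^n} e^{nu} d\mu \leq \left(\frac{\alpha_n}{N_2(\mathbb{S}^n)} + \varepsilon\right) \|\nabla u\|_{L^n}^n + c(2, n, b, \varepsilon).
\end{equation*}

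The second and only remaining step is to substitute the explicit value $N_2(\mathbb{S}^n) = n+2$ provided by Lemma 3.1. This immediately yields
\begin{equation*}
\log \int_{\mathbb{S}^n} e^{nu} d\mu \leq \left(\frac{\alpha_n}{n+2} + \varepsilon\right) \|\nabla u\|_{L^n}^n + c(n, b, \varepsilon),
\end{equation*}
where the constant $c(n, b, \varepsilon)$ is obtained by absorbing the fixed parameter $m=2$ into the constant. This completes the argument.

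There is no real obstacle here, since all the substantial work is done in Theorem 3.2 (concentration compactness plus Young's inequality, as established in Section 2) and in Lemma 3.1 (the cubature-node count). The corollary is a bookkeeping step. The only thing one might want to double-check is that Theorem 3.2's statement indeed covers the case where the orthogonality condition is relaxed to a bound $b(p)$ rather than exact vanishing, which it does by construction (the hypothesis (\ref{eq3.7}) is precisely this relaxation), and that the constant's dependence on $b$ is correctly tracked through the proof; both are already built into the statement of Theorem 3.2.
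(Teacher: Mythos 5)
Your proposal is correct and matches the paper's (implicit) argument exactly: Corollary~\ref{cor3.1} is simply Theorem~\ref{thm3.2} with $m=2$, combined with the value $N_{2}\left( \mathbb{S}^{n}\right) =n+2$ from Lemma~\ref{lem3.1}. The paper does not even spell out a separate proof, precisely because it is this bookkeeping substitution.
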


We will prove Lemma \ref{lem3.1} after some preparations. In $\mathbb{R}^{N}$%
, we have the hyperplane%
\begin{equation}
H=\left\{ x\in \mathbb{R}^{N}:x_{1}+\cdots +x_{N}=1\right\} .  \label{eq3.27}
\end{equation}%
Here $x_{1},\cdots ,x_{N}$ are the coordinates of $x$. Let $e_{1},\cdots
,e_{N}$ be the standard base of $\mathbb{R}^{N}$ and%
\begin{equation}
y=\frac{1}{N}\left( e_{1}+\cdots +e_{N}\right) .  \label{eq3.28}
\end{equation}%
We denote%
\begin{equation}
\Sigma =\left\{ x\in H:\left\vert x-y\right\vert =\sqrt{\frac{N-1}{N}}%
\right\} .  \label{eq3.29}
\end{equation}%
Note that $\Sigma $ is $N-2$ dimensional sphere with radius $\sqrt{\frac{N-1%
}{N}}$.

\begin{lemma}
\label{lem3.2}For any $p\in \mathcal{P}_{2}\left( \mathbb{R}^{N}\right) $,
we have%
\begin{equation}
\frac{1}{\left\vert \Sigma \right\vert }\int_{\Sigma }pdS=\frac{1}{N}%
\sum_{i=1}^{N}p\left( e_{i}\right) .  \label{eq3.30}
\end{equation}%
Here $dS$ is the standard measure on $\Sigma $, and $\left\vert \Sigma
\right\vert $ is the measure of $\Sigma $ under $dS$.
\end{lemma}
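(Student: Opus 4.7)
The plan is to exploit linearity in $p$ together with elementary identities about second moments on spheres. Since both sides of the claimed identity are linear functionals on $\mathcal{P}_2(\mathbb{R}^N)$, it suffices to verify the equality on a spanning set: the constant $1$, the coordinate functions $x_k$, and the quadratic monomials $x_i x_j$.

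First I would verify the basic geometric fact that each $e_k$ lies on $\Sigma$, by a direct computation:
\begin{equation*}
|e_k - y|^2 = \Bigl(1 - \tfrac{1}{N}\Bigr)^2 + (N-1)\cdot\tfrac{1}{N^2} = \tfrac{N-1}{N}.
\end{equation*}
In particular $y$ is the centroid of $\{e_1,\dots,e_N\}$. The constant case $p=1$ is trivial. For $p = x_k$, the LHS equals the $k$-th coordinate of the centroid of $\Sigma$, which is $y_k = 1/N$, while the RHS is $\frac{1}{N}\sum_i (e_i)_k = 1/N$.

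The heart of the matter is the quadratic case $p = x_i x_j$. I would reduce to the centered sphere $\Sigma - y$ by writing $x_i x_j = y_i y_j + y_i(x_j-y_j) + y_j(x_i-y_i) + (x_i-y_i)(x_j-y_j)$; the two linear terms integrate to zero on $\Sigma$ because $y$ is its center. The remaining task is to compute
\begin{equation*}
M_{ij} := \frac{1}{|\Sigma|}\int_\Sigma (x_i-y_i)(x_j-y_j)\, dS.
\end{equation*}
The sphere $\Sigma - y$ of radius $r = \sqrt{(N-1)/N}$ sits in the linear subspace $V = H - y$, which is the hyperplane orthogonal to $(1,\dots,1)$, so the orthogonal projection onto $V$ is $P_{ij} = \delta_{ij} - \frac{1}{N}$. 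By orthogonal symmetry of the round sphere inside $V$, the matrix $M$ must be a scalar multiple of $P$, and tracing against $P$ gives $\mathrm{tr}(M) = r^2$ while $\mathrm{tr}(P) = N-1$, so $M_{ij} = \frac{r^2}{N-1}P_{ij} = \frac{1}{N}\bigl(\delta_{ij} - \frac{1}{N}\bigr)$. Substituting back:
\begin{equation*}
\frac{1}{|\Sigma|}\int_\Sigma x_i x_j\, dS = \tfrac{1}{N^2} + \tfrac{1}{N}\delta_{ij} - \tfrac{1}{N^2} = \tfrac{\delta_{ij}}{N} = \tfrac{1}{N}\sum_{k=1}^N (e_k)_i (e_k)_j,
\end{equation*}
which matches the RHS exactly.

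The only nontrivial step is the second-moment computation $M_{ij} = \frac{r^2}{N-1}P_{ij}$, but this is the standard symmetry argument: restrict to $V$, apply orthogonal invariance of surface measure on a round sphere in $V$ to deduce that $M$ is proportional to the identity on $V$ (equivalently, to $P$), then fix the constant by taking the trace using $|x-y|^2 = r^2$ on $\Sigma$. All other steps are direct computation, so I do not anticipate any real obstacle.
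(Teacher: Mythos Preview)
Your proof is correct and follows essentially the same approach as the paper: reduce by linearity to the monomials $1$, $x_k$, $x_ix_j$, then use the symmetry of $\Sigma$ together with the constraints $\sum x_i = 1$ and $|x-y|^2 = (N-1)/N$ to pin down each integral. The only cosmetic difference is that you package the quadratic case via the second-moment matrix and full orthogonal invariance of the sphere in $V$, whereas the paper uses permutation symmetry of the coordinates and computes $\int x_i^2$ and $\int x_ix_j$ separately.
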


\begin{proof}
For $1\leq i\leq N$, we have%
\begin{equation*}
\frac{1}{\left\vert \Sigma \right\vert }\int_{\Sigma }x_{i}dS=\frac{1}{%
\left\vert \Sigma \right\vert }\int_{\Sigma }x_{1}dS.
\end{equation*}%
Moreover%
\begin{equation*}
\frac{1}{\left\vert \Sigma \right\vert }\int_{\Sigma }\left( x_{1}+\cdots
+x_{N}\right) dS=1,
\end{equation*}%
hence $\frac{1}{\left\vert \Sigma \right\vert }\int_{\Sigma }x_{1}dS=\frac{1%
}{N}$. It follows that for $1\leq i\leq N$,%
\begin{equation}
\frac{1}{\left\vert \Sigma \right\vert }\int_{\Sigma }x_{i}dS=\frac{1}{N}.
\label{eq3.31}
\end{equation}

To continue we observe that for $1\leq i\leq N$,%
\begin{equation*}
\frac{1}{\left\vert \Sigma \right\vert }\int_{\Sigma }x_{i}^{2}dS=\frac{1}{%
\left\vert \Sigma \right\vert }\int_{\Sigma }x_{1}^{2}dS.
\end{equation*}%
On the other hand,%
\begin{equation*}
\frac{1}{\left\vert \Sigma \right\vert }\int_{\Sigma }\sum_{i=1}^{N}\left(
x_{i}-\frac{1}{N}\right) ^{2}dS=\frac{N-1}{N},
\end{equation*}%
developing the identity out we get for $1\leq i\leq N$,%
\begin{equation}
\frac{1}{\left\vert \Sigma \right\vert }\int_{\Sigma }x_{i}^{2}dS=\frac{1}{N}%
.  \label{eq3.32}
\end{equation}%
At last we claim for $1\leq i<j\leq N$,%
\begin{equation}
\frac{1}{\left\vert \Sigma \right\vert }\int_{\Sigma }x_{i}x_{j}dS=0.
\label{eq3.33}
\end{equation}%
Indeed this follows from%
\begin{eqnarray*}
\frac{1}{\left\vert \Sigma \right\vert }\int_{\Sigma }x_{i}x_{j}dS &=&\frac{1%
}{\left\vert \Sigma \right\vert }\int_{\Sigma }x_{1}x_{2}dS; \\
\frac{1}{\left\vert \Sigma \right\vert }\int_{\Sigma }\left( x_{1}+\cdots
+x_{N}\right) ^{2}dS &=&1,
\end{eqnarray*}%
and (\ref{eq3.32}). Lemma \ref{lem3.2} follows from (\ref{eq3.31}), (\ref%
{eq3.32}) and (\ref{eq3.33}).
\end{proof}

Now we are ready to prove Lemma \ref{lem3.1}.

\begin{proof}[Proof of Lemma \protect\ref{lem3.1}]
Assume $N\in \mathcal{N}_{2}\left( \mathbb{S}^{n}\right) $, we claim $N\geq
n+2$. If this is not the case, then $N<n+2$. We can find $x_{1},\cdots
,x_{N}\in \mathbb{S}^{n}$, $\nu _{1},\cdots ,\nu _{N}\geq 0$ such that%
\begin{equation*}
\nu _{1}p\left( x_{1}\right) +\cdots +\nu _{N}p\left( x_{N}\right) =\frac{1}{%
\left\vert \mathbb{S}^{n}\right\vert }\int_{\mathbb{S}^{n}}pd\mu
\end{equation*}%
for every $p\in \mathcal{P}_{2}\left( \mathbb{R}^{n+1}\right) $. In
particular%
\begin{eqnarray*}
\nu _{1}+\cdots +\nu _{N} &=&1; \\
\nu _{1}x_{1}+\cdots +\nu _{N}x_{N} &=&0.
\end{eqnarray*}%
Let%
\begin{equation*}
V=\limfunc{span}\left\{ x_{1},\cdots ,x_{N}\right\} ,
\end{equation*}%
then $\dim V\leq N-1\leq n$. Hence we can find a nonzero vector $\xi \in
V^{\perp }$. Let $p\left( x\right) =\left( \xi \cdot x\right) ^{2}$, then%
\begin{equation*}
0=\nu _{1}p\left( x_{1}\right) +\cdots +\nu _{N}p\left( x_{N}\right) =\frac{1%
}{\left\vert \mathbb{S}^{n}\right\vert }\int_{\mathbb{S}^{n}}pd\mu >0.
\end{equation*}%
A contradiction. Hence $N_{2}\left( \mathbb{S}^{n}\right) \geq n+2$.

On the other hand, it follows from Lemma \ref{lem3.2} and dilation,
translation and orthogornal transformation that the vertex of a regular $%
\left( n+1\right) $-simplex embedded in the unit ball, namely $x_{1},\cdots
,x_{n+2}$, satisfies%
\begin{equation*}
\frac{1}{n+2}p\left( x_{1}\right) +\cdots +\frac{1}{n+2}p\left(
x_{n+2}\right) =\frac{1}{\left\vert \mathbb{S}^{n}\right\vert }\int_{\mathbb{%
S}^{n}}pd\mu
\end{equation*}%
for every $p\in \mathcal{P}_{2}\left( \mathbb{R}^{n+1}\right) $. Hence $%
N_{2}\left( \mathbb{S}^{n}\right) \leq n+2$.
\end{proof}

\section{Further discussions\label{sec4}}

In this section we show that our approach above can be modified without much
effort for higher order Sobolev spaces and Sobolev spaces on surfaces with
nonempty boundary. For reader's convenience we carefully write down theorems
in every case considered, although they seem a little repetitive. Another
reason is these statements are of interest themselves.

\subsection{$W^{s,\frac{n}{s}}\left( M^{n}\right) $ for even $s$\label%
{sec4.1}}

Recall $\left( M^{n},g\right) $ is a smooth compact Riemannian manifold with
dimension $n$. Let $s\in \mathbb{N}$ be an even number strictly less than $n$%
, we have the usual Sobolev space $W^{s,\frac{n}{s}}\left( M\right) $ with
norm%
\begin{equation}
\left\Vert u\right\Vert _{W^{s,\frac{n}{s}}\left( M\right) }=\left(
\sum_{k=0}^{s}\int_{M}\left\vert D^{k}u\right\vert ^{\frac{n}{s}}d\mu
\right) ^{\frac{s}{n}}  \label{eq4.1}
\end{equation}%
for $u\in W^{s,\frac{n}{s}}\left( M\right) $. Here $D^{k}$ is the
differentiation associated with the natural connection of $g$. Standard
elliptic theory tells us%
\begin{equation}
\left\Vert u\right\Vert _{W^{s,\frac{n}{s}}}\leq c\left( M,g\right) \left(
\left\Vert \Delta ^{\frac{s}{2}}u\right\Vert _{L^{\frac{n}{s}}}+\left\Vert
u\right\Vert _{L^{\frac{n}{s}}}\right) .  \label{eq4.2}
\end{equation}%
We also have the Poincare inequality%
\begin{equation}
\left\Vert u-\overline{u}\right\Vert _{L^{\frac{n}{s}}}\leq c\left(
M,g\right) \left\Vert \Delta ^{\frac{s}{2}}u\right\Vert _{L^{\frac{n}{s}}}
\label{eq4.3}
\end{equation}%
for $u\in W^{s,\frac{n}{s}}\left( M\right) $.

Let%
\begin{equation}
a_{s,n}=\frac{n}{\left\vert \mathbb{S}^{n-1}\right\vert }\left( \frac{\pi ^{%
\frac{n}{2}}2^{s}\Gamma \left( \frac{s}{2}\right) }{\Gamma \left( \frac{n-s}{%
2}\right) }\right) ^{\frac{n}{n-s}}.  \label{eq4.4}
\end{equation}%
Here $\Gamma $ is the usual Gamma function i.e.%
\begin{equation}
\Gamma \left( \alpha \right) =\int_{0}^{\infty }t^{\alpha -1}e^{-t}dt
\label{eq4.5}
\end{equation}%
for $\alpha >0$. The Moser-Trudinger inequality (see \cite{BCY, F}) tells us
that for every $u\in W^{s,\frac{n}{s}}\left( M\right) \backslash \left\{
0\right\} $ with $\overline{u}=0$, we have%
\begin{equation}
\int_{M}\exp \left( a_{s,n}\frac{\left\vert u\right\vert ^{\frac{n}{n-s}}}{%
\left\Vert \Delta ^{\frac{s}{2}}u\right\Vert _{L^{\frac{n}{s}}}^{\frac{n}{n-s%
}}}\right) d\mu \leq c\left( M,g\right) .  \label{eq4.6}
\end{equation}%
It follows from (\ref{eq4.6}) and Young's inequality that for any $u\in W^{s,%
\frac{n}{s}}\left( M\right) $ with $\overline{u}=0$, we have the
Moser-Trudinger-Onofri inequality%
\begin{equation}
\log \int_{M}e^{nu}d\mu \leq \alpha _{s,n}\left\Vert \Delta ^{\frac{s}{2}%
}u\right\Vert _{L^{\frac{n}{s}}}^{\frac{n}{s}}+c_{1}\left( M,g\right) .
\label{eq4.7}
\end{equation}%
Here%
\begin{equation}
\alpha _{s,n}=s\left( \frac{n-s}{n}\left\vert \mathbb{S}^{n-1}\right\vert
\right) ^{\frac{n-s}{s}}\left( \frac{\Gamma \left( \frac{n-s}{2}\right) }{%
\pi ^{\frac{n}{2}}2^{s}\Gamma \left( \frac{s}{2}\right) }\right) ^{\frac{n}{s%
}}.  \label{eq4.8}
\end{equation}

Again we start from a basic qualitative property of functions in $W^{s,\frac{%
n}{s}}\left( M\right) $.

\begin{lemma}
\label{lem4.1}For any $u\in W^{s,\frac{n}{s}}\left( M\right) $ and $a>0$, we
have%
\begin{equation}
\int_{M}e^{a\left\vert u\right\vert ^{\frac{n}{n-s}}}d\mu <\infty .
\label{eq4.9}
\end{equation}
\end{lemma}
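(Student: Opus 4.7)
The plan is to mimic the proof of Lemma 2.1 almost verbatim, replacing $\nabla$ by $\Delta^{s/2}$, the exponent $n/(n-1)$ by $n/(n-s)$, and the first-order Moser--Trudinger inequality (\ref{eq1.4}) by its higher-order counterpart (\ref{eq4.6}).

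First I would reduce to the case that $u$ is unbounded and pick a small $\varepsilon>0$ (to be chosen depending on $a$). By density of $C^\infty(M)$ in $W^{s,n/s}(M)$, choose $v\in C^\infty(M)$ with $\|u-v\|_{W^{s,n/s}(M)}<\varepsilon$ and set $w=u-v$, so that
\begin{equation*}
u = v + \overline{w} + (w - \overline{w}),\qquad |u| \le \|v\|_{L^\infty} + |\overline{w}| + |w - \overline{w}|.
\end{equation*}
Using the elementary convexity inequality $(a+b)^{n/(n-s)}\le 2^{s/(n-s)}(a^{n/(n-s)}+b^{n/(n-s)})$ (valid because $n/(n-s)\ge 1$), this gives
\begin{equation*}
|u|^{\frac{n}{n-s}} \le 2^{\frac{s}{n-s}}\bigl(\|v\|_{L^\infty}+|\overline{w}|\bigr)^{\frac{n}{n-s}} + 2^{\frac{s}{n-s}} |w-\overline{w}|^{\frac{n}{n-s}}.
\end{equation*}

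Next I would exploit the fact that $w-\overline{w}$ has zero mean and that, by (\ref{eq4.2}), $\|\Delta^{s/2}w\|_{L^{n/s}}\le \|w\|_{W^{s,n/s}}<\varepsilon$. Taking $\varepsilon$ small enough that
\begin{equation*}
2^{\frac{s}{n-s}} a \,\|\Delta^{s/2}w\|_{L^{n/s}}^{\frac{n}{n-s}} \le a_{s,n},
\end{equation*}
we get
\begin{equation*}
e^{a|u|^{\frac{n}{n-s}}} \le e^{2^{\frac{s}{n-s}} a(\|v\|_{L^\infty}+|\overline{w}|)^{\frac{n}{n-s}}} \cdot \exp\!\left(a_{s,n}\,\frac{|w-\overline{w}|^{\frac{n}{n-s}}}{\|\Delta^{s/2}w\|_{L^{n/s}}^{\frac{n}{n-s}}}\right).
\end{equation*}
Integration, using (\ref{eq4.6}) applied to $w-\overline{w}$, then yields
\begin{equation*}
\int_M e^{a|u|^{\frac{n}{n-s}}}\,d\mu \le c(M,g)\,e^{2^{\frac{s}{n-s}} a(\|v\|_{L^\infty}+|\overline{w}|)^{\frac{n}{n-s}}} < \infty,
\end{equation*}
finishing the proof.

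I do not anticipate a serious obstacle. The only points to verify carefully are the density of $C^\infty(M)$ in $W^{s,n/s}(M)$ (standard for smooth compact $M$), the compatibility of the elliptic estimate (\ref{eq4.2}) with the Poincar\'e inequality (\ref{eq4.3}) so that smallness in the full norm translates into smallness of $\|\Delta^{s/2}w\|_{L^{n/s}}$, and the right constant $2^{s/(n-s)}$ in the convexity inequality. Everything else is a direct transcription of the first-order proof.
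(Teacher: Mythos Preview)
Your proposal is correct and is exactly the approach the paper intends: the paper's own proof of Lemma~\ref{lem4.1} simply says to repeat the proof of Lemma~\ref{lem2.1} verbatim with the obvious substitutions, which is precisely what you have done. One small quibble: the bound $\|\Delta^{s/2}w\|_{L^{n/s}}\le C\|w\|_{W^{s,n/s}}$ does not come from~(\ref{eq4.2}) (which is the reverse elliptic estimate) but is immediate from the fact that $\Delta^{s/2}$ is an order-$s$ differential operator with smooth coefficients; this does not affect the argument.
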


This follows from (\ref{eq4.6}) exactly in the same way as the proof of
Lemma \ref{lem2.1}. Indeed the argument there is written in a way working
for higher order Sobolev spaces as well.

Next we will derive an analogy of Proposition \ref{prop2.1} for $W^{s,\frac{n%
}{s}}\left( M\right) $.

\begin{proposition}
\label{prop4.1}Assume $s\in \mathbb{N}$ is an even number strictly less than 
$n$, $u_{i}\in W^{s,\frac{n}{s}}\left( M^{n}\right) $ such that $\overline{%
u_{i}}=0$, $u_{i}\rightharpoonup u$ weakly in $W^{s,\frac{n}{s}}\left(
M\right) $ and%
\begin{equation}
\left\vert \Delta ^{\frac{s}{2}}u_{i}\right\vert ^{\frac{n}{s}}d\mu
\rightarrow \left\vert \Delta ^{\frac{s}{2}}u\right\vert ^{\frac{n}{s}}d\mu
+\sigma  \label{eq4.10}
\end{equation}%
as measure. If $x\in M$ and $p\in \mathbb{R}$ such that $0<p<\sigma \left(
\left\{ x\right\} \right) ^{-\frac{s}{n-s}}$, then for some $r>0$,%
\begin{equation}
\sup_{i}\int_{B_{r}\left( x\right) }e^{a_{s,n}p\left\vert u_{i}\right\vert ^{%
\frac{n}{n-s}}}d\mu <\infty .  \label{eq4.11}
\end{equation}%
Here%
\begin{equation}
a_{s,n}=\frac{n}{\left\vert \mathbb{S}^{n-1}\right\vert }\left( \frac{\pi ^{%
\frac{n}{2}}2^{s}\Gamma \left( \frac{s}{2}\right) }{\Gamma \left( \frac{n-s}{%
2}\right) }\right) ^{\frac{n}{n-s}}.  \label{eq4.12}
\end{equation}
\end{proposition}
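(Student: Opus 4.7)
The plan is to adapt the proof of Proposition \ref{prop2.1} to this higher-order setting, replacing $\nabla$ by $\Delta^{s/2}$ throughout. First I would fix $p_{1}\in\left(p,\sigma(\{x\})^{-\frac{s}{n-s}}\right)$ and $\varepsilon>0$ small enough that
$$
(1+\varepsilon)\sigma(\{x\})<\frac{1}{p_{1}^{(n-s)/s}},\qquad (1+\varepsilon)p<p_{1},
$$
then set $v_{i}=u_{i}-u$, so $v_{i}\rightharpoonup 0$ in $W^{s,n/s}(M)$ and, by Rellich--Kondrachov, $v_{i}\to 0$ strongly in $W^{s-1,n/s}(M)$.

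The new technical point is analyzing $\Delta^{s/2}(\varphi v_{i})$ for a cut-off $\varphi\in C^{\infty}(M)$. Iterating the Leibniz rule, one writes
$$
\Delta^{s/2}(\varphi v_{i})=\varphi\,\Delta^{s/2}v_{i}+R_{i}(\varphi),
$$
where every term in $R_{i}(\varphi)$ contains a derivative of $v_{i}$ of order at most $s-1$. By the strong convergence $v_{i}\to 0$ in $W^{s-1,n/s}$, one has $\|R_{i}(\varphi)\|_{L^{n/s}}\to 0$. Combining this with the triangle inequality, the elementary bound $(a+b+c)^{n/s}\leq(1+\varepsilon)a^{n/s}+c(n,s,\varepsilon)(b^{n/s}+c^{n/s})$, the measure convergence \eqref{eq4.10}, and the identity $\Delta^{s/2}u_{i}=\Delta^{s/2}v_{i}+\Delta^{s/2}u$, one obtains
$$
\limsup_{i\to\infty}\|\Delta^{s/2}(\varphi v_{i})\|_{L^{n/s}}^{n/s}\leq(1+\varepsilon)\int_{M}|\varphi|^{n/s}d\sigma+c_{1}(n,s,\varepsilon)\int_{M}|\varphi|^{n/s}|\Delta^{s/2}u|^{n/s}d\mu.
$$
Choosing $\varphi$ supported in a small neighborhood of $x$ with $\varphi\equiv 1$ on some $B_{r}(x)$, the right-hand side can be made strictly less than $1/p_{1}^{(n-s)/s}$, so $\|\Delta^{s/2}(\varphi v_{i})\|_{L^{n/s}}^{n/(n-s)}<1/p_{1}$ for large $i$.

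The remaining steps mirror the proof of Proposition \ref{prop2.1}. Applying the Moser--Trudinger inequality \eqref{eq4.6} to $\varphi v_{i}-\overline{\varphi v_{i}}$ (noting $\Delta^{s/2}$ annihilates constants) gives a uniform bound
$$
\int_{B_{r}(x)}e^{a_{s,n}p_{1}|v_{i}-\overline{\varphi v_{i}}|^{n/(n-s)}}d\mu\leq c(M,g).
$$
Writing $u_{i}=(v_{i}-\overline{\varphi v_{i}})+u+\overline{\varphi v_{i}}$ and using $|a+b+c|^{n/(n-s)}\leq(1+\varepsilon)|a|^{n/(n-s)}+c(n,s,\varepsilon)(|b|^{n/(n-s)}+|c|^{n/(n-s)})$, one factors
$$
e^{a_{s,n}|u_{i}|^{n/(n-s)}}\leq e^{(1+\varepsilon)a_{s,n}|v_{i}-\overline{\varphi v_{i}}|^{n/(n-s)}}\cdot e^{c(n,s,\varepsilon)|u|^{n/(n-s)}}\cdot e^{c(n,s,\varepsilon)|\overline{\varphi v_{i}}|^{n/(n-s)}}.
$$
The first factor is bounded in $L^{p_{1}/(1+\varepsilon)}(B_{r}(x))$, the second lies in $L^{q}$ for every $q<\infty$ by Lemma \ref{lem4.1}, and the third tends to $1$ since $\overline{\varphi v_{i}}\to 0$. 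Because $p_{1}/(1+\varepsilon)>p$, H\"older's inequality delivers the desired uniform $L^{p}$ bound of $e^{a_{s,n}|u_{i}|^{n/(n-s)}}$ on $B_{r}(x)$.

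The main obstacle I foresee is the commutator analysis in the first step: one must verify that the iterated Leibniz expansion of $\Delta^{s/2}(\varphi v_{i})$ really produces only terms with at most $s-1$ derivatives on $v_{i}$, and that no honest $s$-th derivative of $v_{i}$ survives outside the single clean term $\varphi\,\Delta^{s/2}v_{i}$. Once this is done, the rest of the argument is a near-verbatim repetition of the proof of Proposition \ref{prop2.1} with the exponents adjusted from $n/(n-1)$ to $n/(n-s)$ and from $n-1$ to $(n-s)/s$.
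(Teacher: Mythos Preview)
Your proposal is correct and follows essentially the same approach as the paper's proof: the paper also fixes $p_{1}$ and $\varepsilon$, sets $v_{i}=u_{i}-u$, expands $\Delta^{s/2}(\varphi v_{i})$ via the Leibniz rule as $\varphi\Delta^{s/2}v_{i}$ plus terms involving $|D^{s-k}\varphi||D^{k}v_{i}|$ for $k\le s-1$, and then proceeds exactly as you outline. Your ``main obstacle'' is handled in the paper by the pointwise bound $|\Delta^{s/2}(\varphi v_{i})|\le|\varphi\Delta^{s/2}v_{i}|+c\sum_{k=0}^{s-1}|D^{s-k}\varphi||D^{k}v_{i}|$, which is immediate from iterating the product rule for $\Delta$.
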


\begin{proof}
Fix $p_{1}\in \left( p,\sigma \left( \left\{ x\right\} \right) ^{-\frac{s}{%
n-s}}\right) $, then%
\begin{equation}
\sigma \left( \left\{ x\right\} \right) <\frac{1}{p_{1}^{\frac{n-s}{s}}}.
\label{eq4.13}
\end{equation}%
We can find a $\varepsilon >0$ such that%
\begin{equation}
\left( 1+\varepsilon \right) \sigma \left( \left\{ x\right\} \right) <\frac{1%
}{p_{1}^{\frac{n-s}{s}}}  \label{eq4.14}
\end{equation}%
and%
\begin{equation}
\left( 1+\varepsilon \right) p<p_{1}.  \label{eq4.15}
\end{equation}%
Let $v_{i}=u_{i}-u$, then $v_{i}\rightharpoonup 0$ weakly in $W^{s,\frac{n}{s%
}}\left( M\right) $, $v_{i}\rightarrow 0$ in $W^{s-1,\frac{n}{s}}\left(
M\right) $. For any $\varphi \in C^{\infty }\left( M\right) $, we have%
\begin{eqnarray*}
\left\vert \Delta ^{\frac{s}{2}}\left( \varphi v_{i}\right) \right\vert
&\leq &\left\vert \varphi \Delta ^{\frac{s}{2}}v_{i}\right\vert
+c\sum_{k=0}^{s-1}\left\vert D^{s-k}\varphi \right\vert \left\vert
D^{k}v_{i}\right\vert \\
&\leq &\left\vert \varphi \Delta ^{\frac{s}{2}}u_{i}\right\vert +\left\vert
\varphi \Delta ^{\frac{s}{2}}u\right\vert +c\sum_{k=0}^{s-1}\left\vert
D^{s-k}\varphi \right\vert \left\vert D^{k}v_{i}\right\vert .
\end{eqnarray*}%
Hence%
\begin{eqnarray*}
&&\left\Vert \Delta ^{\frac{s}{2}}\left( \varphi v_{i}\right) \right\Vert
_{L^{\frac{n}{s}}}^{\frac{n}{s}} \\
&\leq &\left( \left\Vert \varphi \Delta ^{\frac{s}{2}}u_{i}\right\Vert _{L^{%
\frac{n}{s}}}+\left\Vert \varphi \Delta ^{\frac{s}{2}}u\right\Vert _{L^{%
\frac{n}{s}}}+c\sum_{k=0}^{s-1}\left\Vert \left\vert D^{s-k}\varphi
\right\vert \left\vert D^{k}v_{i}\right\vert \right\Vert _{L^{\frac{n}{s}%
}}\right) ^{\frac{n}{s}} \\
&\leq &\left( 1+\varepsilon \right) \left\Vert \varphi \Delta ^{\frac{s}{2}%
}u_{i}\right\Vert _{L^{\frac{n}{s}}}^{\frac{n}{s}}+c\left( \varepsilon
\right) \left\Vert \varphi \Delta ^{\frac{s}{2}}u\right\Vert _{L^{\frac{n}{s}%
}}^{\frac{n}{s}}+c\left( \varepsilon \right) \sum_{k=0}^{s-1}\left\Vert
\left\vert D^{s-k}\varphi \right\vert \left\vert D^{k}v_{i}\right\vert
\right\Vert _{L^{\frac{n}{s}}}^{\frac{n}{s}}.
\end{eqnarray*}%
Letting $i\rightarrow \infty $,%
\begin{eqnarray*}
&&\lim \sup_{i\rightarrow \infty }\left\Vert \Delta ^{\frac{s}{2}}\left(
\varphi v_{i}\right) \right\Vert _{L^{\frac{n}{s}}}^{\frac{n}{s}} \\
&\leq &\left( 1+\varepsilon \right) \left( \int_{M}\left\vert \varphi
\right\vert ^{\frac{n}{s}}d\sigma +\int_{M}\left\vert \varphi \right\vert ^{%
\frac{n}{s}}\left\vert \Delta ^{\frac{s}{2}}u\right\vert ^{\frac{n}{s}}d\mu
\right) +c\left( \varepsilon \right) \left\Vert \varphi \Delta ^{\frac{s}{2}%
}u\right\Vert _{L^{\frac{n}{s}}}^{\frac{n}{s}} \\
&=&\left( 1+\varepsilon \right) \int_{M}\left\vert \varphi \right\vert ^{%
\frac{n}{s}}d\sigma +c_{1}\left( \varepsilon \right) \int_{M}\left\vert
\varphi \right\vert ^{\frac{n}{s}}\left\vert \Delta ^{\frac{s}{2}%
}u\right\vert ^{\frac{n}{s}}d\mu .
\end{eqnarray*}%
We can find a $\varphi \in C^{\infty }\left( M\right) $ such that $\left.
\varphi \right\vert _{B_{r}\left( x\right) }=1$ for some $r>0$ and%
\begin{equation*}
\left( 1+\varepsilon \right) \int_{M}\left\vert \varphi \right\vert ^{\frac{n%
}{s}}d\sigma +c_{1}\left( \varepsilon \right) \int_{M}\left\vert \varphi
\right\vert ^{\frac{n}{s}}\left\vert \Delta ^{\frac{s}{2}}u\right\vert ^{%
\frac{n}{s}}d\mu <\frac{1}{p_{1}^{\frac{n-s}{s}}}.
\end{equation*}%
Hence for $i$ large enough, we have%
\begin{equation*}
\left\Vert \Delta ^{\frac{s}{2}}\left( \varphi v_{i}\right) \right\Vert _{L^{%
\frac{n}{s}}}^{\frac{n}{s}}<\frac{1}{p_{1}^{\frac{n-s}{s}}}.
\end{equation*}%
In other words,%
\begin{equation*}
\left\Vert \Delta ^{\frac{s}{2}}\left( \varphi v_{i}\right) \right\Vert _{L^{%
\frac{n}{s}}}^{\frac{n}{n-s}}<\frac{1}{p_{1}}.
\end{equation*}%
We have%
\begin{eqnarray*}
\int_{B_{r}\left( x\right) }e^{a_{s,n}p_{1}\left\vert v_{i}-\overline{%
\varphi v_{i}}\right\vert ^{\frac{n}{n-s}}}d\mu &\leq
&\int_{M}e^{a_{s,n}p_{1}\left\vert \varphi v_{i}-\overline{\varphi v_{i}}%
\right\vert ^{\frac{n}{n-s}}}d\mu \\
&\leq &\int_{M}e^{a_{s,n}\frac{\left\vert \varphi v_{i}-\overline{\varphi
v_{i}}\right\vert ^{\frac{n}{n-s}}}{\left\Vert \Delta ^{\frac{s}{2}}\left(
\varphi v_{i}\right) \right\Vert _{L^{\frac{n}{s}}}^{\frac{n}{n-s}}}}d\mu \\
&\leq &c\left( M,g\right) .
\end{eqnarray*}%
Next we observe that%
\begin{eqnarray*}
&&\left\vert u_{i}\right\vert ^{\frac{n}{n-s}} \\
&=&\left\vert \left( v_{i}-\overline{\varphi v_{i}}\right) +u+\overline{%
\varphi v_{i}}\right\vert ^{\frac{n}{n-s}} \\
&\leq &\left( 1+\varepsilon \right) \left\vert v_{i}-\overline{\varphi v_{i}}%
\right\vert ^{\frac{n}{n-s}}+c\left( \varepsilon \right) \left\vert
u\right\vert ^{\frac{n}{n-s}}+c\left( \varepsilon \right) \left\vert 
\overline{\varphi v_{i}}\right\vert ^{\frac{n}{n-s}},
\end{eqnarray*}%
hence%
\begin{equation*}
e^{a_{s,n}\left\vert u_{i}\right\vert ^{\frac{n}{n-s}}}\leq e^{\left(
1+\varepsilon \right) a_{s,n}\left\vert v_{i}-\overline{\varphi v_{i}}%
\right\vert ^{\frac{n}{n-s}}}e^{c\left( \varepsilon \right) \left\vert
u\right\vert ^{\frac{n}{n-s}}}e^{c\left( \varepsilon \right) \left\vert 
\overline{\varphi v_{i}}\right\vert ^{\frac{n}{n-s}}}.
\end{equation*}%
Since $e^{\left( 1+\varepsilon \right) a_{s,n}\left\vert v_{i}-\overline{%
\varphi v_{i}}\right\vert ^{\frac{n}{n-s}}}$ is bounded in $L^{\frac{p_{1}}{%
1+\varepsilon }}\left( B_{r}\left( x\right) \right) $, $e^{c\left(
\varepsilon \right) \left\vert u\right\vert ^{\frac{n}{n-s}}}\in L^{q}\left(
B_{r}\left( x\right) \right) $ for any $0<q<\infty $ (by Lemma \ref{lem4.1}%
), $e^{c\left( \varepsilon \right) \left\vert \overline{\varphi v_{i}}%
\right\vert ^{\frac{n}{n-s}}}\rightarrow 1$ as $i\rightarrow \infty $ and $%
\frac{p_{1}}{1+\varepsilon }>p$, it follows from Holder inequality that $%
e^{a_{s,n}\left\vert u_{i}\right\vert ^{\frac{n}{n-s}}}$ is bounded in $%
L^{p}\left( B_{r}\left( x\right) \right) $.
\end{proof}

Proposition \ref{prop4.1} together with a covering argument implies the
following corollary, just like the proof of Corollary \ref{cor2.1}.

\begin{corollary}
\label{cor4.1}Assume $u_{i}\in W^{s,\frac{n}{s}}\left( M\right) $ such that $%
\overline{u_{i}}=0$ and $\left\Vert \Delta ^{\frac{s}{2}}u_{i}\right\Vert
_{L^{\frac{n}{s}}}\leq 1$. We also assume $u_{i}\rightharpoonup u$ weakly in 
$W^{s,\frac{n}{s}}\left( M\right) $ and%
\begin{equation}
\left\vert \Delta ^{\frac{s}{2}}u_{i}\right\vert ^{\frac{n}{s}}d\mu
\rightarrow \left\vert \Delta ^{\frac{s}{2}}u\right\vert ^{\frac{n}{s}}d\mu
+\sigma  \label{eq4.16}
\end{equation}%
as measure. Let $K$ be a compact subset of $M$ and%
\begin{equation}
\kappa =\max_{x\in K}\sigma \left( \left\{ x\right\} \right) .
\label{eq4.17}
\end{equation}

\begin{enumerate}
\item If $\kappa <1$, then for any $1\leq p<\kappa ^{-\frac{s}{n-s}}$, 
\begin{equation}
\sup_{i}\int_{K}e^{a_{s,n}p\left\vert u_{i}\right\vert ^{\frac{n}{n-s}}}d\mu
<\infty .  \label{eq4.18}
\end{equation}

\item If $\kappa =1$, then $\sigma =\delta _{x_{0}}$ for some $x_{0}\in K$, $%
u=0$ and after passing to a subsequence,%
\begin{equation}
e^{a_{s,n}\left\vert u_{i}\right\vert ^{\frac{n}{n-s}}}\rightarrow
1+c_{0}\delta _{x_{0}}  \label{eq4.19}
\end{equation}%
as measure for some $c_{0}\geq 0$.
\end{enumerate}
\end{corollary}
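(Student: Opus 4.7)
The plan is to mirror the proof of Corollary \ref{cor2.1}, replacing the role of Proposition \ref{prop2.1} with Proposition \ref{prop4.1} and the Moser-Trudinger inequality (\ref{eq1.4}) with its higher-order analogue (\ref{eq4.6}). The combinatorial skeleton of the argument is identical; only the exponents $\frac{n}{n-1}$ and the Poincaré step need to be updated.

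For part (1), assume $\kappa < 1$ and fix $p$ with $1 \leq p < \kappa^{-\frac{s}{n-s}}$. For every $x \in K$ we have $\sigma(\{x\}) \leq \kappa$, so $p < \sigma(\{x\})^{-\frac{s}{n-s}}$, and Proposition \ref{prop4.1} furnishes a radius $r_x > 0$ with $\sup_i \int_{B_{r_x}(x)} e^{a_{s,n} p |u_i|^{\frac{n}{n-s}}} d\mu < \infty$. Since $K$ is compact, extract a finite subcover $\{B_{r_{x_k}}(x_k)\}_{k=1}^{N}$ and bound $\int_K e^{a_{s,n} p |u_i|^{\frac{n}{n-s}}} d\mu$ by the sum of the local suprema. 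This is purely a covering argument and should present no obstacle.

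For part (2), suppose $\kappa = 1$, realized at some $x_0 \in K$. The weak lower semicontinuity (or, more directly, taking $\varphi \equiv 1$ in (\ref{eq4.10})) gives $\int_M |\Delta^{s/2} u|^{n/s} d\mu + \sigma(M) \leq \liminf_i \|\Delta^{s/2} u_i\|_{L^{n/s}}^{n/s} \leq 1$. Since $\sigma(\{x_0\}) = 1$ already saturates this, we must have $\Delta^{s/2} u \equiv 0$ and $\sigma = \delta_{x_0}$. Combined with $\overline{u} = 0$ and the Poincaré inequality (\ref{eq4.3}), this forces $u = 0$. Pass to a subsequence so $u_i \to 0$ a.e. For any $r > 0$, the compact set $K' = M \setminus B_r(x_0)$ satisfies $\max_{x \in K'} \sigma(\{x\}) = 0$, so part (1) gives, for every $q < \infty$, a uniform bound on $\|e^{a_{s,n} |u_i|^{\frac{n}{n-s}}}\|_{L^q(K')}$; together with a.e. convergence this yields $e^{a_{s,n} |u_i|^{\frac{n}{n-s}}} \to 1$ in $L^1(K')$. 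Meanwhile (\ref{eq4.6}) applied to $u_i$ (since $\overline{u_i} = 0$ and $\|\Delta^{s/2} u_i\|_{L^{n/s}} \leq 1$) gives the global bound $\int_M e^{a_{s,n} |u_i|^{\frac{n}{n-s}}} d\mu \leq c(M,g)$. These two facts together imply, after extracting a further subsequence and passing to a weak-$\ast$ limit, that $e^{a_{s,n} |u_i|^{\frac{n}{n-s}}} d\mu \rightharpoonup d\mu + c_0 \delta_{x_0}$ for some $c_0 \geq 0$.

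The only potentially delicate point is the deduction $u = 0$ from $\Delta^{s/2} u = 0$ and $\overline{u} = 0$: one uses standard elliptic regularity together with (\ref{eq4.3}), which is already cited in this section. Apart from this, every step is either a direct transcription of the dimension-one-derivative case or a routine consequence of a finite covering of the compact set $K$, so I do not expect serious obstacles.
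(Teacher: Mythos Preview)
Your proposal is correct and follows exactly the approach the paper intends: the paper's own proof of Corollary~\ref{cor4.1} is the single sentence ``Proposition~\ref{prop4.1} together with a covering argument implies the following corollary, just like the proof of Corollary~\ref{cor2.1},'' and you have faithfully transcribed that argument with the appropriate exponent changes and the use of the Poincar\'e inequality~(\ref{eq4.3}) in place of the elementary fact that $\nabla u=0$ forces $u$ constant.
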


We are ready to derive an analogy of Theorem \ref{thm2.1} for $W^{s,\frac{n}{%
s}}\left( M\right) $.

\begin{theorem}
\label{thm4.1}Let $s\in \mathbb{N}$ be an even number strictly less than $n$%
. Assume $\alpha >0$, $m_{i}>0$, $m_{i}\rightarrow \infty $, $u_{i}\in W^{s,%
\frac{n}{s}}\left( M^{n}\right) $ such that $\overline{u_{i}}=0$ and%
\begin{equation}
\log \int_{M}e^{nm_{i}u_{i}}d\mu \geq \alpha m_{i}^{\frac{n}{s}}.
\label{eq4.20}
\end{equation}%
We also assume $u_{i}\rightharpoonup u$ weakly in $W^{s,\frac{n}{s}}\left(
M\right) $, $\left\vert \Delta ^{\frac{s}{2}}u_{i}\right\vert ^{\frac{n}{s}%
}d\mu \rightarrow \left\vert \Delta ^{\frac{s}{2}}u\right\vert ^{\frac{n}{s}%
}d\mu +\sigma $ as measure and%
\begin{equation}
\frac{e^{nm_{i}u_{i}}}{\int_{M}e^{nm_{i}u_{i}}d\mu }\rightarrow \nu
\label{eq4.21}
\end{equation}%
as measure. Let%
\begin{equation}
\left\{ x\in M:\sigma \left( \left\{ x\right\} \right) \geq \alpha
_{s,n}^{-1}\alpha \right\} =\left\{ x_{1},\cdots ,x_{N}\right\} ,
\label{eq4.22}
\end{equation}%
here%
\begin{equation}
\alpha _{s,n}=s\left( \frac{n-s}{n}\left\vert \mathbb{S}^{n-1}\right\vert
\right) ^{\frac{n-s}{s}}\left( \frac{\Gamma \left( \frac{n-s}{2}\right) }{%
\pi ^{\frac{n}{2}}2^{s}\Gamma \left( \frac{s}{2}\right) }\right) ^{\frac{n}{s%
}},  \label{eq4.23}
\end{equation}%
then%
\begin{equation}
\nu =\sum_{i=1}^{N}\nu _{i}\delta _{x_{i}},  \label{eq4.24}
\end{equation}%
here $\nu _{i}\geq 0$ and $\sum_{i=1}^{N}\nu _{i}=1$.
\end{theorem}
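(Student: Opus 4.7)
The plan is to mimic the proof of Theorem 2.1 verbatim, with the following systematic replacements dictated by the scaling for $W^{s,n/s}$: the exponent $n/(n-1)$ becomes $n/(n-s)$, the exponent $n$ becomes $n/s$, the pair $(a_n,\alpha_n)$ becomes $(a_{s,n},\alpha_{s,n})$, and Proposition \ref{prop2.1} is replaced by Proposition \ref{prop4.1}. The whole argument reduces to one claim: if $x\in M$ satisfies $\sigma(\{x\})<\alpha_{s,n}^{-1}\alpha$, then $\nu(B_r(x))=0$ for some $r>0$. Granting this claim, the complement of $\{x_1,\dots,x_N\}$ is covered by countably many such balls, so $\nu(M\setminus\{x_1,\dots,x_N\})=0$ and, since $\nu(M)=1$, one concludes $\nu=\sum \nu_i\delta_{x_i}$ with $\nu_i\geq 0$ summing to $1$.

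To establish the claim I would fix
\[
p\in\Bigl(\alpha_{s,n}^{\frac{s}{n-s}}\alpha^{-\frac{s}{n-s}},\;\sigma(\{x\})^{-\frac{s}{n-s}}\Bigr),
\]
which is a nonempty interval precisely because $\sigma(\{x\})<\alpha_{s,n}^{-1}\alpha$. Proposition \ref{prop4.1} then delivers an $r>0$ with
\[
\sup_i\int_{B_r(x)} e^{a_{s,n}p|u_i|^{n/(n-s)}}\,d\mu<\infty.
\]
The next step is the higher order analog of (\ref{eq2.21}), namely the pointwise Young inequality
\[
nm_iu_i\leq a_{s,n}p|u_i|^{\frac{n}{n-s}}+\frac{\alpha_{s,n}}{p^{(n-s)/s}}m_i^{n/s},
\]
obtained by applying the scaled Young inequality with conjugate exponents $n/(n-s)$ and $n/s$. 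Verifying that the dual constant is exactly $\alpha_{s,n}/p^{(n-s)/s}$ reduces to the algebraic identity $\alpha_{s,n}\cdot a_{s,n}^{(n-s)/s}=s(n-s)^{(n-s)/s}$, which is immediate from the explicit formulas (\ref{eq4.4}) and (\ref{eq4.8}).

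Combining the two displays yields
\[
\int_{B_r(x)}e^{nm_iu_i}\,d\mu\leq c\,\exp\!\Bigl(\tfrac{\alpha_{s,n}}{p^{(n-s)/s}}m_i^{n/s}\Bigr),
\]
and dividing by $\int_M e^{nm_iu_i}\,d\mu\geq e^{\alpha m_i^{n/s}}$ (from (\ref{eq4.20})) gives
\[
\frac{\int_{B_r(x)}e^{nm_iu_i}\,d\mu}{\int_M e^{nm_iu_i}\,d\mu}\leq c\,\exp\!\Bigl(\bigl(\tfrac{\alpha_{s,n}}{p^{(n-s)/s}}-\alpha\bigr)m_i^{n/s}\Bigr)\longrightarrow 0,
\]
since $\alpha_{s,n}/p^{(n-s)/s}<\alpha$ by our choice of $p$ and $m_i\to\infty$. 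Passing to the limit via weak-* convergence (\ref{eq4.21}) yields $\nu(B_r(x))=0$, as desired.

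The only genuinely new ingredient compared with Theorem \ref{thm2.1} is the Young inequality with the sharp constants $a_{s,n}$, $\alpha_{s,n}$; once this is bookkept correctly, the rest is a direct transcription. I expect the main obstacle to be psychological rather than technical: making sure that every occurrence of the exponent $n-1$ in the proof of Theorem \ref{thm2.1} is replaced by the correct $n-s$ (or $(n-s)/s$) and that the definitions of $a_{s,n}$, $\alpha_{s,n}$ are indeed conjugate, so that the threshold $\alpha_{s,n}^{-1}\alpha$ in (\ref{eq4.22}) matches what Young's inequality naturally produces.
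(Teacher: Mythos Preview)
Your proposal is correct and follows essentially the same approach as the paper: fix $p$ in the interval $(\alpha_{s,n}^{s/(n-s)}\alpha^{-s/(n-s)},\sigma(\{x\})^{-s/(n-s)})$, invoke Proposition~\ref{prop4.1}, apply Young's inequality to get $nm_iu_i\le a_{s,n}p|u_i|^{n/(n-s)}+\alpha_{s,n}p^{-(n-s)/s}m_i^{n/s}$, and compare with the lower bound (\ref{eq4.20}) to force $\nu(B_r(x))=0$. The only cosmetic difference is that you spell out the verification of the Young constant and the covering step, which the paper leaves implicit.
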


\begin{proof}
Assume $x\in M$ such that $\sigma \left( \left\{ x\right\} \right) <\alpha
_{s,n}^{-1}\alpha $, then we claim for some $r>0$, $\nu \left( B_{r}\left(
x\right) \right) =0$. Indeed we fix $p$ such that%
\begin{equation}
\alpha _{s,n}^{\frac{s}{n-s}}\alpha ^{-\frac{s}{n-s}}<p<\sigma \left(
\left\{ x\right\} \right) ^{-\frac{s}{n-s}},  \label{eq4.25}
\end{equation}%
it follows from Proposition \ref{prop4.1} that for some $r>0$,%
\begin{equation}
\int_{B_{r}\left( x\right) }e^{a_{s,n}p\left\vert u_{i}\right\vert ^{\frac{n%
}{n-s}}}d\mu \leq c,  \label{eq4.26}
\end{equation}%
here $c$ is a positive constant independent of $i$. By Young's inequality we
have%
\begin{equation}
nm_{i}u_{i}\leq a_{s,n}p\left\vert u_{i}\right\vert ^{\frac{n}{n-s}}+\frac{%
\alpha _{s,n}m_{i}^{\frac{n}{s}}}{p^{\frac{n-s}{s}}},  \label{eq4.27}
\end{equation}%
hence%
\begin{equation*}
\int_{B_{r}\left( x\right) }e^{nm_{i}u_{i}}d\mu \leq ce^{\frac{\alpha
_{s,n}m_{i}^{\frac{n}{s}}}{p^{\frac{n-s}{s}}}}.
\end{equation*}%
It follows that%
\begin{equation*}
\frac{\int_{B_{r}\left( x\right) }e^{nm_{i}u_{i}}d\mu }{%
\int_{M}e^{nm_{i}u_{i}}d\mu }\leq ce^{\left( \frac{\alpha _{s,n}}{p^{\frac{%
n-s}{s}}}-\alpha \right) m_{i}^{\frac{n}{s}}}.
\end{equation*}%
In particular,%
\begin{equation*}
\nu \left( B_{r}\left( x\right) \right) \leq \lim \inf_{i\rightarrow \infty }%
\frac{\int_{B_{r}\left( x\right) }e^{nm_{i}u_{i}}d\mu }{%
\int_{M}e^{nm_{i}u_{i}}d\mu }=0.
\end{equation*}%
We get $\nu \left( B_{r}\left( x\right) \right) =0$. The claim is proved.

Clearly the claim implies%
\begin{equation}
\nu \left( M\backslash \left\{ x_{1},\cdots ,x_{N}\right\} \right) =0.
\label{eq4.28}
\end{equation}%
Hence $\nu =\sum_{i=1}^{N}\nu _{i}\delta _{x_{i}},$ with $\nu _{i}\geq 0$
and $\sum_{i=1}^{N}\nu _{i}=1$.
\end{proof}

The analogy of Theorem \ref{thm3.2} is the following

\begin{theorem}
\label{thm4.2}Let $s\in \mathbb{N}$ be an even number strictly less than $n$%
. If $u\in W^{s,\frac{n}{s}}\left( \mathbb{S}^{n}\right) $ such that $\int_{%
\mathbb{S}^{n}}ud\mu =0$ (here $\mu $ is the standard measure on $\mathbb{S}%
^{n}$) and for every $p\in \overset{\circ }{\mathcal{P}}_{m}$,%
\begin{equation}
\left\vert \int_{\mathbb{S}^{n}}pe^{nu}d\mu \right\vert \leq b\left(
p\right) ,  \label{eq4.29}
\end{equation}%
here $b\left( p\right) $ is a nonnegative number depending only on $p$, then
for any $\varepsilon >0$,%
\begin{equation}
\log \int_{\mathbb{S}^{n}}e^{nu}d\mu \leq \left( \frac{\alpha _{s,n}}{%
N_{m}\left( \mathbb{S}^{n}\right) }+\varepsilon \right) \left\Vert \Delta ^{%
\frac{s}{2}}u\right\Vert _{L^{\frac{n}{s}}}^{\frac{n}{s}}+c\left(
m,n,b,\varepsilon \right) .  \label{eq4.30}
\end{equation}%
Here $N_{m}\left( \mathbb{S}^{n}\right) $ is defined in Definition \ref%
{def3.1} and%
\begin{equation}
\alpha _{s,n}=s\left( \frac{n-s}{n}\left\vert \mathbb{S}^{n-1}\right\vert
\right) ^{\frac{n-s}{s}}\left( \frac{\Gamma \left( \frac{n-s}{2}\right) }{%
\pi ^{\frac{n}{2}}2^{s}\Gamma \left( \frac{s}{2}\right) }\right) ^{\frac{n}{s%
}}.  \label{eq4.31}
\end{equation}
\end{theorem}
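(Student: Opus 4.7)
The plan is to follow the contradiction argument used for Theorem \ref{thm3.2} (which is in turn a direct transplant of the proof of Theorem \ref{thm3.1}), substituting Theorem \ref{thm4.1} for Theorem \ref{thm2.1}, and the higher-order Moser-Trudinger-Onofri inequality (\ref{eq4.7}) for (\ref{eq1.5}). Set $a=\frac{\alpha_{s,n}}{N_m(\mathbb{S}^n)}+\varepsilon$ and suppose the inequality fails for every constant $c$. Then there exists $v_i\in W^{s,n/s}(\mathbb{S}^n)$ with $\overline{v_i}=0$ and $|\int_{\mathbb{S}^n}pe^{nv_i}d\mu|\leq b(p)$ for every $p\in\overset{\circ}{\mathcal{P}}_m$, such that $\log\int_{\mathbb{S}^n}e^{nv_i}d\mu-a\|\Delta^{s/2}v_i\|_{L^{n/s}}^{n/s}\to\infty$. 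Comparing with (\ref{eq4.7}) forces $\|\Delta^{s/2}v_i\|_{L^{n/s}}\to\infty$; in particular $\int_{\mathbb{S}^n}e^{nv_i}d\mu\to\infty$.

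Next I would set $m_i=\|\Delta^{s/2}v_i\|_{L^{n/s}}$ and $u_i=v_i/m_i$, so that $\|\Delta^{s/2}u_i\|_{L^{n/s}}=1$, $\overline{u_i}=0$, and by the Poincaré inequality (\ref{eq4.3}) combined with (\ref{eq4.2}) the sequence $u_i$ is bounded in $W^{s,n/s}(\mathbb{S}^n)$. After passing to a subsequence, $u_i\rightharpoonup u$ weakly in $W^{s,n/s}(\mathbb{S}^n)$, $|\Delta^{s/2}u_i|^{n/s}d\mu\to|\Delta^{s/2}u|^{n/s}d\mu+\sigma$ as measures, and $e^{nm_iu_i}/\int_{\mathbb{S}^n}e^{nm_iu_i}d\mu\to\nu$ as measures. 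Since $\log\int_{\mathbb{S}^n}e^{nm_iu_i}d\mu\geq am_i^{n/s}$ for large $i$, Theorem \ref{thm4.1} applies with $\alpha=a$, yielding $\{x\in\mathbb{S}^n:\sigma(\{x\})\geq\alpha_{s,n}^{-1}a\}=\{x_1,\ldots,x_N\}$ and $\nu=\sum_{j=1}^N\nu_j\delta_{x_j}$ with $\nu_j\geq 0$ and $\sum_j\nu_j=1$.

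Now I verify that $(\nu_j,x_j)_{j=1}^N$ is a feasible configuration in Definition \ref{def3.1}. For every $p\in\overset{\circ}{\mathcal{P}}_m$, the hypothesis gives $|\int_{\mathbb{S}^n}pe^{nv_i}d\mu|\leq b(p)$, and dividing by $\int_{\mathbb{S}^n}e^{nv_i}d\mu\to\infty$ and passing to the limit yields $\int_{\mathbb{S}^n}p\,d\nu=\sum_j\nu_jp(x_j)=0$. Hence $N\in\mathcal{N}_m(\mathbb{S}^n)$ and so $N\geq N_m(\mathbb{S}^n)$. On the other hand, the weak decomposition of $|\Delta^{s/2}u_i|^{n/s}d\mu$ gives $\sigma(\mathbb{S}^n)\leq\lim_i\|\Delta^{s/2}u_i\|_{L^{n/s}}^{n/s}=1$, so $N\alpha_{s,n}^{-1}a\leq\sum_j\sigma(\{x_j\})\leq 1$, and therefore $a\leq\alpha_{s,n}/N\leq\alpha_{s,n}/N_m(\mathbb{S}^n)$, contradicting the choice of $a$.

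The conceptually hard part — the higher-order concentration compactness principle underlying the bubble decomposition of $\nu$ — has already been taken care of by Theorem \ref{thm4.1}, so the remaining work is essentially bookkeeping. The only mild subtlety is transferring the moment bound $|\int pe^{nv_i}d\mu|\leq b(p)$ to the exact identity $\int p\,d\nu=0$, and this hinges on the divergence $\int e^{nv_i}d\mu\to\infty$, precisely the same mechanism that was used to upgrade (\ref{eq3.7}) to the moment condition on the limit measure in the proof of Theorem \ref{thm3.2}.
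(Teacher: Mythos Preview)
Your proposal is correct and follows essentially the same contradiction argument as the paper: assume failure, normalize by $m_i=\|\Delta^{s/2}v_i\|_{L^{n/s}}$, extract weak and measure limits, invoke Theorem \ref{thm4.1}, and derive $a\leq\alpha_{s,n}/N_m(\mathbb{S}^n)$ from $\sigma(\mathbb{S}^n)\leq 1$ together with $N\in\mathcal{N}_m(\mathbb{S}^n)$. If anything, you are slightly more explicit than the paper in two places --- citing (\ref{eq4.2}) and (\ref{eq4.3}) for the $W^{s,n/s}$-boundedness of $u_i$, and spelling out the division by $\int_{\mathbb{S}^n}e^{nv_i}d\mu\to\infty$ to pass from the bounded moments (\ref{eq4.29}) to $\int p\,d\nu=0$ --- but the structure is identical.
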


\begin{proof}
Let $\alpha =\frac{\alpha _{s,n}}{N_{m}\left( \mathbb{S}^{n}\right) }%
+\varepsilon $. If the inequality (\ref{eq4.30}) is not true, then there
exists $v_{i}\in W^{s,\frac{n}{s}}\left( \mathbb{S}^{n}\right) $ such that $%
\overline{v_{i}}=0$, 
\begin{equation*}
\left\vert \int_{\mathbb{S}^{n}}pe^{nv_{i}}d\mu \right\vert \leq b\left(
p\right) ,
\end{equation*}%
for all $p\in \overset{\circ }{\mathcal{P}}_{m}$ and%
\begin{equation*}
\log \int_{\mathbb{S}^{n}}e^{nv_{i}}d\mu -\alpha \left\Vert \Delta ^{\frac{s%
}{2}}v_{i}\right\Vert _{L^{\frac{n}{s}}}^{\frac{n}{s}}\rightarrow \infty
\end{equation*}%
as $i\rightarrow \infty $. In particular $\int_{\mathbb{S}%
^{n}}e^{nv_{i}}d\mu \rightarrow \infty $. By (\ref{eq4.7}),%
\begin{equation*}
\log \int_{\mathbb{S}^{n}}e^{nv_{i}}d\mu \leq \alpha _{s,n}\left\Vert \Delta
^{\frac{s}{2}}v_{i}\right\Vert _{L^{\frac{n}{s}}}^{\frac{n}{s}}+c\left(
n\right) ,
\end{equation*}%
hence $\left\Vert \Delta ^{\frac{s}{2}}v_{i}\right\Vert _{L^{\frac{n}{s}%
}}\rightarrow \infty $. Let $m_{i}=\left\Vert \Delta ^{\frac{s}{2}%
}v_{i}\right\Vert _{L^{\frac{n}{s}}}$ and $u_{i}=\frac{v_{i}}{m_{i}}$, then $%
m_{i}\rightarrow \infty $, $\left\Vert \Delta ^{\frac{s}{2}}u_{i}\right\Vert
_{L^{\frac{n}{s}}}=1$ and $\overline{u_{i}}=0$. After passing to a
subsequence, we have%
\begin{eqnarray*}
u_{i} &\rightharpoonup &u\text{ weakly in }W^{s,\frac{n}{s}}\left( \mathbb{S}%
^{n}\right) ; \\
\log \int_{\mathbb{S}^{n}}e^{nm_{i}u_{i}}d\mu -\alpha m_{i}^{\frac{n}{s}}
&\rightarrow &\infty ; \\
\left\vert \Delta ^{\frac{s}{2}}u_{i}\right\vert ^{\frac{n}{s}}d\mu
&\rightarrow &\left\vert \Delta ^{\frac{s}{2}}u\right\vert ^{\frac{n}{s}%
}d\mu +\sigma \text{ as measure;} \\
\frac{e^{nm_{i}u_{i}}}{\int_{\mathbb{S}^{n}}e^{nm_{i}u_{i}}d\mu }
&\rightarrow &\nu \text{ as measure.}
\end{eqnarray*}%
Let%
\begin{equation}
\left\{ x\in \mathbb{S}^{n}:\sigma \left( \left\{ x\right\} \right) \geq
\alpha _{s,n}^{-1}\alpha \right\} =\left\{ x_{1},\cdots ,x_{N}\right\} ,
\label{eq4.32}
\end{equation}%
then it follows from Theorem \ref{thm4.1} that%
\begin{equation}
\nu =\sum_{i=1}^{N}\nu _{i}\delta _{x_{i}}.  \label{eq4.33}
\end{equation}%
Moreover $\nu _{i}\geq 0$, $\sum_{i=1}^{N}\nu _{i}=1$ and%
\begin{equation*}
\sum_{i=1}^{N}\nu _{i}p\left( x_{i}\right) =0
\end{equation*}%
for any $p\in \overset{\circ }{\mathcal{P}}_{m}$. It follows that%
\begin{equation*}
\alpha _{s,n}^{-1}\alpha N\leq 1,
\end{equation*}%
and $N\in \mathcal{N}_{m}\left( \mathbb{S}^{n}\right) $. We get%
\begin{equation*}
\alpha \leq \frac{\alpha _{s,n}}{N}\leq \frac{\alpha _{s,n}}{N_{m}\left( 
\mathbb{S}^{n}\right) }.
\end{equation*}%
This contradicts with the choice of $\alpha $.
\end{proof}

Similar as before, the constant $\frac{\alpha _{s,n}}{N_{m}\left( \mathbb{S}%
^{n}\right) }+\varepsilon $ is almost optimal. It is also interesting to
compare Theorem \ref{thm4.2} with \cite[lemma 4.3]{BCY} and \cite[lemma 4.6]%
{CY2}.

\subsection{Paneitz operator in dimension $4$\label{sec4.2}}

Let $\left( M^{4},g\right) $ be a smooth compact Riemannian manifold with
dimension $4$. The Paneitz operator is given by (see \cite{CY2, HY})%
\begin{equation}
Pu=\Delta ^{2}u+2\func{div}\left( Rc\left( \nabla u,e_{i}\right)
e_{i}\right) -\frac{2}{3}\func{div}\left( R\nabla u\right) .  \label{eq4.34}
\end{equation}%
Here $e_{1},e_{2},e_{3},e_{4}$ is a local orthonormal frame with respect to $%
g$. The associated $Q$ curvature is%
\begin{equation}
Q=-\frac{1}{6}\Delta R-\frac{1}{2}\left\vert Rc\right\vert ^{2}+\frac{1}{6}%
R^{2}.  \label{eq4.35}
\end{equation}%
In 4-dimensional conformal geometry, $P$ and $Q$ play the same roles as $%
-\Delta $ and Gauss curvature in 2-dimensional conformal geometry.

For $u\in C^{\infty }\left( M\right) $, let%
\begin{eqnarray}
E\left( u\right) &=&\int_{M}Pu\cdot ud\mu  \label{eq4.36} \\
&=&\int_{M}\left( \left( \Delta u\right) ^{2}-2Rc\left( \nabla u,\nabla
u\right) +\frac{2}{3}R\left\vert \nabla u\right\vert ^{2}\right) d\mu . 
\notag
\end{eqnarray}%
By this formula, we know $E\left( u\right) $ still makes sense for $u\in
H^{2}\left( M\right) =W^{2,2}\left( M\right) $.

In \cite{CY2}, it is shown that if $P\geq 0$ and $\ker P=\left\{ \text{%
constant functions}\right\} $, then for any $u\in H^{2}\left( M\right)
\backslash \left\{ 0\right\} $ with $\overline{u}=0$,%
\begin{equation}
\int_{M}e^{32\pi ^{2}\frac{u^{2}}{E\left( u\right) }}d\mu \leq c\left(
M,g\right) .  \label{eq4.37}
\end{equation}%
In particular%
\begin{equation}
\log \int_{M}e^{4u}d\mu \leq \frac{1}{8\pi ^{2}}E\left( u\right)
+c_{1}\left( M,g\right) .  \label{eq4.38}
\end{equation}%
Note that $32\pi ^{2}=a_{2,4}$ and $\frac{1}{8\pi ^{2}}=\alpha _{2,4}$.

We want to remark that if $P\geq 0$ and $\ker P=\left\{ \text{constant
functions}\right\} $, then for any $u\in H^{2}\left( M\right) $ with $%
\overline{u}=0$,%
\begin{equation}
\left\Vert u\right\Vert _{L^{2}}^{2}\leq c\left( M,g\right) E\left( u\right)
.  \label{eq4.39}
\end{equation}%
On the other hand, it follows from standard elliptic theory (or the
Bochner's identity) that%
\begin{eqnarray*}
\left\Vert u\right\Vert _{H^{2}}^{2} &\leq &c\left( M,g\right) \left(
\left\Vert \Delta u\right\Vert _{L^{2}}^{2}+\left\Vert u\right\Vert
_{L^{2}}^{2}\right) \\
&\leq &c\left( M,g\right) \left( E\left( u\right) +\left\Vert u\right\Vert
_{H^{1}}^{2}\right) \\
&\leq &c\left( M,g\right) E\left( u\right) +\frac{1}{2}\left\Vert
u\right\Vert _{H^{2}}^{2}+c\left( M,g\right) \left\Vert u\right\Vert
_{L^{2}}^{2} \\
&\leq &\frac{1}{2}\left\Vert u\right\Vert _{H^{2}}^{2}+c\left( M,g\right)
E\left( u\right) .
\end{eqnarray*}%
We have used the interpolation inequality in between. It follows that%
\begin{equation}
\left\Vert u\right\Vert _{H^{2}}^{2}\leq c\left( M,g\right) E\left( u\right)
.  \label{eq4.40}
\end{equation}

It is also worth pointing out that on the standard $\mathbb{S}^{4}$, $P\geq
0 $ and $\ker P=\left\{ \text{constant functions}\right\} $. Moreover in 
\cite{Gur, GurV}, some general criterion for such positivity condition to be
valid were derived.

\begin{proposition}
\label{prop4.2}Let $\left( M^{4},g\right) $ be a smooth compact Riemannian
manifold with $P\geq 0$ and $\ker P=\left\{ \text{constant functions}%
\right\} $. Assume $u_{i}\in H^{2}\left( M\right) $ such that $\overline{%
u_{i}}=0$ and $E\left( u_{i}\right) \leq 1$. We also assume $%
u_{i}\rightharpoonup u$ weakly in $H^{2}\left( M\right) $ and%
\begin{equation}
\left( \Delta u_{i}\right) ^{2}d\mu \rightarrow \left( \Delta u\right)
^{2}d\mu +\sigma  \label{eq4.41}
\end{equation}%
as measure. Let $K$ be a compact subset of $M$ and%
\begin{equation}
\kappa =\max_{x\in K}\sigma \left( \left\{ x\right\} \right) .
\label{eq4.42}
\end{equation}

\begin{enumerate}
\item If $\kappa <1$, then for any $1\leq p<\frac{1}{\kappa }$, 
\begin{equation}
\sup_{i}\int_{K}e^{32\pi ^{2}pu_{i}^{2}}d\mu <\infty .  \label{eq4.43}
\end{equation}

\item If $\kappa =1$, then $\sigma =\delta _{x_{0}}$ for some $x_{0}\in K$, $%
u=0$ and after passing to a subsequence,%
\begin{equation}
e^{32\pi ^{2}u_{i}^{2}}\rightarrow 1+c_{0}\delta _{x_{0}}  \label{eq4.44}
\end{equation}%
as measure for some $c_{0}\geq 0$.
\end{enumerate}
\end{proposition}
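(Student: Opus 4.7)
The plan is to run the same argument used for Proposition \ref{prop4.1} and Corollary \ref{cor4.1}, with $s=2$, $n=4$, and with the Moser--Trudinger inequality (\ref{eq4.37}) (normalized by $E(\cdot)$) playing the role of (\ref{eq4.6}). The first step is to establish the local version: if $x\in M$ and $0<p<\sigma(\{x\})^{-1}$, then for some $r>0$,
\begin{equation*}
\sup_i \int_{B_r(x)} e^{32\pi^2 p u_i^2}\,d\mu<\infty.
\end{equation*}
Once this is in hand, part (1) follows from a finite open cover of $K$, exactly as Corollary \ref{cor4.1} is deduced from Proposition \ref{prop4.1}.

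To prove the local bound, set $v_i=u_i-u$, so that $v_i\rightharpoonup 0$ weakly in $H^2$ and strongly in $H^1$ by Rellich. Pick $\varphi\in C^\infty(M)$ with $\varphi|_{B_r(x)}=1$ for a small $r>0$. The Ricci and scalar curvature pieces of $E(\varphi v_i)$ are controlled by $\|\varphi v_i\|_{H^1}^2\to 0$, so
\begin{equation*}
E(\varphi v_i)=\int_M(\Delta(\varphi v_i))^2\,d\mu+o(1).
\end{equation*}
Expanding $\Delta(\varphi v_i)=\varphi\Delta v_i+2\nabla\varphi\cdot\nabla v_i+v_i\Delta\varphi$ and discarding the last two terms (which vanish in $L^2$ by strong $H^1$ convergence), the argument is identical to that in the proof of Proposition \ref{prop4.1}: one obtains
\begin{equation*}
\limsup_{i\to\infty}E(\varphi v_i)\le (1+\varepsilon)\int_M\varphi^2\,d\sigma+c_1(\varepsilon)\int_M\varphi^2(\Delta u)^2\,d\mu.
\end{equation*}
Choosing $\varphi$ sufficiently concentrated near $x$ and $\varepsilon>0$ small forces the right hand side strictly below $1/p_1$ for a fixed $p_1\in(p,\sigma(\{x\})^{-1})$. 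Applying (\ref{eq4.37}) to $\varphi v_i-\overline{\varphi v_i}$ then yields an eventual uniform bound on $\int_M e^{32\pi^2 p_1(\varphi v_i-\overline{\varphi v_i})^2}\,d\mu$. Combining with $u_i^2\le (1+\varepsilon)(v_i-\overline{\varphi v_i})^2+c(\varepsilon)(u^2+(\overline{\varphi v_i})^2)$, H\"older's inequality, $\overline{\varphi v_i}\to 0$, and Lemma \ref{lem4.1} (which gives $e^{au^2}\in L^q$ for every $q<\infty$ since $u\in H^2$) produces the desired estimate on $B_r(x)$.

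For part (2), the compactness $H^2\hookrightarrow H^1$ plus the weak convergence of $(\Delta u_i)^2\,d\mu$ give $\lim_i E(u_i)=E(u)+\sigma(M)$. Since $E(u_i)\le 1$ and $\sigma(\{x_0\})=1$ for some $x_0\in K$, we must have $\sigma(M)=1$ and $E(u)=0$; the hypotheses $P\ge 0$, $\ker P=\{\text{constant functions}\}$ together with $\overline{u}=0$ then force $u=0$, and hence $\sigma=\delta_{x_0}$. After passing to a subsequence so that $u_i\to 0$ a.e., part (1) applied on the compact set $M\setminus B_r(x_0)$ (where $\kappa=0$) gives uniform $L^q$ control of $e^{32\pi^2 u_i^2}$ for every $q<\infty$, so Vitali's theorem yields $L^1$ convergence to $1$ off the ball. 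Combined with the global bound $\int_M e^{32\pi^2 u_i^2}\,d\mu\le c$ (which comes from (\ref{eq4.37}) and $E(u_i)\le 1$), weak-$*$ compactness of bounded Radon measures delivers (\ref{eq4.44}) along a further subsequence. The only genuinely delicate point is keeping track of the curvature lower order terms in $E(\varphi v_i)$, but this is dispatched cleanly by the strong $H^1$ convergence of $v_i$.
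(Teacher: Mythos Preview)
Your proof is correct and follows essentially the same approach as the paper. The only difference is one of economy: the paper observes that the local estimate you re-derive (using $E(\varphi v_i)$ and (\ref{eq4.37})) is already contained in Proposition~\ref{prop4.1} with $s=2$, $n=4$ (using $\|\Delta(\varphi v_i)\|_{L^2}^2$ and (\ref{eq4.6})), since the curvature terms in $E$ are lower order and vanish under strong $H^1$ convergence anyway; so the paper simply cites Proposition~\ref{prop4.1} and focuses only on the genuinely new ingredient, namely deducing $E(u)+\sigma(M)\le 1$ and hence $\sigma(M)\le 1$ from the Paneitz assumptions, which you also establish correctly.
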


\begin{proof}
Since $u_{i}\rightharpoonup u$ weakly in $H^{2}\left( M\right) $, we see $%
u_{i}\rightarrow u$ in $H^{1}\left( M\right) $. It follows that $\overline{u}%
=0$ and%
\begin{equation*}
E\left( u_{i}\right) \rightarrow E\left( u\right) +\sigma \left( M\right) .
\end{equation*}%
Since $E\left( u_{i}\right) \leq 1$, we get%
\begin{equation*}
E\left( u\right) +\sigma \left( M\right) \leq 1.
\end{equation*}%
By the assumption on $P$ we know $E\left( u\right) \geq 0$ and $E\left(
u\right) =0$ if and only if $u=0$, hence $\sigma \left( M\right) \leq 1$.
With these facts at hand, Proposition \ref{prop4.2} follows from Proposition %
\ref{prop4.1} by the same argument as in the proof of Corollary \ref{cor2.1}.
\end{proof}

On the standard $\mathbb{S}^{4}$, we have $P=\Delta ^{2}-2\Delta $ and%
\begin{equation*}
E\left( u\right) =\int_{\mathbb{S}^{4}}\left( \left( \Delta u\right)
^{2}+2\left\vert \nabla u\right\vert ^{2}\right) d\mu \geq \left\Vert \Delta
u\right\Vert _{L^{2}}^{2}
\end{equation*}%
for $u\in H^{2}\left( \mathbb{S}^{4}\right) $. It follows from this
inequality and Theorem \ref{thm4.2} that

\begin{proposition}
\label{prop4.3}If $u\in H^{2}\left( \mathbb{S}^{4}\right) $ such that $\int_{%
\mathbb{S}^{4}}ud\mu =0$ (here $\mu $ is the standard measure on $\mathbb{S}%
^{4}$) and for every $p\in \overset{\circ }{\mathcal{P}}_{m}$,%
\begin{equation}
\left\vert \int_{\mathbb{S}^{4}}pe^{nu}d\mu \right\vert \leq b\left(
p\right) ,  \label{eq4.45}
\end{equation}%
here $b\left( p\right) $ is a nonnegative number depending only on $p$, then
for any $\varepsilon >0$,%
\begin{equation}
\log \int_{\mathbb{S}^{4}}e^{4u}d\mu \leq \left( \frac{1}{8\pi
^{2}N_{m}\left( \mathbb{S}^{4}\right) }+\varepsilon \right) \int_{\mathbb{S}%
^{4}}\left( \left( \Delta u\right) ^{2}+2\left\vert \nabla u\right\vert
^{2}\right) d\mu +c\left( m,b,\varepsilon \right) .  \label{eq4.46}
\end{equation}%
Here $N_{m}\left( \mathbb{S}^{4}\right) $ is defined in Definition \ref%
{def3.1}.
\end{proposition}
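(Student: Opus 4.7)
The plan is to recognize Proposition \ref{prop4.3} as an immediate specialization of Theorem \ref{thm4.2} to the case $s=2$, $n=4$, followed by a one-line upgrade from the pure Paneitz/bi-Laplacian seminorm to the full Dirichlet energy $E(u)$ on $\mathbb{S}^{4}$.

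First, I would verify the constants. With $s=2$ and $n=4$, the formula
\[
\alpha_{s,n}=s\left(\frac{n-s}{n}\left|\mathbb{S}^{n-1}\right|\right)^{\frac{n-s}{s}}\left(\frac{\Gamma\left(\frac{n-s}{2}\right)}{\pi^{\frac{n}{2}}2^{s}\Gamma\left(\frac{s}{2}\right)}\right)^{\frac{n}{s}}
\]
reduces to $\alpha_{2,4}=\tfrac{1}{8\pi^{2}}$, matching the prefactor appearing in (\ref{eq4.46}). Also $\Delta^{s/2}u=\Delta u$ and $L^{n/s}=L^{2}$. Thus Theorem \ref{thm4.2} applied with these parameters gives, for any $u\in H^{2}(\mathbb{S}^{4})$ with $\overline{u}=0$ and satisfying the moment bound $\left|\int_{\mathbb{S}^{4}}pe^{4u}d\mu\right|\le b(p)$ for every $p\in\overset{\circ}{\mathcal{P}}_{m}$, the estimate
\[
\log\int_{\mathbb{S}^{4}}e^{4u}d\mu\le\left(\frac{1}{8\pi^{2}N_{m}(\mathbb{S}^{4})}+\varepsilon\right)\int_{\mathbb{S}^{4}}(\Delta u)^{2}d\mu+c(m,b,\varepsilon).
\]

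Second, I would combine this with the pointwise inequality
\[
\int_{\mathbb{S}^{4}}(\Delta u)^{2}d\mu\le\int_{\mathbb{S}^{4}}\left((\Delta u)^{2}+2|\nabla u|^{2}\right)d\mu=E(u),
\]
which is noted immediately before the statement of the proposition (the extra gradient term is manifestly nonnegative, and on $\mathbb{S}^{4}$ one has $P=\Delta^{2}-2\Delta$ so that $E(u)=\int Pu\cdot u\,d\mu$ expands exactly as displayed). Multiplying by the positive constant $\frac{1}{8\pi^{2}N_{m}(\mathbb{S}^{4})}+\varepsilon$ and substituting into the previous display yields (\ref{eq4.46}).

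There is essentially no obstacle: the content is already in Theorem \ref{thm4.2}, and the passage from $\|\Delta u\|_{L^{2}}^{2}$ to $E(u)$ is just discarding the nonnegative gradient term. The only thing one needs to check carefully is that the moment hypothesis (\ref{eq4.45}) matches the hypothesis of Theorem \ref{thm4.2} verbatim for $s=2$, $n=4$, and that $u\in H^{2}(\mathbb{S}^{4})=W^{2,2}(\mathbb{S}^{4})=W^{s,n/s}(\mathbb{S}^{n})$, which it does.
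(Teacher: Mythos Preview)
Your proposal is correct and follows exactly the paper's own argument: apply Theorem \ref{thm4.2} with $s=2$, $n=4$ (so that $\alpha_{2,4}=\tfrac{1}{8\pi^{2}}$ and the seminorm is $\|\Delta u\|_{L^{2}}^{2}$), and then dominate $\|\Delta u\|_{L^{2}}^{2}$ by $E(u)=\int_{\mathbb{S}^{4}}((\Delta u)^{2}+2|\nabla u|^{2})\,d\mu$. The paper states this reasoning in the two sentences immediately preceding Proposition \ref{prop4.3}.
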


In view of Proposition \ref{prop4.3} and the recent proof of sharp version
of Aubin's Moser-Trudinger inequality on $\mathbb{S}^{2}$ in \cite{GuM}, it
is tempting to conjecture that for $u\in H^{2}\left( \mathbb{S}^{4}\right) $
with $\int_{\mathbb{S}^{4}}ud\mu =0$ and%
\begin{equation}
\int_{\mathbb{S}^{4}}x_{i}e^{4u}d\mu \left( x\right) =0\text{ for }1\leq
i\leq 5\text{,}  \label{eq4.47}
\end{equation}%
we have%
\begin{equation}
\log \left( \frac{1}{\left\vert \mathbb{S}^{4}\right\vert }\int_{\mathbb{S}%
^{4}}e^{4u}d\mu \right) \leq \frac{1}{16\pi ^{2}}\int_{\mathbb{S}^{4}}\left(
\left( \Delta u\right) ^{2}+2\left\vert \nabla u\right\vert ^{2}\right) d\mu
.  \label{eq4.48}
\end{equation}%
In a recent work \cite{Gu}, some progress has been made toward proving (\ref%
{eq4.48}) for axially symmetric functions.

\subsection{$W^{s,\frac{n}{s}}\left( M^{n}\right) $ for odd $s$\label{sec4.3}%
}

Let $s\in \mathbb{N}$ be an odd number strictly less than $n$. Denote%
\begin{equation}
a_{s,n}=\frac{n}{\left\vert \mathbb{S}^{n-1}\right\vert }\left( \frac{\pi ^{%
\frac{n}{2}}2^{s}\Gamma \left( \frac{s+1}{2}\right) }{\Gamma \left( \frac{%
n-s+1}{2}\right) }\right) ^{\frac{n}{n-s}}.  \label{eq4.49}
\end{equation}%
The Moser-Trudinger inequality (see \cite{F}) tells us that for every $u\in
W^{s,\frac{n}{s}}\left( M\right) \backslash \left\{ 0\right\} $ with $%
\overline{u}=0$,%
\begin{equation}
\int_{M}\exp \left( a_{s,n}\frac{\left\vert u\right\vert ^{\frac{n}{n-s}}}{%
\left\Vert \nabla \Delta ^{\frac{s-1}{2}}u\right\Vert _{L^{\frac{n}{s}}}^{%
\frac{n}{n-s}}}\right) d\mu \leq c\left( M,g\right) .  \label{eq4.50}
\end{equation}%
This implies the Moser-Trudinger-Onofri inequality%
\begin{equation}
\log \int_{M}e^{nu}d\mu \leq \alpha _{s,n}\left\Vert \nabla \Delta ^{\frac{%
s-1}{2}}u\right\Vert _{L^{\frac{n}{s}}}^{\frac{n}{s}}+c_{1}\left( M,g\right)
.  \label{eq4.51}
\end{equation}%
Here%
\begin{equation}
\alpha _{s,n}=s\left( \frac{n-s}{n}\left\vert \mathbb{S}^{n-1}\right\vert
\right) ^{\frac{n-s}{s}}\left( \frac{\Gamma \left( \frac{n-s+1}{2}\right) }{%
\pi ^{\frac{n}{2}}2^{s}\Gamma \left( \frac{s+1}{2}\right) }\right) ^{\frac{n%
}{s}}.  \label{eq4.52}
\end{equation}

\begin{proposition}
\label{prop4.4}Assume $s\in \mathbb{N}$ is an odd number strictly less than $%
n$, $u_{i}\in W^{s,\frac{n}{s}}\left( M^{n}\right) $ such that $\overline{%
u_{i}}=0$, $u_{i}\rightharpoonup u$ weakly in $W^{s,\frac{n}{s}}\left(
M\right) $ and%
\begin{equation}
\left\vert \nabla \Delta ^{\frac{s-1}{2}}u_{i}\right\vert ^{\frac{n}{s}}d\mu
\rightarrow \left\vert \nabla \Delta ^{\frac{s-1}{2}}u\right\vert ^{\frac{n}{%
s}}d\mu +\sigma  \label{eq4.53}
\end{equation}%
as measure. If $x\in M$ and $p\in \mathbb{R}$ such that $0<p<\sigma \left(
\left\{ x\right\} \right) ^{-\frac{s}{n-s}}$, then for some $r>0$,%
\begin{equation}
\sup_{i}\int_{B_{r}\left( x\right) }e^{a_{s,n}p\left\vert u_{i}\right\vert ^{%
\frac{n}{n-s}}}d\mu <\infty .  \label{eq4.54}
\end{equation}%
Here%
\begin{equation}
a_{s,n}=\frac{n}{\left\vert \mathbb{S}^{n-1}\right\vert }\left( \frac{\pi ^{%
\frac{n}{2}}2^{s}\Gamma \left( \frac{s+1}{2}\right) }{\Gamma \left( \frac{%
n-s+1}{2}\right) }\right) ^{\frac{n}{n-s}}.  \label{eq4.55}
\end{equation}
\end{proposition}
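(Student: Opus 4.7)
The plan is to mimic the proof of Proposition \ref{prop4.1} with the operator $\Delta^{s/2}$ replaced by $\nabla \Delta^{(s-1)/2}$ throughout. Both operators are of order $s$ and, more importantly, both admit a Leibniz-type commutator expansion
\[
\nabla \Delta^{(s-1)/2}(\varphi v) \;=\; \varphi \, \nabla \Delta^{(s-1)/2} v \;+\; L_\varphi(v),
\]
where $L_\varphi$ is a linear differential operator of order at most $s-1$ acting on $v$, with coefficients built from the derivatives of $\varphi$. (For $s=2k+1$ one first writes $\Delta^k(\varphi v) = \varphi \, \Delta^k v + \text{(terms with at most $s-2$ derivatives of $v$)}$ and then applies $\nabla$.) This structural identity is what drives the proof, and it is as available in the odd-$s$ case as in the even case.

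With this in hand, the argument runs as before. First I fix $p_1 \in (p, \sigma(\{x\})^{-s/(n-s)})$ and $\varepsilon>0$ small so that $(1+\varepsilon)\sigma(\{x\}) < p_1^{-(n-s)/s}$ and $(1+\varepsilon)p < p_1$. Set $v_i = u_i - u$; then $v_i \rightharpoonup 0$ in $W^{s,n/s}(M)$ and Rellich--Kondrachov gives $v_i \to 0$ strongly in $W^{s-1,n/s}(M)$. For any $\varphi \in C^\infty(M)$,
\[
|\nabla \Delta^{(s-1)/2}(\varphi v_i)| \leq |\varphi \, \nabla \Delta^{(s-1)/2} u_i| + |\varphi \, \nabla \Delta^{(s-1)/2} u| + c \sum_{k=0}^{s-1} |D^{s-k}\varphi| \, |D^k v_i|.
\]
Raising to the $n/s$-th power with an $\varepsilon$-Young split and taking $\limsup_{i\to\infty}$, the last sum vanishes by the strong $W^{s-1,n/s}$ convergence, and (\ref{eq4.53}) yields
\[
\limsup_{i\to\infty} \|\nabla \Delta^{(s-1)/2}(\varphi v_i)\|_{L^{n/s}}^{n/s} \leq (1+\varepsilon)\!\int_M |\varphi|^{n/s} d\sigma + c_1(\varepsilon)\!\int_M |\varphi|^{n/s} |\nabla \Delta^{(s-1)/2} u|^{n/s} d\mu.
\]

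Next I pick $\varphi$ equal to $1$ on some $B_r(x)$ and supported so narrowly around $x$ that the right-hand side is strictly less than $p_1^{-(n-s)/s}$; this is possible because $(1+\varepsilon)\sigma(\{x\}) < p_1^{-(n-s)/s}$ and $|\nabla \Delta^{(s-1)/2} u|^{n/s} d\mu$ is absolutely continuous. For large $i$ we therefore have $\|\nabla \Delta^{(s-1)/2}(\varphi v_i)\|_{L^{n/s}}^{n/(n-s)} < 1/p_1$, and the Moser--Trudinger inequality (\ref{eq4.50}) applied to $\varphi v_i - \overline{\varphi v_i}$ gives the uniform bound
\[
\sup_i \int_{B_r(x)} e^{a_{s,n} p_1 |v_i - \overline{\varphi v_i}|^{n/(n-s)}} d\mu \leq c(M,g).
\]
Decomposing $u_i = (v_i - \overline{\varphi v_i}) + u + \overline{\varphi v_i}$ and applying the convexity estimate $(a+b+c)^{n/(n-s)} \leq (1+\varepsilon) a^{n/(n-s)} + c(\varepsilon)(b^{n/(n-s)} + c^{n/(n-s)})$ inside the exponential, one factors $e^{a_{s,n} |u_i|^{n/(n-s)}}$ into three pieces whose $L^{p_1/(1+\varepsilon)}$, $L^q$ (any $q<\infty$), and uniform-to-$1$ control is furnished respectively by the previous display, the odd-$s$ analog of Lemma \ref{lem4.1} (identical proof with (\ref{eq4.50}) in place of (\ref{eq4.6})), and $\overline{\varphi v_i} \to 0$. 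A single application of H\"older's inequality with exponent $p_1/(1+\varepsilon) > p$ then delivers (\ref{eq4.54}).

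There is no substantive obstacle in this argument beyond verifying the Leibniz/commutator identity at the outset; everything else simply recycles the scheme of Proposition \ref{prop4.1}. The one step requiring care is bookkeeping for the lower-order pieces $L_\varphi(v_i)$, which is notationally heavier for odd $s$; however, since they involve only derivatives of $v_i$ of order $\leq s-1$, they sit in an $L^{n/s}$-compact family and contribute nothing to the relevant $\limsup$.
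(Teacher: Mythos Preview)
Your proposal is correct and takes essentially the same approach as the paper, which simply states that the proof is ``almost identical to the proof of Proposition \ref{prop4.1}'' and omits it. You have carried out exactly that translation, replacing $\Delta^{s/2}$ by $\nabla\Delta^{(s-1)/2}$ and verifying the one nontrivial point---that the commutator $[\nabla\Delta^{(s-1)/2},\varphi]$ is a differential operator of order at most $s-1$, so its contribution vanishes in the $\limsup$ by compactness of $W^{s,n/s}\hookrightarrow W^{s-1,n/s}$.
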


Since the proof of Proposition \ref{prop4.4} is almost identical to the
proof of Proposition \ref{prop4.1}, we omit it here. The same will happen to
Corollary \ref{cor4.2} and Theorems \ref{thm4.3}, \ref{thm4.4} below.

\begin{corollary}
\label{cor4.2}Assume $u_{i}\in W^{s,\frac{n}{s}}\left( M\right) $ such that $%
\overline{u_{i}}=0$ and $\left\Vert \nabla \Delta ^{\frac{s-1}{2}%
}u_{i}\right\Vert _{L^{\frac{n}{s}}}\leq 1$. We also assume $%
u_{i}\rightharpoonup u$ weakly in $W^{s,\frac{n}{s}}\left( M\right) $ and%
\begin{equation}
\left\vert \nabla \Delta ^{\frac{s-1}{2}}u_{i}\right\vert ^{\frac{n}{s}}d\mu
\rightarrow \left\vert \nabla \Delta ^{\frac{s-1}{2}}u\right\vert ^{\frac{n}{%
s}}d\mu +\sigma  \label{eq4.56}
\end{equation}%
as measure. Let $K$ be a compact subset of $M$ and%
\begin{equation}
\kappa =\max_{x\in K}\sigma \left( \left\{ x\right\} \right) .
\label{eq4.57}
\end{equation}

\begin{enumerate}
\item If $\kappa <1$, then for any $1\leq p<\kappa ^{-\frac{s}{n-s}}$, 
\begin{equation}
\sup_{i}\int_{K}e^{a_{s,n}p\left\vert u_{i}\right\vert ^{\frac{n}{n-s}}}d\mu
<\infty .  \label{eq4.58}
\end{equation}

\item If $\kappa =1$, then $\sigma =\delta _{x_{0}}$ for some $x_{0}\in K$, $%
u=0$ and after passing to a subsequence,%
\begin{equation}
e^{a_{s,n}\left\vert u_{i}\right\vert ^{\frac{n}{n-s}}}\rightarrow
1+c_{0}\delta _{x_{0}}  \label{eq4.59}
\end{equation}%
as measure for some $c_{0}\geq 0$.
\end{enumerate}
\end{corollary}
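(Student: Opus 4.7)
My plan is to follow the argument for Corollary \ref{cor2.1} essentially verbatim, with Proposition \ref{prop4.4} playing the role of Proposition \ref{prop2.1} and the higher-order differential operator $\nabla\Delta^{(s-1)/2}$ replacing $\nabla$. For part (1) I would fix $p$ with $1\le p<\kappa^{-s/(n-s)}$ and observe that for every $x\in K$, $\sigma(\{x\})\le\kappa$, so
\[p<\kappa^{-s/(n-s)}\le\sigma(\{x\})^{-s/(n-s)}.\]
Proposition \ref{prop4.4} then supplies a radius $r_x>0$ with $\sup_i\int_{B_{r_x}(x)}e^{a_{s,n}p|u_i|^{n/(n-s)}}d\mu<\infty$. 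Compactness of $K$ yields a finite subcover by such balls, and summing the finitely many bounds completes part (1).

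For part (2), I assume $\kappa=1$ and pick $x_0\in K$ with $\sigma(\{x_0\})=1$. Testing (\ref{eq4.56}) on $M$ together with the bound $\|\nabla\Delta^{(s-1)/2}u_i\|_{L^{n/s}}^{n/s}\le 1$ gives
\[\int_M|\nabla\Delta^{(s-1)/2}u|^{n/s}d\mu+\sigma(M)\le 1,\]
which forces $\sigma=\delta_{x_0}$ and $\nabla\Delta^{(s-1)/2}u=0$ a.e. The latter identifies $\Delta^{(s-1)/2}u$ with a constant on the closed manifold $M$; integrating this constant against $d\mu$ and observing that the integral of $\Delta^{(s-1)/2}u$ vanishes on $M$ by the divergence theorem forces $\Delta^{(s-1)/2}u=0$, after which $\overline{u}=0$ pins down $u\equiv 0$. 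A cleaner packaging is to invoke the odd-$s$ analog of the Poincar\'e inequality (\ref{eq4.3}), namely $\|u-\overline{u}\|_{L^{n/s}}\le c(M,g)\|\nabla\Delta^{(s-1)/2}u\|_{L^{n/s}}$, which follows from standard elliptic regularity for $\Delta$.

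With $u=0$ and $\sigma=\delta_{x_0}$ in hand, I pass to a further subsequence along which $u_i\to 0$ almost everywhere, which is available because the Rellich--Kondrachov embedding $W^{s,n/s}(M)\hookrightarrow L^q(M)$ is compact for every finite $q$. For any fixed $r>0$, applying part (1) to the compact set $M\setminus B_r(x_0)$, where the new $\kappa$ equals $0$, shows that $e^{a_{s,n}q|u_i|^{n/(n-s)}}$ is uniformly bounded in $L^1(M\setminus B_r(x_0))$ for every finite $q$. The family $\{e^{a_{s,n}|u_i|^{n/(n-s)}}\}$ is therefore equi-integrable on $M\setminus B_r(x_0)$, so Vitali's theorem upgrades a.e.\ convergence to convergence in $L^1(M\setminus B_r(x_0))$ to the constant $1$. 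Combining this with the global Moser--Trudinger bound $\int_M e^{a_{s,n}|u_i|^{n/(n-s)}}d\mu\le c(M,g)$ coming from (\ref{eq4.50}), a standard diagonal / Banach--Alaoglu extraction yields a weak-$\ast$ limit measure of the form $1+c_0\delta_{x_0}$ with $c_0\ge 0$.

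I expect the most delicate point to be the conclusion $u=0$ in part (2). In Corollary \ref{cor2.1} (the case $s=1$), vanishing of $\nabla u$ together with $\overline{u}=0$ is immediate; but for odd $s\ge 3$ one must bridge the several-derivative gap between $\nabla\Delta^{(s-1)/2}u=0$ and $u=0$. Either iterated use of the divergence identity $\int_M\Delta^k v\,d\mu=0$ on the closed manifold $M$, or a single direct appeal to an odd-$s$ Poincar\'e inequality coming from standard elliptic theory, will carry this out; I view justifying (or citing) that Poincar\'e inequality as the only piece of routine bookkeeping that needs genuine care.
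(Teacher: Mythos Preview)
Your proposal is correct and follows exactly the route the paper intends: the paper omits the proof of Corollary~\ref{cor4.2} entirely, stating only that it is ``almost identical'' to the proofs of Corollary~\ref{cor4.1} and Corollary~\ref{cor2.1}, which is precisely the template you reproduce. Your identification of the one genuinely new wrinkle---bridging the gap from $\nabla\Delta^{(s-1)/2}u=0$ to $u\equiv 0$ when $s\ge 3$---is apt, and both fixes you propose (iterating the identity $\int_M \Delta^k v\,d\mu=0$ together with the fact that harmonic functions on a closed manifold are constant, or invoking the odd-$s$ Poincar\'e inequality analogous to~(\ref{eq4.3})) are valid; the paper implicitly relies on the latter via standard elliptic theory.
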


\begin{theorem}
\label{thm4.3}Let $s\in \mathbb{N}$ be an odd number strictly less than $n$.
Assume $\alpha >0$, $m_{i}>0$, $m_{i}\rightarrow \infty $, $u_{i}\in W^{s,%
\frac{n}{s}}\left( M^{n}\right) $ such that $\overline{u_{i}}=0$ and%
\begin{equation}
\log \int_{M}e^{nm_{i}u_{i}}d\mu \geq \alpha m_{i}^{\frac{n}{s}}.
\label{eq4.60}
\end{equation}%
We also assume $u_{i}\rightharpoonup u$ weakly in $W^{s,\frac{n}{s}}\left(
M\right) $, $\left\vert \nabla \Delta ^{\frac{s-1}{2}}u_{i}\right\vert ^{%
\frac{n}{s}}d\mu \rightarrow \left\vert \nabla \Delta ^{\frac{s-1}{2}%
}u\right\vert ^{\frac{n}{s}}d\mu +\sigma $ as measure and%
\begin{equation}
\frac{e^{nm_{i}u_{i}}}{\int_{M}e^{nm_{i}u_{i}}d\mu }\rightarrow \nu
\label{eq4.61}
\end{equation}%
as measure. Let%
\begin{equation}
\left\{ x\in M:\sigma \left( \left\{ x\right\} \right) \geq \alpha
_{s,n}^{-1}\alpha \right\} =\left\{ x_{1},\cdots ,x_{N}\right\} ,
\label{eq4.62}
\end{equation}%
here%
\begin{equation}
\alpha _{s,n}=s\left( \frac{n-s}{n}\left\vert \mathbb{S}^{n-1}\right\vert
\right) ^{\frac{n-s}{s}}\left( \frac{\Gamma \left( \frac{n-s+1}{2}\right) }{%
\pi ^{\frac{n}{2}}2^{s}\Gamma \left( \frac{s+1}{2}\right) }\right) ^{\frac{n%
}{s}},  \label{eq4.63}
\end{equation}%
then%
\begin{equation}
\nu =\sum_{i=1}^{N}\nu _{i}\delta _{x_{i}},  \label{eq4.64}
\end{equation}%
here $\nu _{i}\geq 0$ and $\sum_{i=1}^{N}\nu _{i}=1$.
\end{theorem}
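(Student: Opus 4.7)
The plan is to run exactly the same argument that proved Theorem \ref{thm2.1} and Theorem \ref{thm4.1}, but with $\|\nabla u\|_{L^n}$ replaced by $\|\nabla\Delta^{(s-1)/2} u\|_{L^{n/s}}$ and with the constants $a_n, \alpha_n$ replaced by $a_{s,n}, \alpha_{s,n}$ defined in (\ref{eq4.55}), (\ref{eq4.63}). The localized Moser--Trudinger type estimate supplied by Proposition \ref{prop4.4} is the only analytic input needed; everything else is bookkeeping with Young's inequality.

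More concretely, I will show: if $x \in M$ satisfies $\sigma(\{x\}) < \alpha_{s,n}^{-1}\alpha$, then $\nu(B_r(x)) = 0$ for some $r>0$. Since $\sigma(M) + \int_M |\nabla\Delta^{(s-1)/2}u|^{n/s} d\mu$ is finite, the set in (\ref{eq4.62}) is automatically finite, so this claim forces $\nu$ to be supported on $\{x_1,\dots,x_N\}$, giving (\ref{eq4.64}) with $\sum \nu_i = 1$ (the latter because $\nu$ is the weak limit of the probability measures in (\ref{eq4.61})).

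To prove the claim, fix $p$ with
\[
\alpha_{s,n}^{\frac{s}{n-s}} \alpha^{-\frac{s}{n-s}} < p < \sigma(\{x\})^{-\frac{s}{n-s}}.
\]
By Proposition \ref{prop4.4} there exists $r>0$ with
\[
\sup_i \int_{B_r(x)} e^{a_{s,n} p |u_i|^{n/(n-s)}} d\mu \le c.
\]
Young's inequality (with conjugate exponents $n/(n-s)$ and $n/s$) gives, using the identity $\alpha_{s,n} = s\bigl((n-s)/a_{s,n}\bigr)^{(n-s)/s}$ that is immediate from (\ref{eq4.55}) and (\ref{eq4.63}),
\[
n m_i u_i \le a_{s,n} p |u_i|^{n/(n-s)} + \frac{\alpha_{s,n}}{p^{(n-s)/s}} m_i^{n/s}.
\]
Integrating over $B_r(x)$ and dividing by the lower bound $\int_M e^{n m_i u_i} d\mu \ge e^{\alpha m_i^{n/s}}$ from (\ref{eq4.60}) yields
\[
\frac{\int_{B_r(x)} e^{n m_i u_i} d\mu}{\int_M e^{n m_i u_i} d\mu} \le c\, \exp\!\left[\left(\frac{\alpha_{s,n}}{p^{(n-s)/s}} - \alpha\right) m_i^{n/s}\right].
\]
The choice of $p$ makes the exponent strictly negative, and $m_i \to \infty$, so the right-hand side tends to $0$; passing to the $\liminf$ gives $\nu(B_r(x))=0$, as required.

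The only real obstacle is a purely algebraic one, namely verifying that the Young-inequality constant comes out exactly as $\alpha_{s,n}/p^{(n-s)/s}$ with $\alpha_{s,n}$ defined by (\ref{eq4.63}); this is the identity $\alpha_{s,n} a_{s,n}^{(n-s)/s} = s(n-s)^{(n-s)/s}$, which is a direct substitution from the formulas for $a_{s,n}$ in (\ref{eq4.55}) and $\alpha_{s,n}$ in (\ref{eq4.63}). Once this is checked, the proof is literally the same as that of Theorem \ref{thm4.1}, and in particular no new analytic ingredient beyond Proposition \ref{prop4.4} is required.
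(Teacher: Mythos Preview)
Your proposal is correct and follows exactly the approach the paper indicates: the paper explicitly omits the proof of Theorem \ref{thm4.3}, stating it is ``almost identical'' to that of Theorem \ref{thm4.1}, and your argument is precisely that proof with Proposition \ref{prop4.4} in place of Proposition \ref{prop4.1} and the odd-$s$ constants $a_{s,n},\alpha_{s,n}$ from (\ref{eq4.55}), (\ref{eq4.63}) substituted throughout. The algebraic identity $\alpha_{s,n}a_{s,n}^{(n-s)/s}=s(n-s)^{(n-s)/s}$ you flag is indeed what makes the Young-inequality constant come out right, and it follows by direct multiplication of (\ref{eq4.55}) and (\ref{eq4.63}).
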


\begin{theorem}
\label{thm4.4}Let $s\in \mathbb{N}$ be an odd number strictly less than $n$.
If $u\in W^{s,\frac{n}{s}}\left( \mathbb{S}^{n}\right) $ such that $\int_{%
\mathbb{S}^{n}}ud\mu =0$ (here $\mu $ is the standard measure on $\mathbb{S}%
^{n}$) and for every $p\in \overset{\circ }{\mathcal{P}}_{m}$,%
\begin{equation}
\left\vert \int_{\mathbb{S}^{n}}pe^{nu}d\mu \right\vert \leq b\left(
p\right) ,  \label{eq4.65}
\end{equation}%
here $b\left( p\right) $ is a nonnegative number depending only on $p$, then
for any $\varepsilon >0$,%
\begin{equation}
\log \int_{\mathbb{S}^{n}}e^{nu}d\mu \leq \left( \frac{\alpha _{s,n}}{%
N_{m}\left( \mathbb{S}^{n}\right) }+\varepsilon \right) \left\Vert \nabla
\Delta ^{\frac{s-1}{2}}u\right\Vert _{L^{\frac{n}{s}}}^{\frac{n}{s}}+c\left(
m,n,b,\varepsilon \right) .  \label{eq4.66}
\end{equation}%
Here $N_{m}\left( \mathbb{S}^{n}\right) $ is defined in Definition \ref%
{def3.1} and%
\begin{equation}
\alpha _{s,n}=s\left( \frac{n-s}{n}\left\vert \mathbb{S}^{n-1}\right\vert
\right) ^{\frac{n-s}{s}}\left( \frac{\Gamma \left( \frac{n-s+1}{2}\right) }{%
\pi ^{\frac{n}{2}}2^{s}\Gamma \left( \frac{s+1}{2}\right) }\right) ^{\frac{n%
}{s}}.  \label{eq4.67}
\end{equation}
\end{theorem}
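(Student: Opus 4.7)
The plan is to mimic the contradiction argument used in Theorems \ref{thm3.1} and \ref{thm4.2}, substituting Theorem \ref{thm4.3} for Theorem \ref{thm4.1} at the measure-theoretic step. Set $\alpha = \alpha_{s,n}/N_m(\mathbb{S}^n) + \varepsilon$ and assume, toward a contradiction, that \eqref{eq4.66} fails. Then I can extract a sequence $v_i \in W^{s,\frac{n}{s}}(\mathbb{S}^n)$ with $\overline{v_i} = 0$, with $|\int_{\mathbb{S}^n} p e^{nv_i}\,d\mu| \leq b(p)$ for every $p \in \mathring{\mathcal{P}}_m$, and with
\begin{equation*}
\log \int_{\mathbb{S}^n} e^{nv_i}\,d\mu - \alpha \left\Vert \nabla \Delta^{\frac{s-1}{2}} v_i \right\Vert_{L^{\frac{n}{s}}}^{\frac{n}{s}} \rightarrow \infty.
\end{equation*}
In particular $\int_{\mathbb{S}^n} e^{nv_i}\,d\mu \to \infty$, so \eqref{eq4.51} forces $\|\nabla \Delta^{(s-1)/2} v_i\|_{L^{n/s}} \to \infty$.

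Next I normalize: set $m_i = \|\nabla \Delta^{(s-1)/2} v_i\|_{L^{n/s}}$ and $u_i = v_i/m_i$, so $\overline{u_i} = 0$ and $\|\nabla \Delta^{(s-1)/2} u_i\|_{L^{n/s}} = 1$. After passing to a subsequence I arrange
\begin{equation*}
u_i \rightharpoonup u \text{ weakly in } W^{s,\frac{n}{s}}(\mathbb{S}^n),\qquad \left\vert \nabla \Delta^{\frac{s-1}{2}} u_i \right\vert^{\frac{n}{s}} d\mu \rightarrow \left\vert \nabla \Delta^{\frac{s-1}{2}} u \right\vert^{\frac{n}{s}} d\mu + \sigma,
\end{equation*}
and $e^{nm_i u_i}/\int_{\mathbb{S}^n} e^{nm_i u_i}\,d\mu \rightarrow \nu$, all in the sense of weak-$\ast$ convergence of measures. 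Defining $\{x_1,\ldots,x_N\} = \{x \in \mathbb{S}^n : \sigma(\{x\}) \geq \alpha_{s,n}^{-1}\alpha\}$, Theorem \ref{thm4.3} yields $\nu = \sum_{i=1}^N \nu_i \delta_{x_i}$ with $\nu_i \geq 0$ and $\sum \nu_i = 1$.

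The crucial step, and the only part that requires real attention beyond recycling the even-$s$ proof, is to pass the moment condition to the limit: dividing $|\int_{\mathbb{S}^n} p\, e^{nv_i}\,d\mu| \leq b(p)$ by $\int_{\mathbb{S}^n} e^{nv_i}\,d\mu \to \infty$ and using continuity of $p$ on the compact space $\mathbb{S}^n$ together with weak-$\ast$ convergence gives
\begin{equation*}
\sum_{i=1}^N \nu_i p(x_i) = \int_{\mathbb{S}^n} p\,d\nu = 0 \quad \text{for every } p \in \mathring{\mathcal{P}}_m.
\end{equation*}
Hence $N \in \mathcal{N}_m(\mathbb{S}^n)$, so $N \geq N_m(\mathbb{S}^n)$. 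Finally, $\sigma(\mathbb{S}^n) \leq 1$ combined with $\sigma(\{x_j\}) \geq \alpha_{s,n}^{-1}\alpha$ for each $j$ gives $\alpha_{s,n}^{-1}\alpha \cdot N \leq 1$, that is
\begin{equation*}
\alpha \leq \frac{\alpha_{s,n}}{N} \leq \frac{\alpha_{s,n}}{N_m(\mathbb{S}^n)},
\end{equation*}
contradicting the definition of $\alpha$.

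The main obstacle is bookkeeping rather than conceptual: one must check that Proposition \ref{prop4.4}, Corollary \ref{cor4.2}, and Theorem \ref{thm4.3} for odd $s$ genuinely allow the same cutoff-and-truncation argument as in Proposition \ref{prop4.1}, since the natural seminorm is now $\|\nabla \Delta^{(s-1)/2} u\|_{L^{n/s}}$ rather than $\|\Delta^{s/2} u\|_{L^{n/s}}$. The standard elliptic estimate $\|u\|_{W^{s,n/s}} \leq c(M,g)(\|\nabla \Delta^{(s-1)/2} u\|_{L^{n/s}} + \|u\|_{L^{n/s}})$ and the Poincaré inequality substitute without change, so the localization argument for $\varphi v_i$ in the proof of Proposition \ref{prop4.1} goes through verbatim after replacing $\Delta^{s/2}$ by $\nabla \Delta^{(s-1)/2}$ and adjusting the lower-order commutator terms. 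Once those auxiliary results are in hand, the contradiction argument above closes.
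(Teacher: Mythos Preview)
Your proof is correct and follows exactly the approach the paper intends: the paper omits the proof of Theorem~\ref{thm4.4} entirely, stating only that it is almost identical to that of Theorem~\ref{thm4.2}, and your argument is precisely that proof with $\nabla\Delta^{(s-1)/2}$ replacing $\Delta^{s/2}$, \eqref{eq4.51} replacing \eqref{eq4.7}, and Theorem~\ref{thm4.3} replacing Theorem~\ref{thm4.1}. Your added remarks on passing the moment condition to the limit and on the bookkeeping for the odd-$s$ seminorm are accurate and simply make explicit what the paper leaves implicit.
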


\subsection{Functions on compact surfaces with boundary\label{sec4.4}}

In this subsection, we assume $\left( M^{2},g\right) $ is a smooth compact
surface with nonempty boundary and a smooth Riemannian metric $g$.

\subsubsection{Functions with zero boundary value}

We denote $H_{0}^{1}\left( M\right) =W_{0}^{1,2}\left( M\right) $. It
follows from \cite{CY1} that for every $u\in H_{0}^{1}\left( M\right)
\backslash \left\{ 0\right\} $,%
\begin{equation}
\int_{M}e^{4\pi \frac{u^{2}}{\left\Vert \nabla u\right\Vert _{L^{2}}^{2}}%
}d\mu \leq c\left( M,g\right) .  \label{eq4.68}
\end{equation}%
As a consequence,%
\begin{equation}
\log \int_{M}e^{2u}d\mu \leq \frac{1}{4\pi }\left\Vert \nabla u\right\Vert
_{L^{2}}^{2}+c_{1}\left( M,g\right) .  \label{eq4.69}
\end{equation}

\begin{lemma}
\label{lem4.2}For $u\in H_{0}^{1}\left( M\right) $ and $a>0$, we have%
\begin{equation}
\int_{M}e^{au^{2}}d\mu <\infty .  \label{eq4.70}
\end{equation}
\end{lemma}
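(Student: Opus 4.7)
The plan is to mimic the proof of Lemma~\ref{lem2.1} (and Lemma~\ref{lem4.1}), but tailored to the zero-boundary setting, where the key simplification is that the Moser--Trudinger inequality (\ref{eq4.68}) applies directly to any $H_0^1$ function without needing to subtract a mean. As before, I may assume $u$ is unbounded, since otherwise (\ref{eq4.70}) is trivial.

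First I would fix a small parameter $\varepsilon>0$ to be determined, and use density of $C_c^{\infty}(M^{\circ})$ in $H_0^1(M)$ (or equivalently density of smooth functions vanishing on $\partial M$) to choose $v$ smooth with $\|u-v\|_{H^1(M)}<\varepsilon$. Set $w=u-v$, so that $w\in H_0^1(M)$ with $\|\nabla w\|_{L^2}<\varepsilon$, and $v$ is bounded on $M$. Then
\begin{equation*}
|u|\leq \|v\|_{L^{\infty}}+|w|,\qquad |u|^{2}\leq 2\|v\|_{L^{\infty}}^{2}+2|w|^{2},
\end{equation*}
so that
\begin{equation*}
e^{a|u|^{2}}\leq e^{2a\|v\|_{L^{\infty}}^{2}}\cdot e^{2a|w|^{2}}.
\end{equation*}

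Next I would choose $\varepsilon$ so small that $2a\|\nabla w\|_{L^2}^{2}\leq 4\pi$, which allows the bound $2a|w|^{2}\leq 4\pi\, w^{2}/\|\nabla w\|_{L^{2}}^{2}$. Then (\ref{eq4.68}) applied to $w\in H_0^1(M)$ yields
\begin{equation*}
\int_{M}e^{2a|w|^{2}}d\mu \leq \int_{M}e^{4\pi w^{2}/\|\nabla w\|_{L^{2}}^{2}}d\mu \leq c(M,g),
\end{equation*}
and the conclusion follows by pulling the constant factor $e^{2a\|v\|_{L^{\infty}}^{2}}$ outside the integral.

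There is essentially no obstacle: the proof is simpler than in Lemma~\ref{lem2.1} precisely because the $H_0^1$ version of the Moser--Trudinger inequality in (\ref{eq4.68}) already has no mean-zero normalization, so the intermediate step of splitting off $\overline{w}$ is unnecessary. The only minor point to verify is that smooth functions with compact support in the interior of $M$ are dense in $H_0^1(M)$, which is standard.
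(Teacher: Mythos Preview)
Your proof is correct and follows essentially the same approach as the paper: approximate $u$ by a smooth compactly supported function $v$, set $w=u-v$, and apply the Moser--Trudinger inequality (\ref{eq4.68}) to the small remainder $w\in H_0^1(M)$. Your observation that the mean-subtraction step from Lemma~\ref{lem2.1} can be omitted here (since (\ref{eq4.68}) requires no normalization) is exactly the simplification the $H_0^1$ setting affords, and the paper's terse instruction to ``proceed exactly the same way as in the proof of Lemma~\ref{lem2.1}'' should be read with this in mind.
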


\begin{proof}
Indeed, fix $\varepsilon >0$ tiny, we can find $v\in C_{c}^{\infty }\left(
M\right) $ such that $\left\Vert \nabla u-\nabla v\right\Vert
_{L^{2}}<\varepsilon $. With this $v$ and (\ref{eq4.68}) at hands, we can
proceed exactly the same way as in the proof of Lemma \ref{lem2.1}.
\end{proof}

\begin{proposition}
\label{prop4.5}Assume $u_{i}\in H_{0}^{1}\left( M\right) $ such that $%
u_{i}\rightharpoonup u$ weakly in $H_{0}^{1}\left( M\right) $, and%
\begin{equation}
\left\vert \nabla u_{i}\right\vert ^{2}d\mu \rightarrow \left\vert \nabla
u\right\vert ^{2}d\mu +\sigma  \label{eq4.71}
\end{equation}%
as measure. If $x\in M$, $p\in \mathbb{R}$ satisfies $0<p<\frac{1}{\sigma
\left( \left\{ x\right\} \right) }$, then for some $r>0$,%
\begin{equation}
\sup_{i}\int_{B_{r}\left( x\right) }e^{4\pi pu_{i}^{2}}d\mu <\infty .
\label{eq4.72}
\end{equation}
\end{proposition}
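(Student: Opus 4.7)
The plan is to closely mirror the argument used for Proposition \ref{prop2.1}, with two simplifications: since \eqref{eq4.68} applies to every function in $H_{0}^{1}\left( M\right) $ (with no mean-zero hypothesis), we do not need to subtract averages; and for any $\varphi \in C^{\infty }\left( M\right) $ and $v\in H_{0}^{1}\left( M\right) $, the product $\varphi v$ automatically lies in $H_{0}^{1}\left( M\right) $, so cutoffs can be chosen freely.

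First fix $p_{1}\in \left( p,\sigma \left( \left\{ x\right\} \right) ^{-1}\right) $, and pick $\varepsilon >0$ small enough that $\left( 1+\varepsilon \right) \sigma \left( \left\{ x\right\} \right) <1/p_{1}$ and $\left( 1+\varepsilon \right) p<p_{1}$. Set $v_{i}=u_{i}-u$, so $v_{i}\rightharpoonup 0$ weakly in $H_{0}^{1}\left( M\right) $ and $v_{i}\rightarrow 0$ in $L^{2}\left( M\right) $ by Rellich-Kondrachov. For any $\varphi \in C^{\infty }\left( M\right) $, the triangle inequality yields
\begin{equation*}
\left\Vert \nabla \left( \varphi v_{i}\right) \right\Vert _{L^{2}}^{2}\leq \left( 1+\varepsilon \right) \left\Vert \varphi \nabla u_{i}\right\Vert _{L^{2}}^{2}+c\left( \varepsilon \right) \left\Vert \varphi \nabla u\right\Vert _{L^{2}}^{2}+c\left( \varepsilon \right) \left\Vert v_{i}\nabla \varphi \right\Vert _{L^{2}}^{2}.
\end{equation*}
Letting $i\rightarrow \infty $ and using \eqref{eq4.71} together with $v_{i}\to 0$ in $L^{2}$, one obtains
\begin{equation*}
\limsup_{i\rightarrow \infty }\left\Vert \nabla \left( \varphi v_{i}\right) \right\Vert _{L^{2}}^{2}\leq \left( 1+\varepsilon \right) \int_{M}\varphi ^{2}d\sigma +c_{1}\left( \varepsilon \right) \int_{M}\varphi ^{2}\left\vert \nabla u\right\vert ^{2}d\mu.
\end{equation*}

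Next, choose $\varphi \in C^{\infty }\left( M\right) $ with $\left. \varphi \right\vert _{B_{r}\left( x\right) }=1$ for some small $r>0$ and support concentrated sufficiently close to $x$, so that the right-hand side above is strictly less than $1/p_{1}$. Then for $i$ large, $\left\Vert \nabla \left( \varphi v_{i}\right) \right\Vert _{L^{2}}^{2}<1/p_{1}$. Applying \eqref{eq4.68} to $\varphi v_{i}\in H_{0}^{1}\left( M\right) $ gives
\begin{equation*}
\int_{B_{r}\left( x\right) }e^{4\pi p_{1}v_{i}^{2}}d\mu \leq \int_{M}e^{4\pi p_{1}\left( \varphi v_{i}\right) ^{2}}d\mu \leq \int_{M}e^{4\pi \frac{\left( \varphi v_{i}\right) ^{2}}{\left\Vert \nabla \left( \varphi v_{i}\right) \right\Vert _{L^{2}}^{2}}}d\mu \leq c\left( M,g\right).
\end{equation*}

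Finally, from $u_{i}=v_{i}+u$ and the elementary inequality $u_{i}^{2}\leq \left( 1+\varepsilon \right) v_{i}^{2}+c\left( \varepsilon \right) u^{2}$ we get
\begin{equation*}
e^{4\pi pu_{i}^{2}}\leq e^{4\pi p\left( 1+\varepsilon \right) v_{i}^{2}}\cdot e^{4\pi pc\left( \varepsilon \right) u^{2}}.
\end{equation*}
The first factor is bounded in $L^{p_{1}/\left( \left( 1+\varepsilon \right) p\right) }\left( B_{r}\left( x\right) \right) $ by the previous display (and $p_{1}/\left( \left( 1+\varepsilon \right) p\right) >1$), while the second factor lies in $L^{q}\left( B_{r}\left( x\right) \right) $ for every $q<\infty $ by Lemma \ref{lem4.2}. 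Hölder's inequality then yields the desired uniform bound. The only delicate step is choosing $\varphi $ so that both the cutoff-error term $\int \varphi ^{2}\left\vert \nabla u\right\vert ^{2}d\mu $ and the $\sigma$-mass $\int \varphi ^{2}d\sigma $ can simultaneously be made small away from the single atom at $x$; this is standard because $\left\vert \nabla u\right\vert ^{2}d\mu $ is absolutely continuous and $\sigma $ has at most countably many atoms, which should present no real obstacle.
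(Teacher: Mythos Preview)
Your proof is correct and follows the paper's argument essentially verbatim: the same choice of $p_{1}$ and $\varepsilon$, the same cutoff estimate on $\left\Vert \nabla(\varphi v_{i})\right\Vert_{L^{2}}^{2}$, the same application of \eqref{eq4.68} to $\varphi v_{i}\in H_{0}^{1}(M)$, and the same H\"older splitting at the end. Your closing remark about ``countably many atoms'' of $\sigma$ is a slight red herring---what makes the choice of $\varphi$ work is simply that $\int\varphi^{2}\,d\sigma\to\sigma(\{x\})$ and $\int\varphi^{2}|\nabla u|^{2}\,d\mu\to 0$ as the support of $\varphi$ shrinks to $\{x\}$, by outer regularity of a finite measure and absolute continuity of $|\nabla u|^{2}\,d\mu$ respectively---but the step itself is fine.
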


\begin{proof}
Fix $p_{1}\in \left( p,\frac{1}{\sigma \left( \left\{ x\right\} \right) }%
\right) $, then%
\begin{equation}
\sigma \left( \left\{ x\right\} \right) <\frac{1}{p_{1}}.  \label{eq4.73}
\end{equation}%
We can find a $\varepsilon >0$ such that%
\begin{equation}
\left( 1+\varepsilon \right) \sigma \left( \left\{ x\right\} \right) <\frac{1%
}{p_{1}}  \label{eq4.74}
\end{equation}%
and%
\begin{equation}
\left( 1+\varepsilon \right) p<p_{1}.  \label{eq4.75}
\end{equation}%
Let $v_{i}=u_{i}-u$, then $v_{i}\rightharpoonup 0$ weakly in $%
H_{0}^{1}\left( M\right) $, $v_{i}\rightarrow 0$ in $L^{2}\left( M\right) $.
For any $\varphi \in C^{\infty }\left( M\right) $, we have%
\begin{eqnarray*}
&&\left\Vert \nabla \left( \varphi v_{i}\right) \right\Vert _{L^{2}}^{2} \\
&\leq &\left( \left\Vert \varphi \nabla v_{i}\right\Vert _{L^{2}}+\left\Vert
v_{i}\nabla \varphi \right\Vert _{L^{2}}\right) ^{2} \\
&\leq &\left( \left\Vert \varphi \nabla u_{i}\right\Vert _{L^{2}}+\left\Vert
\varphi \nabla u\right\Vert _{L^{2}}+\left\Vert v_{i}\nabla \varphi
\right\Vert _{L^{2}}\right) ^{2} \\
&\leq &\left( 1+\varepsilon \right) \left\Vert \varphi \nabla
u_{i}\right\Vert _{L^{2}}^{2}+c\left( \varepsilon \right) \left\Vert \varphi
\nabla u\right\Vert _{L^{n}}^{n}+c\left( \varepsilon \right) \left\Vert
v_{i}\nabla \varphi \right\Vert _{L^{n}}^{n}.
\end{eqnarray*}%
It follows that%
\begin{eqnarray*}
&&\lim \sup_{i\rightarrow \infty }\left\Vert \nabla \left( \varphi
v_{i}\right) \right\Vert _{L^{2}}^{2} \\
&\leq &\left( 1+\varepsilon \right) \left( \int_{M}\varphi ^{2}d\sigma
+\int_{M}\varphi ^{2}\left\vert \nabla u\right\vert ^{2}d\mu \right)
+c\left( \varepsilon \right) \left\Vert \varphi \nabla u\right\Vert
_{L^{2}}^{2} \\
&=&\left( 1+\varepsilon \right) \int_{M}\varphi ^{2}d\sigma +c_{1}\left(
\varepsilon \right) \int_{M}\varphi ^{2}\left\vert \nabla u\right\vert
^{2}d\mu .
\end{eqnarray*}%
We can find a $\varphi \in C^{\infty }\left( M\right) $ such that $\left.
\varphi \right\vert _{B_{r}\left( x\right) }=1$ for some $r>0$ and%
\begin{equation*}
\left( 1+\varepsilon \right) \int_{M}\varphi ^{2}d\sigma +c_{1}\left(
\varepsilon \right) \int_{M}\varphi ^{2}\left\vert \nabla u\right\vert
^{2}d\mu <\frac{1}{p_{1}}.
\end{equation*}%
Hence for $i$ large enough,%
\begin{equation*}
\left\Vert \nabla \left( \varphi v_{i}\right) \right\Vert _{L^{2}}^{2}<\frac{%
1}{p_{1}}.
\end{equation*}%
Note that $\varphi v_{i}\in H_{0}^{1}\left( M\right) $. We have%
\begin{eqnarray*}
\int_{B_{r}\left( x\right) }e^{4\pi p_{1}v_{i}^{2}}d\mu &\leq
&\int_{M}e^{4\pi p_{1}\left( \varphi v_{i}\right) ^{2}}d\mu \\
&\leq &\int_{M}e^{4\pi \frac{\left( \varphi v_{i}\right) ^{2}}{\left\Vert
\nabla \left( \varphi v_{i}\right) \right\Vert _{L^{2}}^{2}}}d\mu \\
&\leq &c\left( M,g\right) .
\end{eqnarray*}%
Next we observe that%
\begin{equation*}
u_{i}^{2}=\left( v_{i}+u\right) ^{2}\leq \left( 1+\varepsilon \right)
v_{i}^{2}+c\left( \varepsilon \right) u^{2},
\end{equation*}%
hence%
\begin{equation*}
e^{4\pi u_{i}^{2}}\leq e^{4\pi \left( 1+\varepsilon \right)
v_{i}^{2}}e^{c\left( \varepsilon \right) u^{2}}.
\end{equation*}%
Since $e^{4\pi \left( 1+\varepsilon \right) v_{i}^{2}}$ is bounded in $L^{%
\frac{p_{1}}{1+\varepsilon }}\left( B_{r}\left( x\right) \right) $, $%
e^{c\left( \varepsilon \right) u^{2}}\in L^{q}\left( B_{r}\left( x\right)
\right) $ for any $0<q<\infty $ (by Lemma \ref{lem4.2}), and $\frac{p_{1}}{%
1+\varepsilon }>p$, it follows from Holder inequality that $e^{4\pi
u_{i}^{2}}$ is bounded in $L^{p}\left( B_{r}\left( x\right) \right) $.
\end{proof}

\begin{corollary}
\label{cor4.3}Assume $u_{i}\in H_{0}^{1}\left( M\right) $ such that $%
\left\Vert \nabla u_{i}\right\Vert _{L^{2}}\leq 1$. We also assume $%
u_{i}\rightharpoonup u$ weakly in $H_{0}^{1}\left( M\right) $ and%
\begin{equation}
\left\vert \nabla u_{i}\right\vert ^{2}d\mu \rightarrow \left\vert \nabla
u\right\vert ^{2}d\mu +\sigma  \label{eq4.76}
\end{equation}%
as measure. Let $K$ be a compact subset of $M$ and%
\begin{equation}
\kappa =\max_{x\in K}\sigma \left( \left\{ x\right\} \right) .
\label{eq4.77}
\end{equation}

\begin{enumerate}
\item If $\kappa <1$, then for any $1\leq p<\frac{1}{\kappa }$, 
\begin{equation}
\sup_{i}\int_{K}e^{4\pi p\left\vert u_{i}\right\vert ^{\frac{n}{n-1}}}d\mu
<\infty .  \label{eq4.78}
\end{equation}

\item If $\kappa =1$, then $\sigma =\delta _{x_{0}}$ for some $x_{0}\in K$, $%
u=0$ and after passing to a subsequence,%
\begin{equation}
e^{4\pi u_{i}^{2}}\rightarrow 1+c_{0}\delta _{x_{0}}  \label{eq4.79}
\end{equation}%
as measure for some $c_{0}\geq 0$.
\end{enumerate}
\end{corollary}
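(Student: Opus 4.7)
The plan is to mirror the two-case argument of Corollary~\ref{cor2.1}, replacing Proposition~\ref{prop2.1} by Proposition~\ref{prop4.5} and the zero-mean normalization $\overline{u_{i}}=0$ by the Dirichlet boundary condition $u\in H_{0}^{1}(M)$.

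For $\kappa<1$, I would fix $p\in[1,1/\kappa)$ and observe that every $x\in K$ satisfies $\sigma(\{x\})\leq\kappa<1/p$, so $p<\sigma(\{x\})^{-1}$. Proposition~\ref{prop4.5} then provides, for each $x\in K$, a radius $r_{x}>0$ with
\[
\sup_{i}\int_{B_{r_{x}}(x)}e^{4\pi p\, u_{i}^{2}}\,d\mu<\infty.
\]
Compactness of $K$ extracts a finite subcover $\{B_{r_{x_{k}}}(x_{k})\}_{k=1}^{N}$, and summing the $N$ finite bounds yields (\ref{eq4.78}).

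For $\kappa=1$, I first identify $x_{0}\in K$ with $\sigma(\{x_{0}\})=1$. Testing the weak-$*$ convergence (\ref{eq4.76}) against the constant function $1$ (legitimate since $M$ is compact) and using $\|\nabla u_{i}\|_{L^{2}}\leq 1$ gives
\[
\int_{M}|\nabla u|^{2}\,d\mu+\sigma(M)\leq 1.
\]
Combined with $\sigma(M)\geq\sigma(\{x_{0}\})=1$, this forces $\nabla u\equiv 0$ and $\sigma=\delta_{x_{0}}$; since $u\in H_{0}^{1}(M)$ is constant, $u=0$. After extracting a subsequence I may assume $u_{i}\to 0$ almost everywhere. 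For each fixed $r>0$ the first case applies on the compact set $M\setminus B_{r}(x_{0})$ (where $\sigma$ vanishes identically) with arbitrary $p<\infty$, so $e^{4\pi u_{i}^{2}}$ is uniformly bounded in $L^{q}(M\setminus B_{r}(x_{0}))$ for every $q<\infty$. Vitali's theorem then yields $e^{4\pi u_{i}^{2}}\to 1$ in $L^{1}(M\setminus B_{r}(x_{0}))$. Combined with the uniform $L^{1}(M)$-bound coming from (\ref{eq4.68}), passing to a further subsequence produces a weak-$*$ limit of the form $1+c_{0}\delta_{x_{0}}$ with $c_{0}\geq 0$.

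The only place where care is needed, and the structural point where the proof differs from Corollary~\ref{cor2.1}, is the conclusion $u=0$: here the fact that the only constant function in $H_{0}^{1}(M)$ is zero replaces the zero-mean normalization used in the closed-manifold case. Everything else is bookkeeping, and the essential analytic work has already been absorbed into Proposition~\ref{prop4.5}.
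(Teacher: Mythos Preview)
Your proposal is correct and follows exactly the approach the paper indicates: the paper does not write out a separate proof of Corollary~\ref{cor4.3} but simply states that ``with Proposition~\ref{prop4.5}, we can derive Corollary~\ref{cor4.3} exactly in the same way as the proof of Corollary~\ref{cor2.1}.'' You have faithfully reproduced that argument, including the one genuine change you flag --- that $u=0$ now follows from $u\in H_{0}^{1}(M)$ being constant (with $\partial M\neq\emptyset$) rather than from the zero-mean condition.
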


With Proposition \ref{prop4.5}, we can derive Corollary \ref{cor4.3} exactly
in the same way as the proof of Corollary \ref{cor2.1}.

\begin{theorem}
\label{thm4.5}Let $\left( M^{2},g\right) $ be a smooth compact Riemann
surface with nonempty boundary. Assume $\alpha >0$, $m_{i}>0$, $%
m_{i}\rightarrow \infty $, $u_{i}\in H_{0}^{1}\left( M\right) $ and%
\begin{equation}
\log \int_{M}e^{2m_{i}u_{i}}d\mu \geq \alpha m_{i}^{2}.  \label{eq4.80}
\end{equation}%
We also assume $u_{i}\rightharpoonup u$ weakly in $H_{0}^{1}\left( M\right) $%
, $\left\vert \nabla u_{i}\right\vert ^{2}d\mu \rightarrow \left\vert \nabla
u\right\vert ^{2}d\mu +\sigma $ as measure and%
\begin{equation}
\frac{e^{2m_{i}u_{i}}}{\int_{M}e^{2m_{i}u_{i}}d\mu }\rightarrow \nu
\label{eq4.81}
\end{equation}%
as measure. Let%
\begin{equation}
\left\{ x\in M:\sigma \left( \left\{ x\right\} \right) \geq 4\pi \alpha
\right\} =\left\{ x_{1},\cdots ,x_{N}\right\} ,  \label{eq4.82}
\end{equation}%
then%
\begin{equation}
\nu =\sum_{i=1}^{N}\nu _{i}\delta _{x_{i}},  \label{eq4.83}
\end{equation}%
here $\nu _{i}\geq 0$ and $\sum_{i=1}^{N}\nu _{i}=1$.
\end{theorem}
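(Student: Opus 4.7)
The plan is to follow, step by step, the template of Theorem \ref{thm2.1}, using Proposition \ref{prop4.5} in place of Proposition \ref{prop2.1}. The zero-boundary condition enters only implicitly, through the fact that Proposition \ref{prop4.5} is already tailored to $H_0^1(M)$. Note that $4\pi\alpha$ is exactly $\alpha_{1,2}^{-1}\alpha$ in the notation of Section \ref{sec2}, so structurally this is the $n=2$, boundary-case analogue of Theorem \ref{thm2.1}.

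The main step is the localization claim: if $x\in M$ satisfies $\sigma(\{x\})<4\pi\alpha$, then $\nu(B_r(x))=0$ for some $r>0$. First pick $p$ with
\[
\frac{1}{4\pi\alpha}<p<\frac{1}{\sigma(\{x\})},
\]
which is possible precisely because $\sigma(\{x\})<4\pi\alpha$. Proposition \ref{prop4.5} then produces $r>0$ and a constant $c$, independent of $i$, with
\[
\int_{B_r(x)}e^{4\pi p\,u_i^2}\,d\mu\le c.
\]
Young's inequality $2m_iu_i\le 4\pi p\,u_i^2+\dfrac{m_i^2}{4\pi p}$ gives
\[
\int_{B_r(x)}e^{2m_iu_i}\,d\mu\le c\exp\left(\frac{m_i^2}{4\pi p}\right),
\]
and dividing by the hypothesized lower bound $\int_M e^{2m_iu_i}\,d\mu\ge e^{\alpha m_i^2}$ yields
\[
\frac{\int_{B_r(x)}e^{2m_iu_i}\,d\mu}{\int_M e^{2m_iu_i}\,d\mu}\le c\exp\left(\left(\frac{1}{4\pi p}-\alpha\right)m_i^2\right)\longrightarrow 0,
\]
since $\alpha>\dfrac{1}{4\pi p}$. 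The weak-$*$ convergence $\dfrac{e^{2m_iu_i}}{\int_M e^{2m_iu_i}d\mu}\to\nu$, applied to the open set $B_r(x)$ via the Portmanteau inequality, then forces $\nu(B_r(x))=0$.

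Once the localization is in hand, I would observe that $\sigma(M)$ is finite: it equals $\lim_i\|\nabla u_i\|_{L^2}^2-\|\nabla u\|_{L^2}^2$, and both terms are bounded by the weak convergence in $H_0^1(M)$. Hence the atomic set $\{x\in M:\sigma(\{x\})\ge 4\pi\alpha\}$ consists of at most $\sigma(M)/(4\pi\alpha)$ points, say $x_1,\ldots,x_N$, and a standard compact covering argument combined with the localization yields $\nu(M\setminus\{x_1,\ldots,x_N\})=0$. Therefore $\nu=\sum_{i=1}^N\nu_i\delta_{x_i}$ with $\nu_i\ge 0$, and $\sum_i\nu_i=\nu(M)=1$ because $\nu$ is a weak-$*$ limit of probability measures on the compact space $M$.

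I do not expect any serious obstacle: every step mirrors the closed-manifold argument of Theorem \ref{thm2.1}. The only subtlety worth flagging is that the cutoff $\varphi v_i$ used inside Proposition \ref{prop4.5} must be admissible for the boundary version of Moser-Trudinger, and this is automatic because $\varphi v_i\in H_0^1(M)$ whenever $\varphi\in C^\infty(M)$ and $v_i\in H_0^1(M)$; this is also why no mean-value normalization of $u_i$ is needed in the present setting.
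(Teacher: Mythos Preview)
Your proposal is correct and is exactly the approach the paper indicates: the paper does not write out a separate proof of Theorem \ref{thm4.5} but simply states that it follows from Proposition \ref{prop4.5} by the same arguments as in the proof of Theorem \ref{thm2.1}, which is precisely the template you have spelled out. Your added remarks on the finiteness of the atomic set and on $\sum_i\nu_i=1$ are correct and fill in details the paper leaves implicit.
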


Theorem \ref{thm4.5} follows from Proposition \ref{prop4.5} by the same
arguments as in the proof of Theorem \ref{thm2.1}.

\subsubsection{Functions with no boundary conditions}

Let $H^{1}\left( M\right) =W^{1,2}\left( M\right) $. It follows from \cite%
{CY1} that for every $u\in H^{1}\left( M\right) \backslash \left\{ 0\right\} 
$ with $\overline{u}=0$, we have%
\begin{equation}
\int_{M}e^{2\pi \frac{u^{2}}{\left\Vert \nabla u\right\Vert _{L^{2}}^{2}}%
}d\mu \leq c\left( M,g\right) .  \label{eq4.84}
\end{equation}%
As a consequence,%
\begin{equation}
\log \int_{M}e^{2u}d\mu \leq \frac{1}{2\pi }\left\Vert \nabla u\right\Vert
_{L^{2}}^{2}+c_{1}\left( M,g\right) .  \label{eq4.85}
\end{equation}

\begin{lemma}
\label{lem4.3}For any $u\in H^{1}\left( M\right) $ and $a>0$, we have%
\begin{equation}
\int_{M}e^{au^{2}}d\mu <\infty .  \label{eq4.86}
\end{equation}
\end{lemma}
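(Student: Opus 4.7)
The plan is to mirror the proof of Lemma 2.1 (and the companion Lemma 4.2 for $H_0^1$), using the Moser--Trudinger inequality (4.84) for zero-mean functions in place of (1.4). The key observation is that (4.84) controls the exponential integral of $u^2$ with coefficient $2\pi/\|\nabla u\|_{L^2}^2$, so if we can split off a summand with arbitrarily small gradient norm, its contribution will absorb any prescribed constant $a > 0$.

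First I would reduce to the case that $u$ is unbounded; otherwise (4.86) is trivial since $M$ has finite measure. Fix a small parameter $\varepsilon > 0$ to be chosen. Since $C^\infty(M)$ is dense in $H^1(M)$, I can pick $v \in C^\infty(M)$ with $\|u - v\|_{H^1(M)} < \varepsilon$, and write $w = u - v$, so that
\begin{equation*}
u = v + \overline{w} + (w - \overline{w}),
\end{equation*}
where $w - \overline{w}$ has zero mean and $\|\nabla(w - \overline{w})\|_{L^2} = \|\nabla w\|_{L^2} < \varepsilon$. The triangle inequality gives $|u| \leq \|v\|_{L^\infty} + |\overline{w}| + |w - \overline{w}|$, whence
\begin{equation*}
u^2 \leq 2(\|v\|_{L^\infty} + |\overline{w}|)^2 + 2(w - \overline{w})^2,
\end{equation*}
and therefore
\begin{equation*}
e^{au^2} \leq e^{2a(\|v\|_{L^\infty} + |\overline{w}|)^2} \cdot e^{2a(w - \overline{w})^2}.
\end{equation*}

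The first factor is a constant depending on $u$, $v$, and $a$ but not on position. For the second factor, I apply (4.84) to $w - \overline{w}$: when $\varepsilon$ is chosen small enough that $2a \, \|\nabla w\|_{L^2}^2 < 2\pi$, i.e. $\varepsilon^2 < \pi/a$, we have
\begin{equation*}
2a(w - \overline{w})^2 \leq 2\pi \frac{(w - \overline{w})^2}{\|\nabla(w - \overline{w})\|_{L^2}^2},
\end{equation*}
so integrating yields $\int_M e^{2a(w - \overline{w})^2}\, d\mu \leq c(M, g)$. Multiplying by the constant first factor gives (4.86). The only step requiring care is verifying that the density of $C^\infty(M)$ in $H^1(M)$ is available on a compact surface with boundary, but this is standard (one may extend across the boundary or use a partition of unity with local mollification), so no genuine obstacle arises; the entire argument is a direct transcription of Lemma 2.1 with (4.84) replacing (1.4).
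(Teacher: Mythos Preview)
Your proof is correct and follows exactly the approach the paper indicates: the paper simply states that ``using (4.84), the same argument as the proof of Lemma \ref{lem2.1} implies Lemma \ref{lem4.3},'' and your write-up is precisely that transcription, splitting $u = v + \overline{w} + (w-\overline{w})$ with $v$ smooth and $\|\nabla w\|_{L^2}$ small, then applying (4.84) to the zero-mean piece $w-\overline{w}$.
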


Using (\ref{eq4.84}), the same argument as the proof of Lemma \ref{lem2.1}
implies Lemma \ref{lem4.3}.

\begin{proposition}
\label{prop4.6}Let $\left( M^{2},g\right) $ be a smooth compact Riemann
surface with nonempty boundary. Assume $u_{i}\in H^{1}\left( M\right) $ such
that $\overline{u_{i}}=0$, $u_{i}\rightharpoonup u$ weakly in $H^{1}\left(
M\right) $ and%
\begin{equation}
\left\vert \nabla u_{i}\right\vert ^{2}d\mu \rightarrow \left\vert \nabla
u\right\vert ^{2}d\mu +\sigma  \label{eq4.87}
\end{equation}%
as measure. Given $x\in M$ and $p\in \mathbb{R}$ such that $0<p<\frac{1}{%
\sigma \left( \left\{ x\right\} \right) }$.

\begin{enumerate}
\item If $x\in M\backslash \partial M$ , then for some $r>0$,%
\begin{equation}
\sup_{i}\int_{B_{r}\left( x\right) }e^{4\pi pu_{i}^{2}}d\mu <\infty .
\label{eq4.88}
\end{equation}

\item If $x\in \partial M$, then for some $r>0$,%
\begin{equation}
\sup_{i}\int_{B_{r}\left( x\right) }e^{2\pi pu_{i}^{2}}d\mu <\infty .
\label{eq4.89}
\end{equation}
\end{enumerate}
\end{proposition}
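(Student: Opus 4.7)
The plan is to adapt the argument from Proposition \ref{prop4.5} with one essential modification: the choice of which Moser--Trudinger inequality to invoke depends on whether $x$ lies in the interior or on the boundary of $M$. First, I would fix $p_1 \in \left(p, \sigma(\{x\})^{-1}\right)$ and select $\varepsilon > 0$ small enough so that $(1+\varepsilon)\sigma(\{x\}) < 1/p_1$ and $(1+\varepsilon)p < p_1$. Setting $v_i = u_i - u$, the weak convergence gives $v_i \rightharpoonup 0$ in $H^1(M)$ and $v_i \to 0$ in $L^2(M)$. A cutoff computation identical to the one in Proposition \ref{prop4.5} would yield, for any $\varphi \in C^{\infty}(M)$,
\begin{equation*}
\limsup_{i\to\infty}\|\nabla(\varphi v_i)\|_{L^2}^2 \le (1+\varepsilon)\int_M \varphi^2\,d\sigma + c_1(\varepsilon)\int_M \varphi^2 |\nabla u|^2\,d\mu.
\end{equation*}

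For the interior case $x \in M \setminus \partial M$, I would choose $\varphi \in C_c^{\infty}(M \setminus \partial M)$ with $\varphi|_{B_r(x)} = 1$ for some small $r > 0$ and with the right-hand side above strictly less than $1/p_1$. Since $\varphi v_i \in H_0^1(M)$, the zero boundary Moser--Trudinger inequality (\ref{eq4.68}) applies with its sharp constant $4\pi$, producing a uniform bound on $\int_{B_r(x)} e^{4\pi p_1 v_i^2}\,d\mu$. Then the elementary inequality $u_i^2 \le (1+\varepsilon)v_i^2 + c(\varepsilon)u^2$, combined with Lemma \ref{lem4.3} and H\"older (using $p_1/(1+\varepsilon) > p$), converts this into a uniform bound on $\int_{B_r(x)} e^{4\pi p u_i^2}\,d\mu$.

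For the boundary case $x \in \partial M$, a cutoff $\varphi$ with $\varphi|_{B_r(x)} = 1$ cannot be chosen to vanish near $\partial M$, so $\varphi v_i \notin H_0^1(M)$ in general. Instead I would work with $\varphi v_i - \overline{\varphi v_i}$, which has mean zero, and apply the mean-zero Moser--Trudinger inequality (\ref{eq4.84}), whose sharp constant is only $2\pi$. Since $v_i \to 0$ in $L^2$, we have $\overline{\varphi v_i} \to 0$, so the decomposition
\begin{equation*}
u_i = \bigl(v_i - \overline{\varphi v_i}\bigr) + u + \overline{\varphi v_i}
\end{equation*}
together with $e^{2\pi u_i^2} \le e^{2\pi(1+\varepsilon)(v_i-\overline{\varphi v_i})^2}\cdot e^{c(\varepsilon)u^2}\cdot e^{c(\varepsilon)|\overline{\varphi v_i}|^2}$, Lemma \ref{lem4.3}, and H\"older, yields the uniform bound on $\int_{B_r(x)} e^{2\pi p u_i^2}\,d\mu$.

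The main conceptual point, rather than a serious obstacle, is that the factor of two between the interior constant $4\pi$ and the boundary constant $2\pi$ is intrinsic: a boundary blow-up only sees ``half'' of the bubble, so only the weaker $H^1$ inequality is available there. The bookkeeping with the mean $\overline{\varphi v_i}$ is the one extra ingredient needed relative to Proposition \ref{prop4.5}, but it is harmless because $\overline{\varphi v_i} \to 0$ and therefore contributes only a factor tending to $1$ in the H\"older estimate.
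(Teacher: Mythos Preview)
Your proposal is correct and follows essentially the same approach as the paper: the interior case uses a compactly supported cutoff so that $\varphi v_i\in H_0^1(M)$ and the $4\pi$ inequality (\ref{eq4.68}) applies, while the boundary case falls back on the mean-zero inequality (\ref{eq4.84}) via the subtraction of $\overline{\varphi v_i}$, exactly as in the proof of Proposition~\ref{prop2.1}. In fact you have written out the boundary case in more detail than the paper, which simply refers the reader back to that earlier argument.
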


\begin{proof}
Fix $p_{1}\in \left( p,\frac{1}{\sigma \left( \left\{ x\right\} \right) }%
\right) $, then%
\begin{equation}
\sigma \left( \left\{ x\right\} \right) <\frac{1}{p_{1}}.  \label{eq4.90}
\end{equation}%
We can find a $\varepsilon >0$ such that%
\begin{equation}
\left( 1+\varepsilon \right) \sigma \left( \left\{ x\right\} \right) <\frac{1%
}{p_{1}}  \label{eq4.91}
\end{equation}%
and%
\begin{equation}
\left( 1+\varepsilon \right) p<p_{1}.  \label{eq4.92}
\end{equation}%
Let $v_{i}=u_{i}-u$, then $v_{i}\rightharpoonup 0$ weakly in $H^{1}\left(
M\right) $, $v_{i}\rightarrow 0$ in $L^{2}\left( M\right) $. For any $%
\varphi \in C^{\infty }\left( M\right) $, we have%
\begin{eqnarray*}
&&\left\Vert \nabla \left( \varphi v_{i}\right) \right\Vert _{L^{2}}^{2} \\
&\leq &\left( \left\Vert \varphi \nabla v_{i}\right\Vert _{L^{2}}+\left\Vert
v_{i}\nabla \varphi \right\Vert _{L^{2}}\right) ^{2} \\
&\leq &\left( \left\Vert \varphi \nabla u_{i}\right\Vert _{L^{2}}+\left\Vert
\varphi \nabla u\right\Vert _{L^{2}}+\left\Vert v_{i}\nabla \varphi
\right\Vert _{L^{2}}\right) ^{2} \\
&\leq &\left( 1+\varepsilon \right) \left\Vert \varphi \nabla
u_{i}\right\Vert _{L^{2}}^{2}+c\left( \varepsilon \right) \left\Vert \varphi
\nabla u\right\Vert _{L^{n}}^{n}+c\left( \varepsilon \right) \left\Vert
v_{i}\nabla \varphi \right\Vert _{L^{n}}^{n}.
\end{eqnarray*}%
It follows that%
\begin{eqnarray*}
&&\lim \sup_{i\rightarrow \infty }\left\Vert \nabla \left( \varphi
v_{i}\right) \right\Vert _{L^{2}}^{2} \\
&\leq &\left( 1+\varepsilon \right) \left( \int_{M}\varphi ^{2}d\sigma
+\int_{M}\varphi ^{2}\left\vert \nabla u\right\vert ^{2}d\mu \right)
+c\left( \varepsilon \right) \left\Vert \varphi \nabla u\right\Vert
_{L^{2}}^{2} \\
&=&\left( 1+\varepsilon \right) \int_{M}\varphi ^{2}d\sigma +c_{1}\left(
\varepsilon \right) \int_{M}\varphi ^{2}\left\vert \nabla u\right\vert
^{2}d\mu .
\end{eqnarray*}

If $x\in M\backslash \partial M$, then we can find a $\varphi \in
C_{c}^{\infty }\left( M\right) $ such that $\left. \varphi \right\vert
_{B_{r}\left( x\right) }=1$ for some $r>0$ and%
\begin{equation*}
\left( 1+\varepsilon \right) \int_{M}\varphi ^{2}d\sigma +c_{1}\left(
\varepsilon \right) \int_{M}\varphi ^{2}\left\vert \nabla u\right\vert
^{2}d\mu <\frac{1}{p_{1}}.
\end{equation*}%
Hence for $i$ large enough, we have%
\begin{equation*}
\left\Vert \nabla \left( \varphi v_{i}\right) \right\Vert _{L^{2}}^{2}<\frac{%
1}{p_{1}}.
\end{equation*}%
Note that $\varphi v_{i}\in H_{0}^{1}\left( M\right) $. We have%
\begin{eqnarray*}
\int_{B_{r}\left( x\right) }e^{4\pi p_{1}v_{i}^{2}}d\mu &\leq
&\int_{M}e^{4\pi p_{1}\left( \varphi v_{i}\right) ^{2}}d\mu \\
&\leq &\int_{M}e^{4\pi \frac{\left( \varphi v_{i}\right) ^{2}}{\left\Vert
\nabla \left( \varphi v_{i}\right) \right\Vert _{L^{2}}^{2}}}d\mu \\
&\leq &c\left( M,g\right) .
\end{eqnarray*}%
Next we observe that%
\begin{equation*}
u_{i}^{2}=\left( v_{i}+u\right) ^{2}\leq \left( 1+\varepsilon \right)
v_{i}^{2}+c\left( \varepsilon \right) u^{2},
\end{equation*}%
hence%
\begin{equation*}
e^{4\pi u_{i}^{2}}\leq e^{4\pi \left( 1+\varepsilon \right)
v_{i}^{2}}e^{c\left( \varepsilon \right) u^{2}}.
\end{equation*}%
Since $e^{4\pi \left( 1+\varepsilon \right) v_{i}^{2}}$ is bounded in $L^{%
\frac{p_{1}}{1+\varepsilon }}\left( B_{r}\left( x\right) \right) $, $%
e^{c\left( \varepsilon \right) u^{2}}\in L^{q}\left( B_{r}\left( x\right)
\right) $ for any $0<q<\infty $ (by Lemma \ref{lem4.2}), and $\frac{p_{1}}{%
1+\varepsilon }>p$, it follows from Holder inequality that $e^{4\pi
u_{i}^{2}}$ is bounded in $L^{p}\left( B_{r}\left( x\right) \right) $.

If $x\in \partial M$, then (\ref{eq4.84}) and similar arguments as in the
proof of Proposition \ref{prop2.1} tells us%
\begin{equation*}
\sup_{i}\int_{B_{r}\left( x\right) }e^{2\pi pu_{i}^{2}}d\mu <\infty .
\end{equation*}
\end{proof}

\begin{corollary}
\label{cor4.4}Assume $u_{i}\in H^{1}\left( M\right) $ such that $\overline{%
u_{i}}=0$ and $\left\Vert \nabla u_{i}\right\Vert _{L^{2}}\leq 1$. We also
assume $u_{i}\rightharpoonup u$ weakly in $H^{1}\left( M\right) $ and%
\begin{equation}
\left\vert \nabla u_{i}\right\vert ^{2}d\mu \rightarrow \left\vert \nabla
u\right\vert ^{2}d\mu +\sigma  \label{eq4.93}
\end{equation}%
as measure. Let $K$ be a compact subset of $M$ and%
\begin{eqnarray}
\kappa _{0} &=&\max_{x\in K\backslash \partial M}\sigma \left( \left\{
x\right\} \right) ;  \label{eq4.94} \\
\kappa _{1} &=&\max_{x\in K\cap \partial M}\sigma \left( \left\{ x\right\}
\right) ;  \label{eq4.95}
\end{eqnarray}

\begin{enumerate}
\item If $\kappa _{1}<1$, then for any $1\leq p<\min \left\{ \frac{2}{\kappa
_{0}},\frac{1}{\kappa _{1}}\right\} $, 
\begin{equation}
\sup_{i}\int_{K}e^{2\pi pu_{i}^{2}}d\mu <\infty .  \label{eq4.96}
\end{equation}

\item If $\kappa _{1}=1$, then $\sigma =\delta _{x_{0}}$ for some $x_{0}\in
\partial M$, $u=0$ and after passing to a subsequence,%
\begin{equation}
e^{2\pi u_{i}^{2}}\rightarrow 1+c_{0}\delta _{x_{0}}  \label{eq4.97}
\end{equation}%
as measure for some $c_{0}\geq 0$.
\end{enumerate}
\end{corollary}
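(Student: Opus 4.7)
The plan is to mimic the proof of Corollary~\ref{cor2.1}, with Proposition~\ref{prop4.6} playing the role that Proposition~\ref{prop2.1} played there. The one genuinely new feature is the interior/boundary dichotomy built into Proposition~\ref{prop4.6}: at an interior point it controls $e^{4\pi p u_i^2}$ for $p<\sigma(\{x\})^{-1}$, whereas at a boundary point it only controls $e^{2\pi p u_i^2}$ in the same range. This factor-of-two loss at the boundary is precisely what forces the asymmetric threshold $\min\{2/\kappa_0,\,1/\kappa_1\}$.

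For part~(1), fix $p$ with $1\leq p<\min\{2/\kappa_0,1/\kappa_1\}$. I would cover $K$ ball by ball. At an interior point $x\in K\setminus\partial M$, the choice $q=p/2$ satisfies $q<1/\kappa_0\leq\sigma(\{x\})^{-1}$, so Proposition~\ref{prop4.6}(1) produces an $r_x>0$ with $\sup_i\int_{B_{r_x}(x)}e^{4\pi q u_i^2}d\mu=\sup_i\int_{B_{r_x}(x)}e^{2\pi p u_i^2}d\mu<\infty$. At a boundary point $x\in K\cap\partial M$, the choice $q=p$ satisfies $q<1/\kappa_1\leq\sigma(\{x\})^{-1}$, and Proposition~\ref{prop4.6}(2) gives the same bound directly. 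Compactness of $K$ reduces the resulting open cover to a finite one, and summing yields~(\ref{eq4.96}).

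For part~(2), the hypothesis $\kappa_1=1$ means $\sigma(\{x_0\})=1$ for some $x_0\in K\cap\partial M$. Weak convergence together with $\|\nabla u_i\|_{L^2}\leq1$ passes to $\int_M|\nabla u|^2d\mu+\sigma(M)\leq1$, which forces $u$ to be constant and $\sigma=\delta_{x_0}$; the normalization $\overline{u}=0$ then gives $u=0$. A further subsequence extraction using Rellich compactness arranges $u_i\to0$ a.e. Fix any $r>0$ and apply part~(1) to the compact set $M\setminus B_r(x_0)$: both of its associated $\kappa_0,\kappa_1$ vanish, so every finite $p$ is admissible, producing $L^q$ bounds on $e^{2\pi u_i^2}$ for every $q<\infty$. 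Uniform integrability combined with the pointwise convergence $u_i\to0$ then gives $e^{2\pi u_i^2}\to1$ in $L^1(M\setminus B_r(x_0))$. The inequality~(\ref{eq4.84}) together with $\|\nabla u_i\|_{L^2}\leq1$ supplies the global mass bound $\int_M e^{2\pi u_i^2}d\mu\leq c(M,g)$, so $\{e^{2\pi u_i^2}d\mu\}$ is weak-$\ast$ precompact among positive Radon measures; any limit agrees with $d\mu$ off $\{x_0\}$, hence equals $d\mu+c_0\delta_{x_0}$ for some $c_0\geq0$.

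The whole argument is really a careful bookkeeping of interior versus boundary Moser--Trudinger constants; once this accounting is made, both parts reduce to the template already developed for Corollary~\ref{cor2.1} and Corollary~\ref{cor4.3}, and no essentially new obstacle arises.
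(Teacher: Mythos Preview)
Your proposal is correct and follows essentially the same approach as the paper. The paper's proof of part~(1) is identical in structure to yours---split into interior points (use $q=p/2$ in Proposition~\ref{prop4.6}(1)) and boundary points (use $q=p$ in Proposition~\ref{prop4.6}(2)), then cover---and for part~(2) the paper simply writes ``can be handled the same way as in the proof of Corollary~\ref{cor2.1},'' which is exactly the argument you have spelled out in detail.
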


\begin{proof}
First assume $\kappa _{1}<1$. We claim for any $x\in K$, there exists a $%
r_{x}>0$ such that%
\begin{equation*}
\sup_{i}\int_{B_{r_{x}}\left( x\right) }e^{2\pi pu_{i}^{2}}d\mu <\infty .
\end{equation*}%
Once this claim is proved, we can deduce%
\begin{equation*}
\sup_{i}\int_{K}e^{2\pi pu_{i}^{2}}d\mu <\infty
\end{equation*}%
by the covering argument in the proof of Corollary \ref{cor2.1}.

If $x\in K\backslash \partial M$, then%
\begin{equation*}
p<\frac{2}{\kappa _{0}}\leq \frac{2}{\sigma \left( \left\{ x\right\} \right) 
}.
\end{equation*}%
Hence $\frac{p}{2}<\frac{1}{\sigma \left( \left\{ x\right\} \right) }$. It
follows from Proposition \ref{prop4.6} that for some $r>0$,%
\begin{equation*}
\sup_{i}\int_{B_{r}\left( x\right) }e^{2\pi pu_{i}^{2}}d\mu
=\sup_{i}\int_{B_{r}\left( x\right) }e^{4\pi \cdot \frac{p}{2}u_{i}^{2}}d\mu
<\infty .
\end{equation*}

If $x\in K\cap \partial M$, then%
\begin{equation*}
p<\frac{1}{\kappa _{1}}\leq \frac{1}{\sigma \left( \left\{ x\right\} \right) 
}.
\end{equation*}%
It follows from Proposition \ref{prop4.6} again that for some $r>0$,%
\begin{equation*}
\sup_{i}\int_{B_{r}\left( x\right) }e^{2\pi pu_{i}^{2}}d\mu <\infty .
\end{equation*}

This proves the claim.

The case when $\kappa _{1}=1$ can be handled the same way as in the proof of
Corollary \ref{cor2.1}.
\end{proof}

\begin{theorem}
\label{thm4.6}Let $\left( M^{2},g\right) $ be a smooth compact Riemann
surface with nonempty boundary. Assume $\alpha >0$, $m_{i}>0$, $%
m_{i}\rightarrow \infty $, $u_{i}\in H^{1}\left( M\right) $ such that $%
\overline{u_{i}}=0$ and%
\begin{equation}
\log \int_{M}e^{2m_{i}u_{i}}d\mu \geq \alpha m_{i}^{2}.  \label{eq4.98}
\end{equation}%
We also assume $u_{i}\rightharpoonup u$ weakly in $H^{1}\left( M\right) $, $%
\left\vert \nabla u_{i}\right\vert ^{2}d\mu \rightarrow \left\vert \nabla
u\right\vert ^{2}d\mu +\sigma $ as measure and%
\begin{equation}
\frac{e^{2m_{i}u_{i}}}{\int_{M}e^{2m_{i}u_{i}}d\mu }\rightarrow \nu
\label{eq4.99}
\end{equation}%
as measure. Let%
\begin{eqnarray}
&&\left\{ x\in M\backslash \partial M:\sigma \left( \left\{ x\right\}
\right) \geq 4\pi \alpha \right\} \cup \left\{ x\in \partial M:\sigma \left(
\left\{ x\right\} \right) \geq 2\pi \alpha \right\}  \label{eq4.100} \\
&=&\left\{ x_{1},\cdots ,x_{N}\right\} ,  \notag
\end{eqnarray}%
then%
\begin{equation}
\nu =\sum_{i=1}^{N}\nu _{i}\delta _{x_{i}},  \label{eq4.101}
\end{equation}%
here $\nu _{i}\geq 0$ and $\sum_{i=1}^{N}\nu _{i}=1$.
\end{theorem}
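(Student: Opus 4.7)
My plan is to adapt the argument used for Theorem \ref{thm2.1} (and its boundary analogue Theorem \ref{thm4.5}), with the only novelty being that Proposition \ref{prop4.6} supplies two different exponential integrability thresholds — effective constant $4\pi$ at interior points and $2\pi$ at boundary points — and these propagate through Young's inequality to produce the dichotomy in (\ref{eq4.100}). Concretely, I will show that every $x \in M$ outside the asserted support set admits a radius $r > 0$ with $\nu(B_r(x)) = 0$, which together with $\nu(M) = 1$ and the non-negativity of $\nu$ will immediately give (\ref{eq4.101}).

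For an interior point $x \in M \setminus \partial M$ with $\sigma(\{x\}) < 4\pi\alpha$, I would choose $p$ satisfying $\frac{1}{4\pi\alpha} < p < \frac{1}{\sigma(\{x\})}$. Part (1) of Proposition \ref{prop4.6} then yields $r > 0$ with
\[
\sup_i \int_{B_r(x)} e^{4\pi p u_i^2} d\mu < \infty.
\]
Young's inequality gives $2 m_i u_i \leq 4\pi p u_i^2 + \frac{m_i^2}{4\pi p}$, hence
\[
\frac{\int_{B_r(x)} e^{2 m_i u_i} d\mu}{\int_M e^{2 m_i u_i} d\mu} \leq C \exp\!\left( \left( \tfrac{1}{4\pi p} - \alpha \right) m_i^2 \right) \longrightarrow 0,
\]
so $\nu(B_r(x)) \leq \liminf_i \frac{\int_{B_r(x)} e^{2 m_i u_i} d\mu}{\int_M e^{2 m_i u_i} d\mu} = 0$.

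For a boundary point $x \in \partial M$ with $\sigma(\{x\}) < 2\pi\alpha$, I would repeat the argument using part (2) of Proposition \ref{prop4.6}: pick $p$ with $\frac{1}{2\pi\alpha} < p < \frac{1}{\sigma(\{x\})}$, obtain $\sup_i \int_{B_r(x)} e^{2\pi p u_i^2} d\mu < \infty$, and invoke Young's inequality in the form $2 m_i u_i \leq 2\pi p u_i^2 + \frac{m_i^2}{2\pi p}$ to deduce $\nu(B_r(x)) = 0$ by the same ratio estimate. Combining the two cases, the support of $\nu$ is contained in the finite set in (\ref{eq4.100}), proving the theorem.

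The main obstacle is not analytic but bookkeeping: one must correctly pair each of the two Moser--Trudinger constants from Proposition \ref{prop4.6} with the corresponding Young's inequality exponent, so that the atomic thresholds $4\pi\alpha$ and $2\pi\alpha$ emerge in the statement of (\ref{eq4.100}). No symmetrization or new compactness argument beyond those already established in Section \ref{sec2} is required.
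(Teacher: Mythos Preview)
Your proposal is correct and follows essentially the same argument as the paper's own proof: for interior points with $\sigma(\{x\})<4\pi\alpha$ and boundary points with $\sigma(\{x\})<2\pi\alpha$, the paper likewise picks $p$ in the indicated ranges, invokes the appropriate case of Proposition~\ref{prop4.6}, applies Young's inequality with the matching constant ($4\pi$ or $2\pi$), and concludes $\nu(B_r(x))=0$ from the exponential decay of the ratio. The only difference is cosmetic---the paper writes out the interior case in full and then says ``similar argument shows'' for the boundary case, whereas you spell out both.
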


\begin{proof}
Assume $x\in M\backslash \partial M$ with $\sigma \left( \left\{ x\right\}
\right) <4\pi \alpha $, then we claim that for some $r>0$, $\nu \left(
B_{r}\left( x\right) \right) =0$. Indeed we fix $p$ such that%
\begin{equation}
\frac{1}{4\pi \alpha }<p<\frac{1}{\sigma \left( \left\{ x\right\} \right) },
\label{eq4.102}
\end{equation}%
it follows from Proposition \ref{prop4.6} that for some $r>0$,%
\begin{equation}
\int_{B_{r}\left( x\right) }e^{4\pi pu_{i}^{2}}d\mu \leq c,  \label{eq4.103}
\end{equation}%
here $c$ is a positive constant independent of $i$. We have%
\begin{equation}
2m_{i}u_{i}\leq 4\pi pu_{i}^{2}+\frac{m_{i}^{2}}{4\pi p},  \label{eq4.104}
\end{equation}%
hence%
\begin{equation*}
\int_{B_{r}\left( x\right) }e^{2m_{i}u_{i}}d\mu \leq ce^{\frac{m_{i}^{2}}{%
4\pi p}}.
\end{equation*}%
It follows that%
\begin{equation*}
\frac{\int_{B_{r}\left( x\right) }e^{2m_{i}u_{i}}d\mu }{%
\int_{M}e^{2m_{i}u_{i}}d\mu }\leq ce^{\left( \frac{1}{4\pi p}-\alpha \right)
m_{i}^{2}}.
\end{equation*}%
In particular,%
\begin{equation*}
\nu \left( B_{r}\left( x\right) \right) \leq \lim \inf_{i\rightarrow \infty }%
\frac{\int_{B_{r}\left( x\right) }e^{2m_{i}u_{i}}d\mu }{%
\int_{M}e^{2m_{i}u_{i}}d\mu }=0.
\end{equation*}%
We get $\nu \left( B_{r}\left( x\right) \right) =0$.

If $x\in \partial M$ with $\sigma \left( \left\{ x\right\} \right) <2\pi
\alpha $, then similar argument shows for some $r>0$, $\nu \left(
B_{r}\left( x\right) \right) =0$.

Clearly these imply%
\begin{equation}
\nu \left( M\backslash \left\{ x_{1},\cdots ,x_{N}\right\} \right) =0.
\label{eq4.105}
\end{equation}%
Hence $\nu =\sum_{i=1}^{N}\nu _{i}\delta _{x_{i}},$ with $\nu _{i}\geq 0$
and $\sum_{i=1}^{N}\nu _{i}=1$.
\end{proof}

\end{document}